\documentclass{article}
\usepackage{amsmath,amssymb,amsthm}
\usepackage{geometry}
\usepackage{tikz}
\usepackage{cancel}
\usepackage{hyperref}
\newtheorem{theorem}{Theorem}[section]
\usepackage{xcolor}
\usepackage{mathrsfs}
\newtheorem{lemma}[theorem]{Lemma}
\newtheorem{proposition}[theorem]{Proposition}
\newtheorem{corollary}[theorem]{Corollary}
\newtheorem{definition}[theorem]{Definition}
\newtheorem{remark}[theorem]{Remark}
\newtheorem{example}[theorem]{Example}
\numberwithin{equation}{section}
\numberwithin{theorem}{section}
\newcommand{\shalmalirmk}[1]{\textcolor{red}{\textsf{[Shalmali: #1]}}}
\usepackage{comment}
\usepackage{authblk}
\begin{document}

\title{Nonlinear elliptic Dirichlet boundary value problems on time scales}

\author[1]{Shalmali Bandyopadhyay}
\author[2]{F. Ay\c{c}a \c{C}etinkaya}
\author[3]{Tom Cuchta}
\affil[1]{The University of Tennessee at Martin, email: \texttt{sbandyo5@utm.edu}}
\affil[2]{The University of Tennessee at Chattanooga, email: \texttt{fatmaayca-cetinkaya@utc.edu}}
\affil[3]{Marshall University, email: \texttt{cuchta@marshall.edu}}
\setcounter{Maxaffil}{0}
\renewcommand\Affilfont{\itshape\small}
\date{}

\vspace{5mm}

\maketitle
\begin{abstract}
We establish existence and uniqueness results for nonlinear elliptic Dirichlet boundary value problems on n-dimensional time scale domains. Time scales provide a unified framework that encompasses continuous, discrete, and hybrid settings. Under a Lipschitz condition on the nonlinearity bounded by the first eigenvalue, we prove existence and uniqueness using the contraction mapping theorem. Under a weaker one-sided growth condition, we establish existence using the Leray--Schauder fixed point theorem. To apply these functional analytic methods, we reformulate the problem as an operator equation, which requires developing the spectral theory for the Dirichlet Laplacian with mixed nabla-delta derivatives. We establish self-adjointness, positivity, and completeness of eigenfunctions, and the product eigenfunctions form a complete orthonormal basis in the n-dimensional setting.
\end{abstract}
\vspace{3mm}
\noindent\textbf{Keywords:} partial dynamic equations, elliptic problem, Dirichlet boundary conditions, eigenfunction expansion, time scale

\noindent\textbf{Mathematics Subject Classification (2020):} 34N05, 35J65, 35J61, 35J15.

\section{Introduction}\label{sec:intro}

We consider the nonlinear elliptic Dirichlet boundary value problem
\begin{equation}\label{eq:Dirichlet}
\begin{cases}
-\Delta_{\mathbb{T}} u + f(x, u(x)) = 0 & \text{in } \Omega,\\
u = 0 & \text{on } \partial\Omega,
\end{cases}
\end{equation}
where $f\colon \Omega \times \mathbb{R} \to \mathbb{R}$ is a nonlinearity, the time scale Laplacian is defined by $\Delta_{\mathbb{T}} u = \displaystyle\sum_{i=1}^{n} u^{\nabla_i\Delta_i}$, and the domain is $\Omega = \displaystyle\prod_{i=1}^{n} (a_i, b_i) \cap \mathbb{T}_i$, where each $\mathbb{T}_i$ is a time scale with $a_i, b_i \in \mathbb{T}_i$ satisfying $a_i < b_i$. For notational convenience, let $A_i = (a_i, b_i) \cap \mathbb{T}_i$ and $\partial A_i = \{a_i, b_i\}$. The boundary is given by $\partial\Omega = \overline{\Omega} \setminus
\Omega$; explicitly, a point $x \in \overline{\Omega}$ lies on
$\partial\Omega$ if and only if $x_i \in \{a_i, b_i\}$ for at least one
index $i$.
\begin{figure}[h]
\centering
\iffalse
\begin{tikzpicture}[scale=1.5]
% Draw the rectangle edges with colors (no arrows)
% Bottom edge: (a_1, b_1) × {a_2}
\draw[blue, very thick] (0,0) -- (4,0);
% Top edge: (a_1, b_1) × {b_2}
\draw[blue, very thick] (0,3) -- (4,3);
% Left edge: {a_1} × (a_2, b_2)
\draw[red, very thick] (0,0) -- (0,3);
% Right edge: {b_1} × (a_2, b_2)
\draw[red, very thick] (4,0) -- (4,3);

% Label corners
\node[below left] at (0,0) {$(a_1, a_2)$};
\node[below right] at (4,0) {$(b_1, a_2)$};
\node[above left] at (0,3) {$(a_1, b_2)$};
\node[above right] at (4,3) {$(b_1, b_2)$};

% Edge labels
\node[blue, below] at (2,0) {$(a_1, b_1) \times \{a_2\}$};
\node[blue, above] at (2,3) {$(a_1, b_1) \times \{b_2\}$};
\node[red, left] at (0,1.5) {$\{a_1\} \times (a_2, b_2)$};
\node[red, right] at (4,1.5) {$\{b_1\} \times (a_2, b_2)$};

% Interior region
\node at (2,1.5) {$\Omega = (a_1, b_1) \times (a_2, b_2)$};

% Legend
\node[blue] at (2, -0.8) {$A_1 \times \partial A_2 = (a_1, b_1) \times \{a_2, b_2\}$};
\node[red] at (2, -1.4) {$\partial A_1 \times A_2 = \{a_1, b_1\} \times (a_2, b_2)$};
\end{tikzpicture}
\fi
\begin{tikzpicture}[scale=2.5]
%\draw[->,gray!20] (0,0) -- (2,0);
%\draw[->,gray!20] (0,0) -- (0,2);
\filldraw[lightgray] (0,0) circle (1pt);
\filldraw[lightgray] (0,1/3) circle (1pt);
\filldraw[lightgray] (0,3/4) circle (1pt);
\filldraw[lightgray] (0,1) circle (1pt);
\draw[very thick,lightgray] (0,1.2)--(0,1.5);
\filldraw[lightgray] (0,1.7) circle (1pt);

\filldraw[lightgray] (0.5,0) circle (1pt);
\draw[very thick,lightgray] (0.8,0)--(1.1,0);
\filldraw[lightgray] (1.3,0) circle (1pt);
\filldraw[lightgray] (1.5,0) circle (1pt);
\filldraw[lightgray] (1.6,0) circle (1pt);
\filldraw[lightgray] (1.6,1/3) circle (1pt);
\filldraw[lightgray] (1.6,3/4) circle (1pt);
\filldraw[lightgray] (1.6,1) circle (1pt);
\draw[very thick,lightgray] (1.6,1.2)--(1.6,1.5);
\filldraw[lightgray] (1.6,1.7) circle (1pt);

\filldraw [draw=black] (0.5,1.2) rectangle (0.5,1.5);
\filldraw [draw=black] (0.8,1.2) rectangle (1.1,1.5);
\filldraw [draw=black] (1.3,1.2) rectangle (1.3,1.5);
\filldraw [draw=black] (1.5,1.2) rectangle (1.5,1.5);

\filldraw[lightgray] (0.5,1.7) circle (1pt);
\draw[very thick,lightgray] (0.8,1.7)--(1.1,1.7);
\filldraw[lightgray] (1.3,1.7) circle (1pt);
\filldraw[lightgray] (1.5,1.7) circle (1pt);

\filldraw[black] (0.5,1/3) circle (1pt);
\filldraw[black] (0.5,3/4) circle (1pt);
\filldraw[black] (0.5,1) circle (1pt);
\draw[very thick,black] (0.8,1/3)--(1.1,1/3);
\draw[very thick,black] (0.8,3/4)--(1.1,3/4);
\draw[very thick,black] (0.8,1)--(1.1,1);
\filldraw[black] (1.3,1/3) circle (1pt);
\filldraw[black] (1.3,3/4) circle (1pt);
\filldraw[black] (1.3,1) circle (1pt);
\filldraw[black] (1.5,1/3) circle (1pt);
\filldraw[black] (1.5,3/4) circle (1pt);
\filldraw[black] (1.5,1) circle (1pt);
\node at (-0.2,-0.2) {$(a_1,a_2)$};
\node at (-0.2,1.9) {$(a_1,b_2)$};
\node at (1.9,1.9) {$(b_1,b_2)$};
\node at (1.9,-0.2) {$(a_2,b_2)$};
\end{tikzpicture}
\caption{Boundary decomposition of a two-dimensional rectangular domain on time scales. The gray edges and points are the sets $A_1 \times \partial A_2$ and $\partial A_1 \times A_2$ and $\Omega$ consists of the black rectangles, line segments, and points.}
\label{fig:boundary2D}
\end{figure}
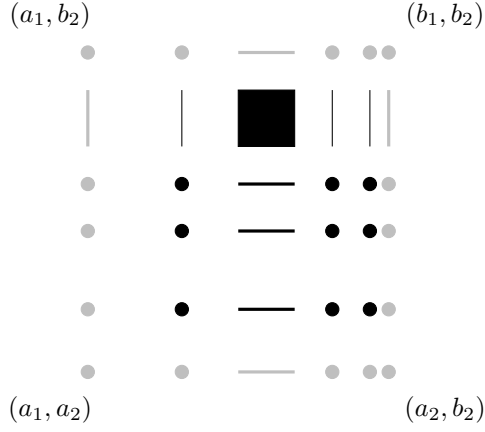

Time scales calculus was introduced in \cite{Hilger1990} and provides a
unified framework encompassing continuous analysis
($\mathbb{T} = \mathbb{R}$), discrete analysis
($\mathbb{T} = \mathbb{Z}$), and hybrid domains. It was systematically
developed in the monographs \cite{BoPe,zbMATH01878575}, where readers
can find the basic definitions of delta and nabla derivatives, forward
and backward jump operators, and graininess functions. Time scale models
arise naturally in applications where continuous and discrete dynamics
coexist: in population biology, insect populations may develop
continuously during active seasons but experience discrete generational
changes during dormancy \cite{Agarwal-Survey, Insect-Seasonal}, while
in economics, time scale calculus accommodates models with irregularly
spaced decision points
\cite{ Economic-Optimization, Siegmund-Stehlik}. Elliptic
equations on time scales model steady-state phenomena in such hybrid
systems. While ordinary dynamic equations on time scales have been
extensively studied, the theory of partial dynamic equations remains
comparatively underdeveloped.

To formulate our results precisely, we introduce the function spaces in which we work. \begin{definition}\label{def:H}
Let $\mathscr{H}$ denote the real Hilbert space of $\Delta_1\Delta_2 \cdots \Delta_n$-measurable functions $u\colon \displaystyle\prod_{i=1}^{n} [a_i, b_i) \cap \mathbb{T}_i \to \mathbb{R}$ satisfying
\[
\int_{a_1}^{b_1} \int_{a_2}^{b_2} \cdots \int_{a_n}^{b_n} u^2(x)\,\Delta_n x_n \cdots \Delta_2 x_2\,\Delta_1 x_1 < \infty,
\]
equipped with the inner product
\[
\langle u, v \rangle = \int_{a_1}^{b_1} \int_{a_2}^{b_2} \cdots \int_{a_n}^{b_n} u(x)v(x)\,\Delta_n x_n \cdots \Delta_2 x_2\,\Delta_1 x_1.
\]
\end{definition}

\begin{definition}\label{def:D}
Let $\mathscr{D} \subset \mathscr{H}$ denote the set of functions $u \in \mathscr{H}$ satisfying
\begin{enumerate}
\item[(i)] $u$ is continuous on $\overline{\Omega}$,
\item[(ii)] $u = 0$ on $\partial\Omega$,
\item[(iii)] for each $i \in \{1, \ldots, n\}$, $u$ is continuously $\nabla_i$-differentiable with respect to $x_i \in (a_i, b_i]$ for all $x_j$ for all $j \neq i$, and
\item[(iv)] for each $i \in \{1, \ldots, n\}$, $u^{\nabla_i}$ is $\Delta_i$-differentiable with respect to $x_i \in [a_i, b_i)$ for all $x_j$ with $j \neq i$.
\end{enumerate}
\end{definition}
We impose the following assumption on the nonlinearity $f \colon \Omega \times \mathbb{R} \to \mathbb{R}$:
\begin{equation}\label{eq:Lipschitz}
|f(x, \eta) - f(x, \gamma)| \leq L|\eta - \gamma| \quad \text{for all } x \in \Omega \text{ and } \eta, \gamma \in \mathbb{R},
\end{equation}
where $L > 0$ is the Lipschitz constant. We also let $\lambda_1$ denote the first eigenvalue of the linear problem
\begin{equation}\label{eq:linear-evp}
\begin{cases}
-\Delta_{\mathbb{T}} u + \lambda u = 0 & \text{in } \Omega,\\
u = 0 & \text{on } \partial\Omega.
\end{cases}
\end{equation}
We now state the main results of this paper.
\begin{theorem}\label{thm:main}
Assume \eqref{eq:Lipschitz} with $L < \lambda_1$. Then the boundary
value problem \eqref{eq:Dirichlet} has a unique solution in $\mathscr{D}$.
\end{theorem}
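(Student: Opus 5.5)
We recast \eqref{eq:Dirichlet} as a fixed point problem for the solution operator of the linear Dirichlet problem, then apply the contraction mapping theorem in $\mathscr{H}$. Let $A = -\Delta_{\mathbb{T}}$ act on $\mathscr{D}$. The spectral theory announced in the abstract gives the following: $A$ is self-adjoint and positive, and it has a complete orthonormal system of eigenfunctions. In the $n$-dimensional setting these are the products $\phi_k(x)=\prod_{i=1}^n \phi^{(i)}_{k_i}(x_i)$ of the one-dimensional Dirichlet eigenfunctions for $-u^{\nabla_i\Delta_i}$ on $A_i$. Their eigenvalues are $\mu_k=\sum_i \mu^{(i)}_{k_i}\ge \lambda_1>0$.

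I will use these facts as established. Note that the sign convention in \eqref{eq:linear-evp} means $-\Delta_{\mathbb{T}}u=-\lambda u$. I interpret $\lambda_1$ as the smallest (positive) eigenvalue of $-\Delta_{\mathbb{T}}$, i.e.\ $\lambda_1=\mu_{\min}$, possibly after the paper's sign adjustment.

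\textbf{Step 1: inverse operator.} Define $K\colon \mathscr{H}\to\mathscr{H}$ by $Kg=\sum_k \mu_k^{-1}\langle g,\phi_k\rangle\phi_k$. By Parseval, $\|Kg\|\le \lambda_1^{-1}\|g\|$. I then need $Kg\in\mathscr{D}$ and $AKg=g$ for suitable $g$. In each coordinate this amounts to solving the Dirichlet problem via the Green's function of $-u^{\nabla\Delta}$ on $[a_i,b_i]_{\mathbb{T}_i}$. Conversely, every $u\in\mathscr{D}$ satisfies $u=K(Au)$; this follows from self-adjointness, since $\langle Au,\phi_k\rangle=\mu_k\langle u,\phi_k\rangle$. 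Hence $u\in\mathscr{D}$ solves \eqref{eq:Dirichlet} if and only if $u = T u$, where $Tu := -K\,N(u)$ and $N(u)(x)=f(x,u(x))$. This equivalence comes from writing the equation as $Au = -f(\cdot,u)$.

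\textbf{Step 2: contraction.} By \eqref{eq:Lipschitz}, $\|N(u)-N(v)\|\le L\|u-v\|$ in $\mathscr{H}$; measurability of $N(u)$ follows from the Lipschitz (hence Carath\'eodory-type) structure of $f$. Therefore
\[
\|Tu-Tv\|\le \lambda_1^{-1}L\|u-v\|,
\]
and since $L/\lambda_1<1$, Banach's theorem gives a unique fixed point $u\in\mathscr{H}$. Uniqueness in $\mathscr{D}$ follows at once, because any solution in $\mathscr{D}$ is a fixed point of $T$.

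\textbf{Step 3: regularity (main obstacle).} The fixed point is a priori only an element of $\mathscr{H}$, and I must show $u=-K N(u)$ actually lies in $\mathscr{D}$. I would first handle the case $n=1$ via the explicit Green's function $G(t,s)$. This gives $u(t)=-\int G(t,s)f(s,u(s))\Delta s$, which is continuous, vanishes at the endpoints, and has $\nabla$ and then $\Delta$ derivatives by direct differentiation of the Green's representation. In several variables I would try a bootstrap. Since $f(\cdot,u)\in\mathscr{H}$ and $f$ is Lipschitz in $u$, continuity of $u$ should propagate to continuity of $f(x,u(x))$, but only if $f$ is continuous in $x$ (implicitly assumed). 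The delicate point is that the series for $K$ only converges in $\mathscr{H}$. I would first try showing uniform convergence of the eigenfunction expansion when the data $g$ is continuous, using the bound $\sum_k \mu_k^{-2}<\infty$ together with uniform bounds on $\phi_k$. If that fails, I would instead apply the one-dimensional Green's operators coordinatewise in order to verify conditions (iii)--(iv) of Definition~\ref{def:D}.
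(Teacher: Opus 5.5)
Your Steps 1 and 2 are exactly the paper's proof. The paper sets $G=A^{-1}F$, combines $\|A^{-1}\|\le 1/\lambda_1$ from \eqref{eq:A-inverse-bound} with $\|Fu-Fv\|\le L\|u-v\|$ to get the contraction constant $L/\lambda_1$, and applies Theorem~\ref{thm:CMT} in $\mathscr{H}$. Your sign $Tu=-KN(u)$ is the correct one; the paper writes $Au=Fu$, which is harmless for the estimate. Deducing uniqueness in $\mathscr{D}$ from $u=K(Au)$ makes explicit a step the paper leaves implicit. One correction: Lipschitz continuity in $\eta$ says nothing about measurability in $x$. You need $f$ continuous (or at least Carath\'eodory), which is what the paper tacitly assumes in Lemma~\ref{lem:measurable-global}.

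\textbf{The gap is in Step 3.} The paper does no regularity argument there. It quotes Theorem~\ref{thm:nD-inverse} for $A^{-1}\colon\mathscr{H}\to\mathscr{D}$, so if you grant Section~\ref{sec:nD} wholesale, Step 3 is one line. Your distrust of that claim is warranted. The proof of Theorem~\ref{thm:nD-inverse} only computes the coefficients of a solution $u\in\mathscr{D}$ of $Au=g$ that is assumed to exist. Moreover, for $\mathbb{T}=\mathbb{R}$ and $n=1$, any $g\in H$ with a jump already gives $A^{-1}g\notin D$.

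What you offer instead is a plan, and only its $n=1$ branch works. There the Green's representation makes $u$ continuous, hence $x\mapsto f(x,u(x))$ continuous, and the computation in Theorem~\ref{thm:A-inverse} then yields $u^{\nabla\Delta}$ pointwise. For $n\ge 2$ your fallbacks fail:
\begin{itemize}
\item For $\mathbb{T}_i=\mathbb{R}$ one has $\mu_k\asymp|k|^2$, so $\sum_k\mu_k^{-2}=\infty$ once $n\ge4$.
\item Uniform convergence of the expansion would only give continuity, not the pointwise $\nabla_i$- and $\Delta_i$-derivatives required by (iii)--(iv) of Definition~\ref{def:D}.
\item One-dimensional Green's operators cannot be applied coordinatewise, because $A^{-1}=(\sum_i A_i)^{-1}$, with $A_iu=-u^{\nabla_i\Delta_i}$, is not expressible through the individual $A_i^{-1}$.
\end{itemize}

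In fact no argument can close this step in the stated generality. Take $\mathbb{T}_1=\mathbb{T}_2=\mathbb{R}$, $a_i=-\tfrac12$, $b_i=\tfrac12$, and $w=\chi(x)\,(x_1^2-x_2^2)(-\log|x|)^{1/2}$, where $\chi$ is a smooth cutoff with $\chi\equiv1$ near $0$. Set $f(x,\eta):=\Delta_{\mathbb{T}}w(x)$ for $x\neq0$, extended by $0$ at $x=0$. Then $f$ is continuous with $L=0$, and the fixed point of $T$ is $w\in H^2\cap H^1_0$. However, $\partial_{x_1}^2w(0)$ does not exist: the difference quotients of $\partial_{x_1}w(\cdot,0)$ at $0$ grow like $2(-\log|h|)^{1/2}$. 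So $w\notin\mathscr{D}$. Granting $u=K(Au)$ on $\mathscr{D}$, as you and the paper both do, the problem then has no solution in $\mathscr{D}$ at all.

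Existence in $\mathscr{D}$ therefore needs extra hypotheses or a weaker notion of solution. Two settings where it does go through are $n=1$ with $f$ continuous, and every $[a_i,b_i]\cap\mathbb{T}_i$ finite.
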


The strict inequality $L < \lambda_1$ is essential: at resonance, where
$f(x,u) = -\lambda_1 u$ and $L = \lambda_1$, uniqueness fails and the
problem may admit infinitely many solutions (see
Example~\ref{ex:resonance}). More broadly, many natural nonlinearities,
such as $f(x,u) = -cu + d\sin(u)$ with $c > \lambda_1$, satisfy
a Lipschitz condition but with $L > \lambda_1$, placing them beyond the reach of Theorem~\ref{thm:main}. For such problems, we establish existence under a weaker one-sided growth condition, though uniqueness may no longer hold.

\begin{theorem}\label{thm:global}
Assume \eqref{eq:Lipschitz}. Suppose there exist constants
$\alpha \in (0, \lambda_1)$ and $C \geq 0$ such that
\begin{equation}\label{eq:energy-condition}
f(x, \eta)\,\eta \leq \alpha \eta^2 + C \quad \text{for all }
x \in \Omega,\; \eta \in \mathbb{R}.
\end{equation}
Then the boundary value problem \eqref{eq:Dirichlet} has at least one
solution in $\mathscr{D}$.
\end{theorem}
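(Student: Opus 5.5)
The plan is to recast \eqref{eq:Dirichlet} as a fixed point problem in $\mathscr{H}$ and apply the Leray--Schauder (Schaefer) fixed point theorem. I will use the spectral theory of the Dirichlet Laplacian $-\Delta_{\mathbb{T}}$ that the paper develops. It is self-adjoint and positive on $\mathscr{D}$, and the product eigenfunctions $\{\phi_k\}$ form a complete orthonormal basis of $\mathscr{H}$, with eigenvalues $0<\lambda_1\le\lambda_2\le\cdots$. From this, the solution operator $K=(-\Delta_{\mathbb{T}})^{-1}$ exists on $\mathscr{H}$, defined by $Kh=\sum_k \lambda_k^{-1}\langle h,\phi_k\rangle\phi_k$. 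Its range lies in $\mathscr{D}$, and $\|K\|\le 1/\lambda_1$.

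A function $u\in\mathscr{D}$ solves \eqref{eq:Dirichlet} iff $u = T u := K(-N u)$, where $(Nu)(x)=f(x,u(x))$ is the Nemytskii operator. I will check the hypotheses of Schaefer's form of Leray--Schauder in order.

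\textbf{Step 1: $N$ maps $\mathscr{H}$ into itself and is continuous.} By \eqref{eq:Lipschitz} we have $|f(x,\eta)|\le L|\eta|+|f(x,0)|$. This requires $f(\cdot,0)\in\mathscr{H}$, which holds if it is bounded, as is implicit in the setting. Continuity, indeed Lipschitz continuity with constant $L$, is then immediate.

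\textbf{Step 2: $K$ is compact.} This follows from $\lambda_k\to\infty$: $K$ is a norm limit of finite-rank truncations, since the tail has norm at most $\sup_{k>m}\lambda_k^{-1}\to 0$. In the $n$-dimensional product setting, $\lambda_k\to\infty$ needs care when some $\mathbb{T}_i$ is finite. Then that factor contributes only finitely many eigenvalues, but the sums still diverge unless every factor is finite, in which case $\mathscr{H}$ is finite dimensional and compactness is trivial. Hence $T=-KN$ is compact and continuous.

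\textbf{Step 3: a priori bound (the main obstacle).} Suppose $u=\mu Tu$ with $\mu\in(0,1]$. Then $u\in\mathscr{D}$ and $-\Delta_{\mathbb{T}}u=-\mu f(x,u)$. Take the inner product with $u$. The Green/summation-by-parts identity for the mixed nabla--delta Laplacian with Dirichlet data, together with the variational characterization $\langle -\Delta_{\mathbb{T}}u,u\rangle\ge\lambda_1\|u\|^2$ obtained by expanding in the eigenbasis, gives
\[
\lambda_1\|u\|^2\le \langle -\Delta_{\mathbb{T}}u,u\rangle = -\mu\langle f(\cdot,u),u\rangle .
\]
This is the wrong sign relative to \eqref{eq:energy-condition}, and that sign issue is the delicate point. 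Rewrite the equation as $\Delta_{\mathbb{T}}u=\mu f(x,u)$, so that pairing yields $\langle -\Delta_{\mathbb{T}} u, u\rangle = -\mu\int f(x,u)u$. The one-sided condition should then be read in the sign convention matching the paper's equation, namely $-f(x,\eta)\eta\le\alpha\eta^2+C$, with the example $f=-cu+d\sin u$ in mind. I would first reconcile this convention against the paper's Example, and if necessary use the form $\langle -\Delta_{\mathbb{T}}u,u\rangle\le \mu(\alpha\|u\|^2+C|\Omega|)$. Either way, the intended estimate is
\[
\lambda_1\|u\|^2\le \alpha\|u\|^2+C\,|\Omega|_{\Delta},
\]
so $\|u\|^2\le C|\Omega|_\Delta/(\lambda_1-\alpha)$, uniformly in $\mu$. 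Here $|\Omega|_\Delta$ is the $\Delta$-measure of $\prod_i[a_i,b_i)$.

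\textbf{Step 4: conclusion.} Leray--Schauder then yields a fixed point $u=Tu\in\operatorname{ran}K\subset\mathscr{D}$, which solves \eqref{eq:Dirichlet}. The expected difficulty is Step 3: justifying $\langle-\Delta_{\mathbb{T}}u,u\rangle\ge\lambda_1\|u\|^2$ for $u\in\mathscr{D}$ via the eigenfunction expansion and self-adjointness, and handling the sign convention carefully.
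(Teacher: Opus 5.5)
Your sign bookkeeping is right, and it exposes a genuine problem that Step~3 then glosses over. Since $A=-\Delta_{\mathbb{T}}$, problem \eqref{eq:Dirichlet} reads $Au=-Fu$. The homotopy therefore gives $\lambda_1\|u\|^2\le\langle Au,u\rangle=-\mu\langle Fu,u\rangle$. Hypothesis \eqref{eq:energy-condition} bounds $\langle Fu,u\rangle$ only from above, so it gives no control of $-\langle Fu,u\rangle$. Your claim that ``either way'' one gets $\lambda_1\|u\|^2\le\alpha\|u\|^2+C|\Omega|_\Delta$ is unjustified. No argument can supply it, because the theorem as stated is false.

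Take $n=1$, $\mathbb{T}=\{0,1,2,3\}$ (so $\lambda_1=1$) and $f(x,\eta)=1-\eta$. This $f$ is Lipschitz with $L=1$, and $f(x,\eta)\eta=\eta-\eta^2\le\frac12\eta^2+\frac16$, so the hypotheses hold with $\alpha=\frac12$, $C=\frac16$. Yet \eqref{eq:ex-system-reduced} becomes $u_1-u_2+1=0$, $-u_1+u_2+1=0$, and adding the two equations gives $2=0$. More generally, $f(x,\eta)=-\lambda_1\eta+g(x)$ with bounded $g$ and $\langle g,\phi_1\rangle\neq 0$ is a counterexample by the Fredholm alternative.

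The paper's own proof reaches its bound only by writing \eqref{eq:Dirichlet} as $Au=Fu$ and the homotopy as $Au=\tau Fu$. The fixed point of $A^{-1}F$ it produces solves $-\Delta_{\mathbb{T}}u-f(x,u)=0$, not \eqref{eq:Dirichlet}. So on the sign you are correct and the paper is not.

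You should state this outright rather than hedge between conventions. What your argument does prove is existence for \eqref{eq:Dirichlet} under the flipped hypothesis $-f(x,\eta)\eta\le\alpha\eta^2+C$. Equivalently, it proves existence for $-\Delta_{\mathbb{T}}u=f(x,u)$ under the stated hypothesis. For that corrected statement, your Steps 1--4 are essentially the paper's Leray--Schauder argument: a Lipschitz Nemytskii operator, a compact $A^{-1}$, and the a priori bound $\|u\|^2\le C|\Omega|/(\lambda_1-\alpha)$ from $\langle Au,u\rangle\ge\lambda_1\|u\|^2$.

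Your Step~2 is in fact more careful than the paper. The paper asserts compactness of $A^{-1}$ in Lemma~\ref{lem:compact-G}, but Theorem~\ref{thm:nD-inverse} only proves $\|A^{-1}\|\le 1/\lambda_1$. Your observation that the level sets $\{\lambda_k\le M\}$ are finite supplies the missing norm-limit-of-finite-rank argument.

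Drop the appeal to $f=-cu+d\sin u$. For $c>\lambda_1$ it satisfies the stated condition $f\eta\le\alpha\eta^2+C$ and violates your flipped one. It is therefore consistent with the reading that matches $-\Delta_{\mathbb{T}}u=f(x,u)$, not with yours.
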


The one-sided condition \eqref{eq:energy-condition} is satisfied
whenever $f(x,\eta)\eta$ is controlled by a quadratic with coefficient
below $\lambda_1$.
Theorem~\ref{thm:main} is proved using the contraction mapping theorem, while 
Theorem~\ref{thm:global} is proved using the Leray--Schauder fixed point theorem. Both proofs require reformulating the differential equation \eqref{eq:Dirichlet} as an operator equation of the form $u = A^{-1}F(u)$, where $A = -\Delta_{\mathbb{T}}$ with domain $\mathscr{D}$ and $F$ is the Nemytskii operator induced by $f$. This requires developing the spectral theory for \eqref{eq:Dirichlet}. We establish self-adjointness, positivity, and completeness of eigenfunctions, which implies $A^{-1}$ is a compact bounded operator.

The study of partial dynamic equations on time scales was initiated in \cite{Billy, Billy-Thesis}, where the foundational theory with applications to heat transfer and wave propagation was established, and such linear models were later solved in explicit form for certain time scales in \cite{theots}. Maximum principles for elliptic dynamic equations were established in \cite{Stehlik-2010, Stehlik-Thompson}, while the measure-theoretic framework for multiple integration was established in \cite{Bohner-Guseinov-Multiple, Bohner-Guseinov-Lebesgue}. When $\mathbb{T}=\mathbb{Z}$, the $\Delta_{\mathbb{T}}$ operator reduces to the same spatial difference operator studied in \cite {Billy-Thesis, bavsic2024discrete}, \cite[(1.2)]{slavik2018discrete}, and \cite[(1.1)]{slavik2014explicit} in the context of lattice differential equations. 

To the best of our knowledge, the first nonlinear elliptic boundary value problem on time scales was studied in \cite{Huseynov-2009}, which proved existence and uniqueness in two dimensions under the Lipschitz condition $L < \lambda_1$ using the contraction mapping theorem. That work uses the $\Delta\nabla$ operator with $\mu_\nabla$-measure, following the one-dimensional spectral theory of \cite{Guseinov-EF}, and the completeness of the product eigenfunctions in two dimensions is asserted by reference to \cite{Guseinov-EF} without proof. Extending to $n$ dimensions requires the Lebesgue theory of multiple integration on time scales developed in \cite{Bohner-Guseinov-Lebesgue}, which is constructed for the $\Delta$-measure. Since no $n$-dimensional Lebesgue $\nabla$-integration theory is available in the literature, the function space framework forces the use of $\mu_\Delta$-measure, and the operator that is self-adjoint with respect to this measure is $\nabla\Delta$, not $\Delta\nabla$. However, very limited spectral theory \cite[(Rem. 3.4)]{Guseinov-EF} for the $\nabla\Delta$ operator exists in the literature: \cite{Guseinov-EF} treats $\Delta\nabla$ with $\mu_\nabla$-measure in one dimension, and \cite{anderson2006higher} extends this to higher-order $\Delta\nabla$ equations, again in one dimension. We therefore develop the spectral theory for the $\nabla\Delta$ operator from scratch in Sections~\ref{sec:1D}--\ref{sec:nD}, establishing self-adjointness, positivity, completeness of eigenfunctions, and the eigenfunction expansion in $n$ dimensions. Beyond the spectral theory, our work introduces the one-sided growth condition \eqref{eq:energy-condition} and establishes existence via the Leray--Schauder fixed point theorem (Theorem~\ref{thm:global}), which has no analogue in \cite{Huseynov-2009}. For $\mathbb{T} = \mathbb{R}$ and $\mathbb{T} = \mathbb{Z}$, the $\nabla\Delta$ and $\Delta\nabla$ theories coincide.

The remainder of this paper is organized as follows. Section~\ref{sec:prelim} establishes preliminaries on time scale calculus. Section~\ref{sec:1D} develops the spectral theory for the one-dimensional Dirichlet problem. Section~\ref{sec:nD} extends this theory to $n$ dimensions via separation of variables. Sections~\ref{sec:main} and \ref{sec:proof-global} prove Theorems~\ref{thm:main} and \ref{thm:global}. Section~\ref{sec:examples} illustrates the scope and sharpness of both theorems with explicit examples, and Section~\ref{sec:conclusion} discusses limitations and directions for future work.

\section{Preliminaries}\label{sec:prelim}

We establish the necessary background for our investigation. The delta derivative $f^\Delta$ and nabla derivative $f^\nabla$ are defined as in \cite{BoPe}. A point $t \in \mathbb{T}$ is called right-scattered if $\sigma(t) > t$, right-dense if $\sigma(t) = t$, left-scattered if $\rho(t) < t$, and left-dense if $\rho(t) = t$. A point that is both right-scattered and left-scattered is called isolated.

We begin by stating the fundamental relationship between the delta and nabla derivatives. In \cite[Theorems 2.5, 2.6]{Atici-Guseinov-GF}, it was shown that the $\Delta$-derivative and $\nabla$-derivative obey the following formulas: if $f\colon [a,b]\cap\mathbb{T}\to \mathbb{R}$ is continuous and $\Delta$-differentiable on $[a,b)\cap\mathbb{T}$ with continuous $f^\Delta$, then $f$ is $\nabla$-differentiable on $(a,b]\cap\mathbb{T}$ and
\begin{equation}\label{eq:nabla-from-delta}
f^\nabla(t) = f^\Delta(\rho(t)) \quad \text{for all } t \in (a,b]\cap\mathbb{T};
\end{equation}
if $f$ is $\nabla$-differentiable on $(a,b]\cap\mathbb{T}$ with continuous $f^\nabla$, then $f$ is $\Delta$-differentiable on $[a,b)\cap\mathbb{T}$ and
\begin{equation}\label{eq:delta-from-nabla}
f^\Delta(t) = f^\nabla(\sigma(t)) \quad \text{for all } t \in [a,b)\cap\mathbb{T}.
\end{equation}
The following integration by parts formulas \cite[Theorem 2.4]{Guseinov-EF} are essential for establishing self-adjointness of our differential operators: for continuous $f$ and $g$ on $[a,b]\cap\mathbb{T}$ with $f^{\Delta}$ and $g^{\nabla}$ continuous and bounded,
\begin{equation}
\int_a^b f^\Delta(t) g(t) \Delta t = f(t)g(t) \Big|_a^b - \int_a^b f(t) g^\nabla(t) \nabla t, \label{eq:ibp1}
\end{equation}
and
\begin{equation}
\int_a^b f^\nabla(t) g(t) \nabla t = f(t)g(t) \Big|_a^b - \int_a^b f(t) g^\Delta(t) \Delta t. \label{eq:ibp2}
\end{equation}

Fubini's theorem for time scales \cite[Theorem~3.10]{Bohner-Guseinov-Multiple} says for a function $f\colon \mathbb{T}_1 \times \mathbb{T}_2 \to \mathbb{R}$ that is bounded and $\Delta$-integrable over $R = [a,b) \times [c,d) \subset \mathbb{T}_1 \times \mathbb{T}_2$, if the integrals $I(t) = \displaystyle\int_c^d f(t,s) \Delta_2 s$ and $K(s) = \displaystyle\int_a^b f(t,s) \Delta_1 t$
exist for each $(t,s) \in [a,b) \cap\mathbb{T}_1 \times [c,d) \cap\mathbb{T}_2 \in [a,b)$, then the iterated integrals exist and satisfy
\begin{equation}\label{thm:Fubini}
\iint_R f(t,s) \Delta_1 t \Delta_2 s = \displaystyle\int_a^b I(t) \Delta_1 t = \displaystyle\int_c^d K(s) \Delta_2 s .
\end{equation}
Finally, we recall the following standard fixed point results that are relevant for our purposes.
\begin{theorem}[Hilbert--Schmidt theorem]\label{thm:HS}
Let $(\mathscr{H}, \langle \cdot, \cdot \rangle)$ be a real Hilbert space and let $T\colon \mathscr{H} \to \mathscr{H}$ be a bounded, compact, self-adjoint operator. Then there exists a sequence of nonzero real eigenvalues $\mu_i$, $i = 1, \ldots, N$ (where $N=\text{rank } T$), with $|\mu_i|$ monotonically non-increasing, and an orthonormal set $\{\phi_i\}_{i=1}^N$ of corresponding eigenfunctions such that $Tx = \displaystyle\sum_{i=1}^{N} \mu_i \langle \phi_i, x \rangle \phi_i$ for $x \in \mathscr{H}$. If $\ker(T) = \{0\}$, then the eigenfunctions $\{\phi_i\}_{i=1}^N$ form an orthonormal basis for $\mathscr{H}$.
\end{theorem}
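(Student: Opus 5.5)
The Hilbert--Schmidt theorem is quoted here as a standard result, so the plan is to reduce it to the spectral theorem for compact self-adjoint operators on a real Hilbert space. The main step is to show that for a nonzero compact self-adjoint $T$ some eigenvalue $\mu$ satisfies $|\mu| = \|T\|$.

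To get it, I would use the identity $\|T\| = \sup_{\|x\|=1} |\langle Tx, x\rangle|$, which holds for self-adjoint $T$ and follows from the polarization identity. Take a maximizing sequence $x_k$ with $\|x_k\| = 1$ and $\langle Tx_k, x_k\rangle \to \mu$, where $|\mu| = \|T\|$. Expanding gives
\[
\|Tx_k - \mu x_k\|^2 = \|Tx_k\|^2 - 2\mu\langle Tx_k, x_k\rangle + \mu^2 \le 2\mu^2 - 2\mu\langle Tx_k, x_k\rangle \to 0 .
\]
By compactness, a subsequence of $Tx_k$ converges to some $y$. Then $\mu x_k \to y$, so $y \neq 0$ and $Ty = \mu y$. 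I expect this step to be the main obstacle, because it is the only place where compactness is used in an essential way.

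Next I would iterate. Set $\phi_1 = y/\|y\|$ and $\mu_1 = \mu$. The orthogonal complement $\{\phi_1\}^\perp$ is invariant under $T$, by self-adjointness. The restriction of $T$ to it is again compact and self-adjoint, with norm at most $|\mu_1|$.

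Repeating the construction produces orthonormal $\phi_i$ and eigenvalues with $|\mu_i|$ non-increasing. The process stops after $N$ steps exactly when the restriction of $T$ to the complement of $\mathrm{span}\{\phi_1,\dots,\phi_N\}$ is zero, and in that case $N = \operatorname{rank} T$.

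If the process does not stop, then $\mu_i \to 0$. Otherwise, if $|\mu_i| \ge \varepsilon > 0$ for all $i$, we would have $\|T\phi_i - T\phi_j\|^2 = \mu_i^2 + \mu_j^2 \ge 2\varepsilon^2$ for $i \neq j$, so $\{T\phi_i\}$ has no convergent subsequence. This contradicts compactness.

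For the expansion, fix $x$ and let $r_m = x - \sum_{i\le m} \langle \phi_i, x\rangle \phi_i$. This residual lies in $\{\phi_1,\dots,\phi_m\}^\perp$, so
\[
\Bigl\|Tx - \sum_{i\le m} \mu_i \langle \phi_i, x\rangle \phi_i\Bigr\| = \|Tr_m\| \le |\mu_{m+1}|\,\|r_m\| \le |\mu_{m+1}|\,\|x\| \to 0 .
\]
This gives $Tx = \sum_i \mu_i \langle \phi_i, x\rangle \phi_i$.

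Finally, suppose $\ker T = \{0\}$ and let $z$ be orthogonal to every $\phi_i$. Then the expansion gives $Tz = 0$, so $z = 0$. Hence $\{\phi_i\}$ is complete, and the eigenfunctions form an orthonormal basis for $\mathscr{H}$.
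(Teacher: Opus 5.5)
Your proof is correct. It is the standard argument for compact self-adjoint operators: you get an eigenvalue of modulus $\|T\|$ from a maximizing sequence for $|\langle Tx,x\rangle|$, restrict to the invariant orthogonal complement, show $\mu_i\to 0$ by compactness, and use the residual bound $\|Tr_m\|\le|\mu_{m+1}|\,\|x\|$ to obtain both the expansion and, when $\ker T=\{0\}$, completeness. The paper gives no proof of its own here; it quotes the Hilbert--Schmidt theorem as a standard result, so there is nothing further to compare.
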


\begin{theorem}[Contraction Mapping Theorem]\label{thm:CMT}
Let $(X, d)$ be a nonempty complete metric space, and let $W\colon X \to X$ be a contraction mapping; that is, there exists a constant $0 < c < 1$ such that for all $x, y \in X$, $d(Wx, Wy) \leq c \, d(x, y)$.
Then $W$ has a unique fixed point $x^* \in X$, and for any $x_0 \in X$, the sequence defined by $x_{n+1} = W(x_n)$ converges to $x^*$.
\end{theorem}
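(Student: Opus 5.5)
This is the standard Banach fixed point theorem, so I would give the classical Picard-iteration proof. Fix any $x_0\in X$ and define $x_{k+1}=W(x_k)$. Applying the contraction inequality repeatedly gives
\[
d(x_{k+1},x_k)\le c^k\, d(x_1,x_0).
\]
For $m>k$, the triangle inequality and a geometric series bound then give
\[
d(x_m,x_k)\le \sum_{j=k}^{m-1} c^j\, d(x_1,x_0)\le \frac{c^k}{1-c}\, d(x_1,x_0).
\]
The right-hand side tends to $0$ as $k\to\infty$ because $0<c<1$, so $(x_k)$ is Cauchy. Completeness of $(X,d)$ then yields a limit $x^*\in X$.

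Next I would show that $x^*$ is a fixed point. A contraction is Lipschitz, hence continuous, so
\[
W(x^*)=\lim_{k\to\infty} W(x_k)=\lim_{k\to\infty} x_{k+1}=x^*.
\]
Alternatively one can estimate directly: $d(Wx^*,x^*)\le c\,d(x^*,x_k)+d(x_{k+1},x^*)$, which tends to $0$.

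For uniqueness, suppose $y^*$ is another fixed point. Then
\[
d(x^*,y^*)=d(Wx^*,Wy^*)\le c\,d(x^*,y^*),
\]
so $(1-c)\,d(x^*,y^*)\le 0$ and $x^*=y^*$. Since the fixed point is unique, the iteration started from any $x_0$ converges to the same $x^*$, which is the final claim. Nonemptiness of $X$ is used only to pick the starting point $x_0$.

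No step here is a genuine obstacle. The one point that needs care is the Cauchy estimate: it requires the geometric-series bound, not merely $d(x_{k+1},x_k)\to 0$. In the paper this theorem will be applied to $W=A^{-1}F$ on $\mathscr{H}$, where the needed contraction constant $L/\lambda_1<1$ comes from $\|A^{-1}\|\le 1/\lambda_1$.
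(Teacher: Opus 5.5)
Your proof is correct: it is the classical Picard-iteration proof of the Banach fixed point theorem, and the geometric-series Cauchy estimate, the continuity argument identifying the limit as a fixed point, and the $(1-c)\,d(x^*,y^*)\le 0$ uniqueness step are all sound. The paper gives no proof of this theorem; it simply recalls it as a standard result and applies it to $G=A^{-1}F$ on $\mathscr{H}$ with contraction constant $L/\lambda_1$, exactly as you anticipate.
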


\begin{theorem}[Leray--Schauder fixed point theorem {\cite[Ch.~XII]{Kolmogorov-Fomin}}]\label{thm:LS}
Let $X$ be a real Banach space and $G\colon X\to X$ a continuous, compact operator. Suppose that the set
\[
\{x\in X \colon x = \tau Gx \text{ for some } \tau\in[0,1]\}
\]
is bounded. Then $G$ has at least one fixed point in $X$.
\end{theorem}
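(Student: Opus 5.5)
The plan is to reduce the statement to the Schauder fixed point theorem: a continuous map of a nonempty closed bounded convex subset $K$ of a Banach space into a relatively compact subset of $K$ has a fixed point. Throughout, ``compact operator'' means that $G$ maps bounded sets to relatively compact sets. By hypothesis there is $M>0$ with $\|x\|<M$ whenever $x=\tau Gx$ for some $\tau\in[0,1]$. Fix $R:=M$ and let $B_R=\{x\in X\colon \|x\|\le R\}$.

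\textbf{Step 1 (truncation).} Define the radial retraction $r\colon X\to B_R$ by $r(y)=y$ if $\|y\|\le R$ and $r(y)=Ry/\|y\|$ otherwise. This map is continuous: the two formulas agree on $\|y\|=R$, and each is continuous on its region. Set $H=r\circ G\colon B_R\to B_R$.

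Then $H$ is continuous as a composition of continuous maps. Since $B_R$ is bounded, $G(B_R)$ is relatively compact. A continuous image of a compact set is compact, so $H(B_R)\subset r\bigl(\overline{G(B_R)}\bigr)$, and $H(B_R)$ is relatively compact.

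\textbf{Step 2 (Schauder).} The ball $B_R$ is closed, bounded and convex, so Schauder's theorem gives $x\in B_R$ with $x=r(Gx)$.

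If $\|Gx\|\le R$, then $x=Gx$ and we are done. Otherwise $x=\tau Gx$ with $\tau=R/\|Gx\|\in(0,1)$, so $x$ lies in the bounded set of the hypothesis and hence $\|x\|<M=R$. But $\|x\|=\|r(Gx)\|=R$, a contradiction. Therefore $x$ is a fixed point of $G$.

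\textbf{Step 3 (Schauder's theorem, the main obstacle).} If Schauder's theorem is not taken as known, I would prove it from Brouwer's theorem by finite-dimensional approximation. The steps are as follows.

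\begin{enumerate}
\item Let $C=\overline{H(B_R)}$, which is compact. For $\varepsilon>0$ choose a finite $\varepsilon$-net $y_1,\dots,y_m\in C$.
\item Define the Schauder projection
\[
P_\varepsilon(y)=\frac{\sum_j \max(0,\varepsilon-\|y-y_j\|)\,y_j}{\sum_j \max(0,\varepsilon-\|y-y_j\|)}.
\]
It is continuous on $C$, maps into $\operatorname{conv}\{y_j\}\subset B_R$, and satisfies $\|P_\varepsilon y-y\|<\varepsilon$.
\item The map $P_\varepsilon\circ H$ sends the finite-dimensional compact convex set $\operatorname{conv}\{y_j\}$ into itself. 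This set is homeomorphic to a ball, so Brouwer's theorem gives a fixed point $x_\varepsilon$ with $\|Hx_\varepsilon-x_\varepsilon\|<\varepsilon$.
\item Take $\varepsilon=1/k$. By compactness of $C$, a subsequence of $Hx_{1/k}$ converges to some $x$, and then $x_{1/k}\to x$ along the same subsequence. Continuity of $H$ gives $Hx=x$.
\end{enumerate}

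The delicate points are the continuity of $P_\varepsilon$ and keeping all iterates inside the convex set $B_R$; both follow directly from the construction.
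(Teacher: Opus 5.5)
The paper does not prove this statement; it quotes it from Kolmogorov--Fomin as a standard tool and only applies it in Section~\ref{sec:proof-global}. So there is no argument in the paper to compare yours against step by step.

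Your proof is the standard one for this result, often called Schaefer's fixed point theorem, and it is correct. The reduction step works as written:
\begin{itemize}
\item you choose $R$ strictly larger than the a priori bound and retract radially onto $B_R$;
\item you apply Schauder's theorem to $r\circ G$ on $B_R$;
\item you exclude $\|Gx\|>R$, since then $x=\tau Gx$ with $\tau=R/\|Gx\|\in(0,1)$ while $\|x\|=R$, contradicting the bound.
\end{itemize}
Your derivation of Schauder's theorem from Brouwer's via Schauder projections also has no gap. Because the $\varepsilon$-net is taken inside $C\subset B_R$, the set $\operatorname{conv}\{y_j\}$ lies in $B_R$, is invariant under $P_\varepsilon\circ H$, and is a finite-dimensional compact convex set to which Brouwer applies.

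Compared with the bare citation, your version is self-contained modulo Brouwer's theorem. It also fixes the meaning of ``compact'' as mapping bounded sets to relatively compact sets. That is exactly the property that must be verified for $G=A^{-1}F$ in Lemma~\ref{lem:compact-G}. There it requires compactness of $A^{-1}$ on $\mathscr{H}$, not just the bound $\|A^{-1}\|\le 1/\lambda_1$ that Theorem~\ref{thm:nD-inverse} actually states.
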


\section{One-dimensional Dirichlet eigenvalue problem}\label{sec:1D}

We develop the eigenfunction expansion theory for the one-dimensional Dirichlet eigenvalue problem with the $\nabla\Delta$ operator. In \cite{Guseinov-EF}, eigenfunction expansions for $-[p(t)y^\Delta(t)]^\nabla + q(t)y(t) = \lambda y(t)$ were established using the $\Delta\nabla$ operator with $\mu_\nabla$-measure. We develop the analogous theory for the Dirichlet problem with the $\nabla\Delta$ operator and $\mu_\Delta$-measure.

Let $\mathbb{T}$ be a time scale and let $a, b \in \mathbb{T}$ be fixed points with $a < b$ such that $(a,b)\cap\mathbb{T}$ is nonempty. We consider the one-dimensional eigenvalue problem
\begin{equation}\label{eq:1D-evp}
\begin{cases}
-y^{\nabla\Delta}(t) = \lambda y(t), & t \in (a,b)\cap\mathbb{T},\\
y(a) = y(b) = 0.
\end{cases}
\end{equation}

Denote by $H$ the real Hilbert space of all $\Delta$-measurable functions $y\colon [a,b)\cap\mathbb{T} \to \mathbb{R}$ satisfying
\begin{enumerate}
\item[(i)] $y(a) = 0$ whenever $a$ is right-scattered, and
\item[(ii)] $\displaystyle\int_a^b y^2(t) \, \Delta t < \infty$.
\end{enumerate}
The space $H$ is equipped with the inner product $\langle y, z \rangle = \displaystyle\int_a^b y(t) z(t) \Delta t$
and the induced norm $\|y\| = \sqrt{\langle y, y \rangle}$. This is the one-dimensional analogue of the space $\mathscr{H}$ defined in Section~\ref{sec:intro}. 

\begin{remark} \label{rem:H-completeness}
The space $H$ is complete. To see this, recall that the Lebesgue $\Delta$-measure $\mu_\Delta$ on time scales is constructed via the Carath\'eodory extension as described in \cite{Bohner-Guseinov-Lebesgue}. The space $L^2([a,b), \mu_\Delta)$ of square-integrable functions with respect to this measure is a Hilbert space. The condition $y(a) = 0$ when $a$ is right-scattered defines a closed subspace of $L^2([a,b), \mu_\Delta)$, since evaluation at a right-scattered point is a continuous linear functional. Therefore $H$ is complete as a closed subspace of a complete space.
\end{remark}

Let $D$ denote the set of all functions $y \in H$ satisfying the following conditions (adapted as the one-dimensional analogue of the domain introduced in Section~\ref{sec:intro})
\begin{enumerate}
\item[(i)] $y$ is continuous on $[a,b)\cap\mathbb{T}$, $y(a) = 0$, there exists $y(b) := \displaystyle\lim_{t \to b^-} y(t)$ and $y(b) = 0$,
\item[(ii)] $y$ is continuously $\nabla$-differentiable on $(a,b]$, and there exist finite limits $y^\nabla(a) := \displaystyle\lim_{t \to a^+} y^\nabla(t)$ and $y^\nabla(b) := \displaystyle\lim_{t \to b^-} y^\nabla(t)$, and
\item[(iii)] $y^\nabla$ is $\Delta$-differentiable on $[a,b)$ with $y^{\nabla\Delta} \in H$.
\end{enumerate}
\begin{comment}
\shalmalirmk{The set $\mathscr{D}$ is a linear subset dense in $\mathscr{H}$. working on this. This was coming from Guseinov's Eigenfunction expansion paper. See the Remark below. {\color{violet}$\leftarrow$ do we need to mention it?}} 
\end{comment}
We define the operator $A\colon D \subset H \to H$ by $(Ay)(t) = -y^{\nabla\Delta}(t)$. The eigenvalue problem \eqref{eq:1D-evp} is equivalent to the operator equation $Ay = \lambda y$ for $y \in D$ with $y \neq 0$.

\begin{remark}\label{rem:D-dense}{\rm
The set $D$ is a linear subspace dense in $H$. To see density, observe that in the discrete case when $(a,b) \cap \mathbb{T}$ consists of finitely many points, $H$ is finite-dimensional and $D = H$. In the continuous case $\mathbb{T} = \mathbb{R}$, density follows from the standard result that $C_0^\infty(a,b)$ is dense in $L^2(a,b)$. For general time scales, the condition $y(a) = 0$ when $a$ is right-scattered (imposed in the definition of $H$) is essential for ensuring density; see \cite[Remark 3.4]{Guseinov-EF} for the analogous observation in the $\Delta\nabla$ setting.}
\end{remark}

We now explore self-adjointness and positivity of the operator $A$.

\begin{theorem}\label{thm:1D-self-adjoint}
The operator $A$ is symmetric and positive. Specifically, for all $y, z \in D$,
\begin{align}
\langle Ay, z \rangle &= \langle y, Az \rangle, \label{eq:1D-symmetric}\\
\langle Ay, y \rangle &= \int_a^b \left( y^\nabla(t) \right)^2 \nabla t. \label{eq:1D-positive}
\end{align}
In particular, $\langle Ay, y \rangle > 0$ for all $y \in D$ with $y \neq 0$.
\end{theorem}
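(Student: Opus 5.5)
The plan is to apply the integration by parts formula \eqref{eq:ibp2} once, with $f = y^\nabla$ and $g = z$, reading the roles of $\Delta$ and $\nabla$ carefully. Precisely, I would use the version
\[
\int_a^b f^\Delta(t) g(t)\,\Delta t = f(t)g(t)\Big|_a^b - \int_a^b f(t) g^\nabla(t)\,\nabla t,
\]
which is \eqref{eq:ibp1}, with $f = y^\nabla$ and $g = z$. This gives
\[
\langle Ay, z\rangle = -\int_a^b y^{\nabla\Delta}(t) z(t)\,\Delta t = -y^\nabla(t)z(t)\Big|_a^b + \int_a^b y^\nabla(t) z^\nabla(t)\,\nabla t .
\]
The boundary term vanishes because $z(a) = z(b) = 0$ for $z \in D$, and the limits $y^\nabla(a)$ and $y^\nabla(b)$ are finite by condition (ii) of $D$. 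Hence
\[
\langle Ay, z\rangle = \int_a^b y^\nabla z^\nabla\,\nabla t.
\]
The right-hand side is symmetric in $y$ and $z$, so \eqref{eq:1D-symmetric} follows. Setting $z = y$ gives \eqref{eq:1D-positive}.

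\textbf{Main obstacle: justifying the integration by parts.} The hypotheses of \eqref{eq:ibp1} require $f = y^\nabla$ to be continuous on $[a,b]$ with $f^\Delta$ continuous and bounded. Condition (iii) of $D$, however, only gives $y^{\nabla\Delta} \in H$, i.e.\ square integrable. I would handle this as follows.
\begin{itemize}
\item Extend $y^\nabla$ continuously to $[a,b]$ using the limits in (ii).
\item Argue that the product rule $(y^\nabla z)^\Delta = y^{\nabla\Delta} z + (y^\nabla)^\sigma z^\Delta$ holds, and integrate it directly, using the fundamental theorem for $\Delta$-integrals of functions with integrable derivative.
\item Convert $\int (y^\nabla)^\sigma z^\Delta\,\Delta t$ into $\int y^\nabla z^\nabla\,\nabla t$ via \eqref{eq:delta-from-nabla}, using $z^\Delta(t) = z^\nabla(\sigma(t))$.
\item Match $\Delta$-integrals of $g\circ\sigma$ with $\nabla$-integrals of $g$: at a right-scattered $t$ the $\Delta$-measure gives weight $\mu(t)$ to $t$, while the $\nabla$-measure gives weight $\nu(\sigma(t)) = \mu(t)$ to $\sigma(t)$, and the two agree on dense parts.
\end{itemize}
Alternatively, if the interior regularity in the paper's convention suffices, I would simply cite \eqref{eq:ibp1}, noting that boundedness of $y^{\nabla\Delta}$ can be relaxed to integrability by a standard approximation on $[a+\epsilon, b-\epsilon]$ followed by passage to the limit.

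\textbf{Strict positivity.} The integral $\int_a^b (y^\nabla)^2\,\nabla t$ is nonnegative. Suppose it equals $0$.
\begin{itemize}
\item Since $y^\nabla$ is continuous, it vanishes on $(a,b]$. At left-dense points this follows because nonempty relatively open sets have positive $\nabla$-measure; at left-scattered points it follows because the point itself carries positive $\nabla$-mass $\nu(t) > 0$.
\item Then $y$ is constant on $[a,b]$: by \eqref{eq:delta-from-nabla}, $y^\Delta \equiv 0$ on $[a,b)$, and a function with zero derivative is constant.
\item Since $y(a) = 0$, we get $y \equiv 0$, contradicting $y \neq 0$.
\end{itemize}
Hence $\langle Ay, y\rangle > 0$ for all nonzero $y \in D$.
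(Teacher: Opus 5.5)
Your proposal is correct and follows essentially the paper's route: one application of \eqref{eq:ibp1} with $f=y^\nabla$, $g=z$ gives the form $\int_a^b y^\nabla z^\nabla\,\nabla t$. The paper then does a second integration by parts via \eqref{eq:ibp2} where you simply use the symmetry of this form, and both finish with the same vanishing-derivative argument for strict positivity. Your product-rule justification of the integration by parts when $y^{\nabla\Delta}$ is only in $H$, and your use of continuity of $y^\nabla$ to pass from ``zero a.e.'' to ``zero everywhere,'' add rigor the paper omits. Your fallback of shrinking to $[a+\epsilon,b-\epsilon]$ would not help, since $y^{\nabla\Delta}$ may be unbounded or discontinuous at interior points, but your primary route does not need it.
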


\begin{proof}
For $y, z \in D$, $\langle Ay, z \rangle = -\displaystyle\int_a^b y^{\nabla\Delta}(t) z(t) \Delta t$. Apply \eqref{eq:ibp1} with $f = y^\nabla$ and $g = z$ to obtain
\begin{align*}
-\int_a^b y^{\nabla\Delta}(t) z(t) \Delta t &= -y^\nabla(t) z(t) \Big|_a^b + \int_a^b y^\nabla(t) z^\nabla(t) \nabla t= \int_a^b y^\nabla(t) z^\nabla(t) \nabla t,
\end{align*}
where the boundary conditions $z(a) = z(b) = 0$ are used.
Now apply \eqref{eq:ibp2} with $f = y$ and $g = z^\nabla$ and use $y(a) = y(b) = 0$ to get
\begin{align}
\label{eq:symmetry-A}
\int_a^b y^\nabla(t) z^\nabla(t) \nabla t &= y(t) z^\nabla(t) \Big|_a^b - \int_a^b y(t) z^{\nabla\Delta}(t) \Delta t = -\int_a^b y(t) z^{\nabla\Delta}(t) \Delta t = \langle y, Az \rangle,
\end{align}
proving \eqref{eq:1D-symmetric}.
Then \eqref{eq:1D-positive} follows by setting $z = y$ in \eqref{eq:symmetry-A}.
Since the integrand in \eqref{eq:1D-positive} is nonnegative, we have $\langle Ay, y \rangle \geq 0$. If $\langle Ay, y \rangle = 0$, then $y^\nabla(t) = 0$ for almost every $t$, which implies $y$ is constant. Boundary conditions in \eqref{eq:1D-evp} then imply $y \equiv 0$. Therefore $\langle Ay, y \rangle > 0$ for all nonzero $y \in D$. This completes the proof.
\end{proof}

\begin{corollary}\label{cor:eigenvalue-properties}
All eigenvalues of \eqref{eq:1D-evp} are real, positive and simple. Moreover, eigenfunctions corresponding to distinct eigenvalues are orthogonal.
\end{corollary}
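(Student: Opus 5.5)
Everything except simplicity follows from Theorem~\ref{thm:1D-self-adjoint}; simplicity needs a uniqueness argument for the initial value problem.

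\emph{Reality and positivity.} Let $y \in D$, $y\neq 0$, satisfy $Ay=\lambda y$. If complex eigenvalues are a concern, write $y = y_1 + i y_2$ and $\lambda=\alpha+i\beta$ with $y_1,y_2\in D$. Since $A$ has real coefficients, the equation splits into $Ay_1 = \alpha y_1 - \beta y_2$ and $Ay_2 = \beta y_1 + \alpha y_2$. Pair the first with $y_2$ and the second with $y_1$, then subtract and use \eqref{eq:1D-symmetric}. This gives $\beta(\|y_1\|^2+\|y_2\|^2)=0$, so $\beta=0$. With $\lambda$ real we may take $y$ real, and then
\[
\lambda\|y\|^2=\langle Ay,y\rangle>0
\]
by \eqref{eq:1D-positive}. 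Hence $\lambda>0$.

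\emph{Orthogonality.} Suppose $Ay=\lambda y$ and $Az=\nu z$ with $\lambda\neq\nu$. By \eqref{eq:1D-symmetric},
\[
(\lambda-\nu)\langle y,z\rangle=\langle Ay,z\rangle-\langle y,Az\rangle=0,
\]
so $\langle y,z\rangle=0$.

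\emph{Simplicity.} Let $y,z$ be eigenfunctions for the same $\lambda$. Consider the Wronskian-type quantity $W(t)=y(t)z^\nabla(t)-y^\nabla(t)z(t)$. I would show $W$ is constant, using the product rule together with \eqref{eq:nabla-from-delta}--\eqref{eq:delta-from-nabla} to handle the mixed derivatives.

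A cleaner route avoids the Wronskian. Both $y$ and $z$ vanish at $a$, so $w=z^\nabla(a)\,y - y^\nabla(a)\,z$ solves $-w^{\nabla\Delta}=\lambda w$ with $w(a)=0$ and $w^\nabla(a)=0$ (limits in the sense of the definition of $D$). Uniqueness for this second-order initial value problem then forces $w\equiv0$. If $y^\nabla(a)\ne0$, this makes $z$ a multiple of $y$. If instead $y^\nabla(a)=0$, the same uniqueness gives $y\equiv0$, contradicting that $y$ is an eigenfunction.

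I expect the uniqueness step to be the main obstacle, because the time scale may mix dense and scattered parts. I would first rewrite the equation as the integral identity
\[
w^\nabla(t)=w^\nabla(a)-\lambda\int_a^t w\,\Delta s,\qquad w(t)=w(a)+\int_a^t w^\nabla\,\nabla s .
\]
From this, a Gronwall-type estimate on $|w|+|w^\nabla|$ yields $w\equiv 0$. Alternatively, induction over scattered points combined with standard ODE uniqueness on dense intervals would also work.
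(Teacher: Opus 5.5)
Your proof is correct. Reality, positivity and orthogonality follow the paper's argument, and your complexification step is a genuine addition: the paper works in the real space $H$ and simply reads $\lambda\in\mathbb{R}^+$ off $\lambda\|y\|^2=\langle Ay,y\rangle>0$.

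For simplicity you take a different route. The paper sets $W=y_1y_2^\nabla-y_2y_1^\nabla$ and checks $W^\Delta=0$ separately at right-dense and right-scattered points, using $y^\Delta=y^\nabla\circ\sigma$ and $y^\sigma=y+\mu y^\Delta$. It then observes $W(a)=0$ and declares $y_1,y_2$ dependent. That computation is algebraic and yields a conserved quantity. However, $W(a)=0$ is automatic from the Dirichlet condition, so $W\equiv0$ does not by itself give $y_2=cy_1$. One needs either a quotient argument, which breaks down where $y_1$ vanishes, or exactly the Cauchy-problem uniqueness you invoke; the paper leaves that step implicit. Your combination $w=z^\nabla(a)y-y^\nabla(a)z$ isolates this analytic input and makes it explicit, at the price of proving uniqueness on an arbitrary time scale.

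Some notes on carrying that out:
\begin{itemize}
\item Your Gronwall route works once the two measures are reconciled. Since $\int_a^t|w^\nabla|\,\nabla s=\int_a^t|w^\nabla(\sigma(s))|\,\Delta s$ and $w^\nabla(\sigma(s))=w^\nabla(s)-\lambda\mu(s)w(s)$, the function $v=|w|+|w^\nabla|$ satisfies $v(t)\le\int_a^t\bigl(1+|\lambda|(1+\mu(s))\bigr)v(s)\,\Delta s$. The $\Delta$-Gronwall inequality then gives $v\equiv0$.
\item The alternative of induction over scattered points plus ODE uniqueness on dense intervals is too coarse when the dense part of $\mathbb{T}$ contains no intervals, as for Cantor-type sets. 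There you would need the time-scale induction principle, or just the Gronwall route.
\item If $a$ is right-scattered, $\lim_{t\to a^+}y^\nabla(t)$ is not meaningful, since no points of $\mathbb{T}$ approach $a$ from the right. Take the Cauchy data at $\sigma(a)$ instead: $w=z^\nabla(\sigma(a))y-y^\nabla(\sigma(a))z$ vanishes at $a$. Because $w(\sigma(a))=w(a)+\mu(a)w^\nabla(\sigma(a))$, it also vanishes at $\sigma(a)$, and the uniqueness argument runs forward from there.
\end{itemize}
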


\begin{proof}
Let $\lambda$ be an eigenvalue with eigenfunction $y \neq 0$. Since $A$ is symmetric and positive on the real Hilbert space $H$, the eigenvalue $\lambda$ satisfies
$\lambda \|y\|^2 = \langle Ay, y \rangle > 0,$
which implies $\lambda \in \mathbb{R}^+$.

For orthogonality, let $Ay_1 = \lambda_1 y_1$ and $Ay_2 = \lambda_2 y_2$ with $\lambda_1 \neq \lambda_2$. Then by \eqref{eq:1D-symmetric}
$(\lambda_1 - \lambda_2) \langle y_1, y_2 \rangle = \langle Ay_1, y_2 \rangle - \langle y_1, Ay_2 \rangle = 0,$
and since $\lambda_1 \neq \lambda_2$, it follows that $\langle y_1, y_2 \rangle = 0$.

Finally, we prove all the eigenvalues are simple. Let $y_1$ and $y_2$ be solutions of \eqref{eq:1D-evp}. Define the Wronskian
$W(t) := y_1(t) y_2^{\nabla}(t) - y_2(t) y_1^{\nabla}(t).$
We show that $W$ is constant on $(a,b]\cap\mathbb{T}$. Using the product rule for delta derivatives, we compute
\begin{align}
\label{eq:wronskian}
W^\Delta = y_1^\Delta y_2^\nabla + y_1^\sigma y_2^{\nabla\Delta} - y_2^\Delta y_1^\nabla - y_2^\sigma y_1^{\nabla\Delta}.
\end{align}
By \eqref{eq:nabla-from-delta}--\eqref{eq:delta-from-nabla}, we have $y_i^\Delta = y_i^{\nabla^ \sigma}$. Then, substituting $y_i^\Delta = y_i^{\nabla^ \sigma}$ and $y_i^{\nabla\Delta} = -\lambda y_i$ into \eqref{eq:wronskian}, we see
\[
W^\Delta = y_1^{\nabla^ \sigma} y_2^\nabla - y_2^{\nabla^ \sigma} y_1^\nabla - \lambda(y_1^\sigma y_2 - y_2^\sigma y_1).
\]
If $t$ is right-dense, then $\sigma(t) = t$ and thus $W^\Delta =0$.
If $t$ is right-scattered then $y_i^\sigma = y_i + \mu y_i^\Delta$ and upon simplifying, the terms again cancel, yielding $W^\Delta = 0$.
Hence $W$ is constant on $(a,b]\cap\mathbb{T}$. Since $y_1(a) = y_2(a) = 0$, we have $W(a) = 0$, consequently $W \equiv 0$ on $(a,b]\cap\mathbb{T}$. Therefore $y_1$ and $y_2$ are linearly dependent.
\end{proof}
%\shalmalirmk{Stopped here 4:15 PM Central Jan 1st}
%\shalmalirmk{Done editing Jan 9 10:41 AM}
We now prove that the operator $A$ has trivial kernel and construct its inverse using a Green's function.

\begin{lemma}\label{lem:ker-A}
$\ker(A) = \{0\}$.
\end{lemma}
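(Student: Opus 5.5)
The plan is to show directly that $Ay=0$ with $y\in D$ forces $y\equiv 0$. I will use two routes: the quick one goes through Theorem~\ref{thm:1D-self-adjoint}, and the second is an elementary argument that also sets up the Green's function construction that follows.

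\emph{First route.} Let $y\in D$ with $Ay=0$. Then $\langle Ay,y\rangle=0$. By \eqref{eq:1D-positive},
\[
\int_a^b \bigl(y^\nabla(t)\bigr)^2\,\nabla t=0 .
\]
Theorem~\ref{thm:1D-self-adjoint} states that $\langle Ay,y\rangle>0$ for every nonzero $y\in D$. Hence $y=0$ in $H$, and so $\ker(A)=\{0\}$.

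\emph{Second route.} Since the positivity argument passed quickly from ``$y^\nabla=0$ almost everywhere'' to ``$y$ is constant'', I would also give a direct argument that avoids this step.
\begin{enumerate}
\item From $y^{\nabla\Delta}=0$ on $[a,b)\cap\mathbb{T}$, the function $y^\nabla$ has zero $\Delta$-derivative. Condition (ii) of $D$ makes $y^\nabla$ continuous on $(a,b]$ with a finite limit at $a$, so the constancy theorem for $\Delta$-derivatives gives $y^\nabla\equiv c$ on $(a,b]\cap\mathbb{T}$.
\item Because $y^\nabla$ is continuous and $y$ is continuous, $\nabla$-integration from $a$ yields $y(t)=y(a)+c(t-a)$. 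With $y(a)=0$ this is $y(t)=c(t-a)$.
\item The condition $y(b)=0$ then gives $c(b-a)=0$. Since $b>a$, we get $c=0$, and therefore $y\equiv 0$ on $[a,b]\cap\mathbb{T}$, in particular in $H$.
\end{enumerate}

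\emph{Main obstacle.} The delicate point is the mild regularity involved in passing between $\nabla$- and $\Delta$-derivatives near the endpoints. Constancy of $y^\nabla$ requires $y^{\nabla\Delta}=0$ at every point, not only almost everywhere; this holds because $Ay=0$ is an identity of functions on $[a,b)\cap\mathbb{T}$. The endpoint behaviour at $a$ is controlled by the limits assumed in the definition of $D$.

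An alternative is the Wronskian argument of Corollary~\ref{cor:eigenvalue-properties} with $\lambda=0$. It shows that any kernel element is proportional to the solution $t-a$ of the initial value problem, and this solution does not vanish at $b$.
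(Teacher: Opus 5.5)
Your first route is essentially the paper's proof. The paper also feeds $Ay=0$ into \eqref{eq:1D-positive}, concludes $y^\nabla(t)=0$ for every $t\in(a,b]\cap\mathbb{T}$, and finishes with $y(a)=0$. The only difference is that it writes out that last step instead of quoting the positivity clause of Theorem~\ref{thm:1D-self-adjoint}. The step you were uneasy about is harmless, because $y^\nabla$ is continuous on $(a,b]$ by condition (ii) of $D$. A left-scattered $t$ carries $\nabla$-mass $\nu(t)>0$. At a left-dense $t$ with $y^\nabla(t)\neq 0$, continuity gives some $s\in(a,t)\cap\mathbb{T}$ such that $(y^\nabla)^2$ is bounded below on $(s,t]$, a set of $\nabla$-measure $t-s>0$. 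So the vanishing integral forces $y^\nabla\equiv 0$ pointwise, and the $\nabla$-constancy theorem does the rest.

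Your second route is genuinely different. It uses $y^{\nabla\Delta}=0$ directly, through two constancy theorems and both boundary values, and never integrates by parts. That buys independence from \eqref{eq:ibp1}--\eqref{eq:ibp2}. As quoted, those formulas require $y^{\nabla\Delta}$ to be continuous and bounded, which membership in $D$ does not guarantee. It also exhibits the null solution $c(t-a)$ that underlies the Green's function.

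Its cost is the pointwise issue you flag, and your justification there does not hold. By Remark~\ref{rem:H-completeness}, $H$ is a closed subspace of $L^2([a,b),\mu_\Delta)$, so $Ay=0$ in $H$ only gives $y^{\nabla\Delta}=0$ $\mu_\Delta$-almost everywhere. That is pointwise at right-scattered points, which are atoms, but not at right-dense points. The first route never meets this issue, since it uses only the integral identity $\langle Ay,y\rangle=0$.

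To close the gap in the second route, invoke a stronger constancy fact: a continuous function that is $\Delta$-differentiable at every point of $[s,t)\cap\mathbb{T}$, with derivative zero $\mu_\Delta$-a.e., is constant. This is the time-scale analogue of the classical fact that an everywhere-differentiable function whose derivative vanishes almost everywhere is constant. It can be proved, for example, by a Lusin-type argument showing that the image of $[s,t]\cap\mathbb{T}$, which is an interval, is Lebesgue-null. With that inserted, your second route is a complete and somewhat more self-contained proof than the paper's.
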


\begin{proof}
If $y \in D$ and $Ay = 0$, then by \eqref{eq:1D-positive}, $0 = \langle Ay, y \rangle = \displaystyle\int_a^b \left( y^\nabla(t) \right)^2 \nabla t,$
which implies $y^\nabla(t) = 0$ for all $t \in (a,b]\cap\mathbb{T}$. Hence the boundary condition $y(a) = 0$ gives $y \equiv 0$ on $[a,b]\cap\mathbb{T}$.
\end{proof}

\noindent Since $\ker(A) = \{0\}$, the inverse operator $A^{-1}$ exists. To give an explicit representation, we introduce the Green's function.

\begin{definition}\label{def:Green}
The Green's function for problem \eqref{eq:1D-evp} is defined by
\begin{equation}\label{eq:Green-1D}
G(t,s) = \frac{1}{b-a} \begin{cases}
(t-a)(b-s) & \text{if } t \leq s,\\
(s-a)(b-t) & \text{if } t \geq s.
\end{cases}
\end{equation}
\end{definition}
\begin{comment}

{\color{violet} Tom: we don't have the $\delta$ here -- it is maybe confusing for readers to mention it? We say the ``defining property" of the Green's function below, but it's just defined by the piecewise function. That will confuse readers as well!
\begin{remark}\label{rem:Green-interpretation}
Although the explicit formula for the Green's function coincides with the classical continuous case, its role here is as a kernel with respect to the delta integral on time scales. The identity $-G^{\nabla_t\Delta_t} = \delta$ is understood in the time-scale distributional sense, not pointwise.
\end{remark}
}
\end{comment}
We now demonstrate how $G$ can be used to express $A^{-1}$.
\begin{theorem}\label{thm:A-inverse}
Let $G$ be defined as in \eqref{eq:Green-1D}. Then the inverse operator $A^{-1}\colon H \to D$ is
\begin{equation}\label{eq:A-inverse-rep}
\left(A^{-1}u\right)(t) = \int_a^b G(t,s) u(s) \Delta s.
\end{equation}
Moreover, $A^{-1}$ is symmetric and compact on $H$.
\end{theorem}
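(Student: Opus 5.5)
Define $(Bu)(t) = \int_a^b G(t,s)u(s)\,\Delta s$ for $u \in H$. The plan is to show that $B$ maps $H$ into $D$ and satisfies $ABu = u$. Combined with Lemma~\ref{lem:ker-A}, this gives $B = A^{-1}$: we get $A(BAy - y) = 0$ for $y \in D$, so $BAy = y$.

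\textbf{Step 1: differentiate $Bu$.} Split the integral at $t$:
\[
(Bu)(t) = \frac{b-t}{b-a}\int_a^t (s-a)u(s)\,\Delta s + \frac{t-a}{b-a}\int_t^b (b-s)u(s)\,\Delta s .
\]
Clearly $Bu(a) = Bu(b) = 0$, since $G(a,s) = G(b,s) = 0$. Continuity in $t$ follows because $t \mapsto \int_a^t h\,\Delta s$ is continuous for $h \in L^1(\mu_\Delta)$, and $H \subset L^1$ on the bounded interval by Cauchy--Schwarz.

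Next compute the $\nabla$-derivative using the nabla product rule. The key fact is $\big(\int_a^t h\,\Delta s\big)^\nabla = h(\rho(t))$ at left-scattered points, and $=h(t)$ at left-dense points of continuity. At left-scattered $t$, the product rule on $(b-t)\cdot\int_a^t$ produces terms evaluated at $\rho(t)$, and these cancel. I expect the outcome
\[
(Bu)^\nabla(t) = -\frac{1}{b-a}\int_a^{t}(s-a)u(s)\,\Delta s + \frac{1}{b-a}\int_{t}^b (b-s)u(s)\,\Delta s .
\]
The cancellation should be checked by writing the difference quotient over $[\rho(t), t]$ directly. That quotient is
\[
\frac{1}{\nu}\Big[-\nu\int_a^{\rho}(s-a)u + (b-t)(\rho-a)u(\rho)\nu + \nu\int_{\rho}^b (b-s)u - (t-a)(b-\rho)u(\rho)\nu\Big]\frac{1}{b-a},
\]
and the $u(\rho)$ terms combine with $\int_\rho^t$ corrections to give the displayed formula. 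This is a short explicit computation.

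Applying the $\Delta$-derivative to the displayed formula gives
\[
(Bu)^{\nabla\Delta}(t) = -\frac{(t-a)u(t) + (b-t)u(t)}{b-a} = -u(t),
\]
valid at every point where $u$ is right-continuous or $t$ is right-scattered. So $ABu = u$ in $H$.

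\textbf{Main obstacle.} The delicate point is that $u \in H$ is only measurable. The derivatives above hold pointwise only $\mu_\Delta$-almost everywhere, and condition (ii) of $D$ asks for \emph{continuous} $\nabla$-differentiability. I would handle this in two stages. First prove $ABu = u$ and $Bu \in D$ for continuous $u$. Then extend to general $u$ using density of continuous functions in $H$ (Remark~\ref{rem:D-dense}) and the boundedness of $B$ established below. This effectively treats $D$ in its natural closure sense, as Guseinov does in \cite{Guseinov-EF}. The one-sided limits $(Bu)^\nabla(a^+)$ and $(Bu)^\nabla(b^-)$ exist because the formula for $(Bu)^\nabla$ is continuous in $t$.

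\textbf{Symmetry and compactness.} Symmetry of $B$ follows from $G(t,s) = G(s,t)$ together with Fubini \eqref{thm:Fubini}:
\[
\langle Bu, v\rangle = \iint G(t,s)u(s)v(t) = \langle u, Bv\rangle .
\]
Fubini applies since $G$ is bounded and $uv$ is integrable on the rectangle. For compactness, $G$ is bounded and continuous on the compact square, so $G \in L^2(\mu_\Delta \otimes \mu_\Delta)$. Hence $B$ is a Hilbert--Schmidt integral operator, and therefore bounded with $\|B\| \le \|G\|_{L^2}$ and compact. Alternatively, compactness can be shown directly by approximating $B$ in operator norm by finite-rank operators obtained from simple-function approximations of $G$.
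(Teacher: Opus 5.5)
Your derivative computation matches the paper's. With $I_1(t)=\int_a^t(s-a)u(s)\,\Delta s$ and $I_2(t)=\int_t^b(b-s)u(s)\,\Delta s$, both arguments reach $(b-a)(Bu)^\nabla=I_2-I_1$ and then $(Bu)^{\nabla\Delta}=-u$. Your symmetry argument from $G(t,s)=G(s,t)$ is also the paper's. (Note that \eqref{thm:Fubini} as quoted needs a bounded integrand, so you need the Lebesgue Fubini--Tonelli theorem for the product $\Delta$-measure.)

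The first real difference is compactness. You use that an $L^2$ kernel gives a Hilbert--Schmidt operator. The paper instead shows that bounded sets map to uniformly bounded, equicontinuous subsets of $C([a,b]\cap\mathbb{T})$ and applies Arzel\`a--Ascoli. Yours is shorter and stays in $H$; the paper's gives compactness in the uniform norm, which matches the continuity built into $D$.

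The second difference is more substantive: you confront the fact that $u\in H$ is only measurable, which the paper's proof does not. The paper uses $I_1^\Delta(t)=(t-a)u(t)$ pointwise, and this needs continuity of $u$ at right-dense $t$. Your caution is justified. Take $\mathbb{T}=\mathbb{R}$ and $u=\mathbf{1}_{[c,b)}$. No $y$ in $D$, as literally defined, satisfies $Ay=u$. An everywhere differentiable $y^\nabla$ with integrable derivative is absolutely continuous, so $y^\nabla(t)=y^\nabla(a)-\int_a^t u$, which has a corner at $c$. So condition (iii) must be weakened somehow.

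Your way of weakening it has two soft spots.

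\textbf{The density detour is unnecessary and gives less.} Condition (ii) is not where the trouble lies. The difference quotient you sketched for left-scattered $t$ works for arbitrary $s<t$:
\[
\frac{(Bu)(t)-(Bu)(s)}{t-s}=\frac{I_2(s)-I_1(s)}{b-a}+\frac{1}{t-s}\int_s^t (r-t)\,u(r)\,\Delta r ,
\]
and the last term is at most $\int_s^t|u|\,\Delta r\to 0$; the case $s>t$ is symmetric. Hence for \emph{every} $u\in H$ and every $t\in(a,b]$,
\[
(Bu)^\nabla(t)=\frac{1}{b-a}\int_a^b (b-r)\,u(r)\,\Delta r-\int_a^t u(r)\,\Delta r .
\]
So $Bu$ satisfies (i) and (ii) literally. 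It satisfies (iii) with $(Bu)^\nabla$ absolutely continuous, and $(Bu)^{\nabla\Delta}=-u$ holds at every right-scattered point and, by Lebesgue differentiation, at $\mu_\Delta$-almost every right-dense point. Your ``right-continuous'' should read ``continuous'' at points that are dense on both sides. By contrast, your stage two only places $(Bu,u)$ in the closure of the graph of $A$ and says nothing about differentiability of $Bu$.

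\textbf{The identification step does not close.} The claim ``$A(BAy-y)=0$, so $BAy=y$'' needs $BAy\in D$. Stage one supplies this only when $Ay$ is continuous. In the closure reading you would instead need $\ker\overline{A}=\{0\}$, which is not what Lemma~\ref{lem:ker-A} states. Argue instead as follows. For $y\in D$ and continuous $v$,
\[
\langle BAy,v\rangle=\langle Ay,Bv\rangle=\langle y,ABv\rangle=\langle y,v\rangle,
\]
using symmetry of $B$ and Theorem~\ref{thm:1D-self-adjoint}. This is legitimate because $Bv\in D$ by stage one. Then conclude $BAy=y$ by density of continuous functions in $H$.
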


\begin{proof}
Let $u \in H$ and define $y(t) := \displaystyle\int_a^b G(t,s) u(s) \, \Delta s$. From \eqref{eq:Green-1D}, $G(a,s)=G(b,s)=0$ for all $s$, hence $y(a)=y(b)=0$. For $t\in(a,b)\cap\mathbb{T}$,
$$y(t) = \int_a^t G(t,s)u(s)\,\Delta s + \int_t^b G(t,s)u(s)\,\Delta s = \frac{b-t}{b-a}\int_a^t (s-a)u(s)\,\Delta s + \frac{t-a}{b-a}\int_t^b (b-s)u(s)\,\Delta s.$$
Define $I_1(t)=\displaystyle\int_a^t (s-a)u(s)\,\Delta s$ and $I_2(t)=\displaystyle\int_t^b (b-s)u(s)\,\Delta s$, so that $y(t)=\dfrac{1}{b-a}\Big[(b-t)I_1(t)+(t-a)I_2(t)\Big]$. Then $I_1^\Delta(t)=(t-a)u(t)$ and $I_2^\Delta(t)=-(b-t)u(t)$, hence after the product rule and simplifying,
\begin{equation}\label{eq:y-delta-correct}
(b-a)y^\Delta(t)=I_2(t)-I_1(t)-(b-a)\mu(t)u(t).
\end{equation}
Applying \eqref{eq:nabla-from-delta}, \eqref{eq:y-delta-correct} becomes
$$(b-a)y^\nabla(t) = I_2(\rho(t))-I_1(\rho(t))-(b-a)\mu(\rho(t))u(\rho(t)).$$
To complete the verification, we compute $y^{\nabla\Delta}$ directly. By \eqref{eq:nabla-from-delta}, $I_1^\nabla(t) = I_1^\Delta(\rho(t)) = (\rho(t)-a)u(\rho(t))$ and $I_2^\nabla(t) = I_2^\Delta(\rho(t)) = -(b-\rho(t))u(\rho(t))$. By the product rule,
\begin{align*}
[(b-t)I_1(t)]^\nabla &= -I_1(t) + (b-\rho(t))(\rho(t)-a)u(\rho(t)), \\
[(t-a)I_2(t)]^\nabla &= I_2(t) - (\rho(t)-a)(b-\rho(t))u(\rho(t)).
\end{align*}
Adding these expressions reveals $(b-a)y^\nabla(t) = I_2(t) - I_1(t)$. Now we compute
\[y^{\nabla\Delta}(t) = \dfrac{I_2^\Delta(t) - I_1^\Delta(t)}{b-a}=\frac{-(b-t)u(t) - (t-a)u(t)}{b-a}=-u(t).\]
Hence $-y^{\nabla\Delta}(t) = u(t)$ for $t \in (a,b)$, and thus $y = A^{-1}u$.

\begin{comment}
To complete the verification, we compute $y^{\nabla\Delta}$ directly. By \eqref{eq:nabla-from-delta}, $y^\nabla(t) = y^\Delta(\rho(t))$, so from \eqref{eq:y-delta-correct},
\begin{equation}\label{eq:y-nabla-formula}
(b-a)y^\nabla(t) = I_2(\rho(t)) - I_1(\rho(t)) - (b-a)\mu(\rho(t))u(\rho(t)).
\end{equation}
Since $\rho(\sigma(t)) = t$ for $t \in (a,b) \cap \mathbb{T}$, 
%\shalmalirmk{this is not true}
we have
\[
(b-a)y^\nabla(\sigma(t)) = I_2(t) - I_1(t) - (b-a)\mu(t)u(t).
\]
Computing the difference and using the fundamental theorem of calculus on time scales,
\[
I_1(t) - I_1(\rho(t)) = \nu(t)(\rho(t)-a)u(\rho(t)), \quad I_2(t) - I_2(\rho(t)) = -\nu(t)(b-\rho(t))u(\rho(t)).
\]
Substituting and simplifying,
\[
y^\nabla(\sigma(t)) - y^\nabla(t) = -\nu(t)u(\rho(t)) - \mu(t)u(t) + \mu(\rho(t))u(\rho(t)).
\]
Using the identity $\mu(\rho(t)) = \nu(t)$, division by $\mu(t)$ yields $y^{\nabla\Delta}(t) = -u(t)$. Hence $-y^{\nabla\Delta}(t) = u(t)$ for $t \in (a,b)$, and thus $y = A^{-1}u$.

To complete the verification, we use the {\color{violet} (Tom: see above) defining property} of the Green's function: $G$ satisfies the homogeneous equation $-G^{\nabla_t\Delta_t} = 0$ in the regions $t < s$ and $t > s$, is continuous at $t = s$, and has a jump discontinuity in $G^{\nabla}$ at $t = s$ of magnitude $-1$, i.e.
$$G^\nabla(s^+, s) - G^\nabla(s^-, s) = -\frac{s-a}{b-a} - \frac{b-s}{b-a} = -1.$$
{\color{violet} Tom: it's not clear to me right here why this follows from (3.8)} Hence $-y^{\nabla\Delta}(t)=u(t)$ for $t\in(a,b)$, and thus $y=A^{-1}u$.
\end{comment}
\smallskip

For compactness, let $M=\max\limits_{(t,s)\in\left([a,b]\cap\mathbb{T}\right)^2}|G(t,s)|$. By the Cauchy--Schwarz inequality,
$$|(A^{-1}u)(t)|^2 \le \left(\int_a^b |G(t,s)|^2\,\Delta s\right)\|u\|^2 \le M^2(b-a)\|u\|^2.$$
Hence $\|A^{-1}u\|_\infty\le M\sqrt{b-a}\,\|u\|$. Moreover, for $t_1,t_2\in[a,b]$,
$$\Big|(A^{-1}u)(t_1)-(A^{-1}u)(t_2)\Big| \le \sqrt{b-a}\,\|u\| \left(\int_a^b |G(t_1,s)-G(t_2,s)|^2\,\Delta s\right)^{1/2}.$$
Since $G$ is uniformly continuous on $\left([a,b]\cap\mathbb{T}\right)^2$, the right-hand side tends to $0$ uniformly as $t_1\to t_2$. Thus $A^{-1}$ maps bounded sets of $H$ into equicontinuous, uniformly bounded sets in $C([a,b]\cap\mathbb{T})$. By the Arzel\`a--Ascoli theorem, $A^{-1}$ is compact on $H$. Since $G(t,s)=G(s,t)$ by \eqref{eq:Green-1D}, we have $\langle A^{-1}u, v\rangle = \langle u, A^{-1}v\rangle$, so $A^{-1}$ is symmetric.
\end{proof}

Next we employ the Theorem~\ref{thm:HS}.

\begin{theorem}\label{thm:1D-expansion}
For the eigenvalue problem \eqref{eq:1D-evp}, there exists an orthonormal system $\{\phi_k\}_{k=1}^N$ of eigenfunctions corresponding to eigenvalues $\{\lambda_k\}_{k=1}^N$, where $N = \dim H$. Each eigenvalue $\lambda_k$ is positive and simple. The eigenvalues can be arranged as
$0 < \lambda_1 < \lambda_2 < \lambda_3 < \cdots$
The system $\left\{\phi_k\right\}_{k=1}^N$ forms an orthonormal basis for the Hilbert space $H$. Therefore, any function $f \in H$ can be expanded as
\begin{equation}
\label{eq:1D-expansion}
f(t) = \sum_{k=1}^{N} c_k \phi_k(t),
\end{equation}
where the Fourier coefficients are given by
$c_k = \langle f, \phi_k \rangle =\displaystyle \int_a^b f(t) \phi_k(t) \, \Delta t.$
When $N < \infty$, the sum \eqref{eq:1D-expansion} is finite and the equality is exact. When $N = \infty$, the series converges to $f$ in the norm of $H$, i.e.
$\displaystyle\lim_{m \to \infty} \left\| f - \displaystyle\sum_{k=1}^{m} c_k \phi_k \right\| = 0$. Moreover, Parseval's equality $\displaystyle\int_a^b f^2(t) \Delta t = \sum_{k=1}^{N} c_k^2$ holds.
\end{theorem}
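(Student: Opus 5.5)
The plan is to apply the Hilbert--Schmidt theorem (Theorem~\ref{thm:HS}) to $T = A^{-1}\colon H \to H$ and then translate its spectral data back to $A$. By Theorem~\ref{thm:A-inverse}, $T$ is given by the kernel $G$ in \eqref{eq:A-inverse-rep}, and it is symmetric and compact. Boundedness follows from the estimate $\|A^{-1}u\|_\infty \le M\sqrt{b-a}\,\|u\|$ obtained there, which gives $\|A^{-1}u\| \le M(b-a)\|u\|$. Since $T$ is bounded and symmetric on all of $H$, it is self-adjoint.

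\textbf{Kernel and rank.} First I show $\ker T = \{0\}$. If $A^{-1}u = 0$, then $u = A(A^{-1}u) = 0$, because Theorem~\ref{thm:A-inverse} shows $-(A^{-1}u)^{\nabla\Delta} = u$ on $(a,b)\cap\mathbb{T}$. One caveat: when $a$ is right-scattered, the value $u(a)$ carries positive $\Delta$-measure and is not determined by the equation on the open interval. The definition of $H$ forces $u(a)=0$ in that case, so equality holds in $H$. Hence $\ker T=\{0\}$, the rank of $T$ equals $\dim H = N$, and Theorem~\ref{thm:HS} yields an orthonormal basis $\{\phi_k\}_{k=1}^N$ of $H$ with $T\phi_k = \mu_k\phi_k$ and $\mu_k \ne 0$.

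\textbf{Transfer to $A$.} Since $\phi_k = \mu_k^{-1}T\phi_k$ lies in the range of $T$, which is contained in $D$, we get $A\phi_k = \lambda_k\phi_k$ with $\lambda_k = 1/\mu_k$. Corollary~\ref{cor:eigenvalue-properties} then gives that each $\lambda_k$ is real, positive and simple. Simplicity means each eigenspace of $T$ is one-dimensional, so the $\lambda_k$ are pairwise distinct. Because the $|\mu_k|$ are non-increasing with $\mu_k>0$, the $\lambda_k$ are non-decreasing, hence strictly increasing: $0<\lambda_1<\lambda_2<\cdots$. When $N=\infty$, compactness gives $\mu_k\to0$, so $\lambda_k\to\infty$.

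\textbf{Expansion and Parseval.} The expansion \eqref{eq:1D-expansion} with $c_k=\langle f,\phi_k\rangle$ is the standard consequence of $\{\phi_k\}$ being an orthonormal basis. If $N<\infty$ the sum is exact. If $N=\infty$ the partial sums converge in norm, and Parseval's identity follows from $\|f\|^2 = \sum c_k^2$ for orthonormal bases.

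\textbf{Main obstacle.} The delicate step is the identification $\operatorname{rank} T = \dim H$ together with the claim $AA^{-1}=I$ in $H$. This requires care about the point $a$ when it is right-scattered, and about the value at $\rho(b)$, which also carries $\Delta$-measure. At that point the equation $-y^{\nabla\Delta}=u$ must hold on $[a,b)\setminus\{a\}$. I would check this directly from the computation in Theorem~\ref{thm:A-inverse}, which holds for all $t\in(a,b)\cap\mathbb{T}$, including $t=\rho(b)$ when $\rho(b)<b$.
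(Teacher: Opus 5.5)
Your proposal follows the paper's proof essentially step for step. You apply the Hilbert--Schmidt theorem to the compact symmetric operator $A^{-1}$ and deduce $\ker A^{-1}=\{0\}$ from $A(A^{-1}u)=u$. You then transfer the eigenpairs back to $A$ via $\lambda_k=1/\widehat{\lambda}_k$, use Corollary~\ref{cor:eigenvalue-properties} for positivity and simplicity (hence the strict ordering), and read off the expansion and Parseval's identity from the orthonormal basis. Your extra checks make explicit points the paper passes over silently without changing the argument: boundedness of $A^{-1}$ from the sup-norm estimate, $\phi_k\in\operatorname{ran}A^{-1}\subset D$ before applying $A$, and the role of $u(a)=0$ when $a$ is right-scattered.
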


\begin{proof}
The eigenvalue problem $Ay = \lambda y$ with $y \neq 0$ is equivalent to $A^{-1}y = \widehat{\lambda} y$ where $\widehat{\lambda} = 1/\lambda$. By Theorem~\ref{thm:A-inverse}, $A^{-1}$ is compact and symmetric on $H$. By Lemma~\ref{lem:ker-A}, $\ker(A) = \{0\}$, and hence $\ker(A^{-1}) = \{0\}$: if $A^{-1}y = 0$ for some $y \in H$, then applying $A$ to both sides gives $y = A(A^{-1}y) = 0$.

By Theorem~\ref{thm:HS}, there exists an orthonormal system $\{\phi_k\}_{k=1}^N$ of eigenfunctions of $A^{-1}$ corresponding to nonzero eigenvalues $\left\{\widehat{\lambda}_k\right\}_{k=1}^N$, where $N = \dim H$. Since $\ker(A^{-1}) = \{0\}$, this system forms an orthonormal basis for $H$.

Each $\phi_k$ satisfies $A^{-1}\phi_k = \widehat{\lambda}_k \phi_k$ with $\widehat{\lambda}_k \neq 0$. Applying $A$ to both sides yields $\phi_k = \widehat{\lambda}_k A\phi_k$, hence $A\phi_k = \lambda_k \phi_k$ where $\lambda_k = 1/\widehat{\lambda}_k$. By Corollary~\ref{cor:eigenvalue-properties}, $\lambda_k > 0$ for all $k$, and $\lambda_j \neq \lambda_k$ for $j \neq k$. Since $|\widehat{\lambda}_k|$ is monotonically non-increasing with $\widehat{\lambda}_k \to 0$ as $k \to \infty$ when $N = \infty$, the eigenvalues satisfy $0 < \lambda_1 < \lambda_2 < \lambda_3 < \cdots$. Since $\{\phi_k\}_{k=1}^N$ is an orthonormal basis for $H$, any $f \in H$ can be expanded as $f = \displaystyle\sum_{k=1}^N c_k \phi_k$ where $c_k = \langle f, \phi_k \rangle$, and Parseval's equality $\|f\|^2 = \displaystyle\sum_{k=1}^N c_k^2$ holds. When $N = \infty$ (countable infinity), convergence follows from orthonormality because $\displaystyle\sum_{k=1}^\infty c_k^2 = \|f\|^2 < \infty$ implies that
$\left\| f - \displaystyle\sum_{k=1}^m c_k \phi_k \right\|^2 = \|f\|^2 - \displaystyle\sum_{k=1}^m c_k^2 \to 0$
as $m \to \infty$.
%\shalmalirmk{Shalmali to Tom: You're right, N and n are different. N = dim H is the total dimension of the Hilbert space, while n is the partial sum index. I'll change the partial sum index to m to avoid confusion. And yes, the eigenvalues are at most countable. will clarify this.}
\end{proof}

%\shalmalirmk{Shalmali to Tom: Good point. The Hilbert space H is separable, so it has at most countably many orthonormal basis elements, hence at most countably many eigenvalues. The cardinality of the time scale T itself doesn't affect this even when $T = [a,b]$ (uncountable), $L^2([a,b])$ is separable with countable dimension. I'll clarify "infinitely many" to "countably many" to be precise.}

\begin{remark}\label{rem:dimension}
The number of eigenvalues and eigenfunctions of problem \eqref{eq:1D-evp} equals $\dim H$. When $(a,b) \cap \mathbb{T}$ contains finitely many points, say $m$ points, then $H \cong \mathbb{R}^m$ and problem \eqref{eq:1D-evp} reduces to a matrix eigenvalue problem with exactly $m$ eigenvalues. When $(a,b) \cap \mathbb{T}$ is infinite, the space $H$ is a separable infinite-dimensional Hilbert space, so $\dim H = \aleph_0$, and problem \eqref{eq:1D-evp} has countably many eigenvalues $0 < \lambda_1 < \lambda_2 < \cdots$ with $\lambda_k \to \infty$. This includes both the continuous case $\mathbb{T} = \mathbb{R}$, where $H = L^2([a,b))$, and the discrete case $\mathbb{T} = \mathbb{Z}$. The eigenfunction expansion \eqref{eq:1D-expansion} is then an infinite series converging in the norm of $H$.
\end{remark}

The following estimate provides a lower bound for the first eigenvalue in terms of the interval length.

\begin{theorem}\label{thm:lambda1-bound}
The first eigenvalue $\lambda_1$ of problem \eqref{eq:1D-evp} satisfies
$\lambda_1 \geq \dfrac{4}{(b-a)^2}$.
\end{theorem}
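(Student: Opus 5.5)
Let $\phi$ be an eigenfunction for $\lambda_1$. We will show that $\lambda_1\|\phi\|^2 = \int_a^b (\phi^\nabla)^2\nabla t$ (from \eqref{eq:1D-positive}) controls $\phi$ pointwise. This gives a Lyapunov-type inequality. The cleanest route we see avoids the Dirichlet form and instead goes through the Green's function representation of Theorem~\ref{thm:A-inverse}. Since $A\phi=\lambda_1\phi$, we have $\phi = \lambda_1 A^{-1}\phi$, that is,
\[
\phi(t) = \lambda_1\int_a^b G(t,s)\phi(s)\,\Delta s .
\]
Choose a point $t_0\in[a,b]\cap\mathbb{T}$ where $|\phi|$ attains its maximum. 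This maximum exists because $A^{-1}$ maps into continuous functions on the compact set $[a,b]\cap\mathbb{T}$ (as shown in the compactness part of Theorem~\ref{thm:A-inverse}), and $\phi\ne0$, so $\|\phi\|_\infty>0$.

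The key step is the elementary bound on the Green's function. From \eqref{eq:Green-1D}, $0\le G(t,s)\le G(s,s)=\frac{(s-a)(b-s)}{b-a}\le \frac{b-a}{4}$, since $(s-a)(b-s)\le (b-a)^2/4$ by AM--GM. We then estimate
\[
\|\phi\|_\infty = |\phi(t_0)| \le \lambda_1 \int_a^b G(t_0,s)|\phi(s)|\,\Delta s \le \lambda_1\|\phi\|_\infty \int_a^b G(t_0,s)\,\Delta s \le \lambda_1\|\phi\|_\infty\,\frac{b-a}{4}(b-a).
\]
Dividing by $\|\phi\|_\infty$ gives $\lambda_1\ge 4/(b-a)^2$. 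Here $\int_a^b \Delta s = b-a$, because the $\Delta$-measure of $[a,b)$ is $b-a$. Equivalently, $\|A^{-1}\|_{\infty\to\infty}\le \sup_t\int_a^b G(t,s)\Delta s$, and $\lambda_1^{-1}$ is an eigenvalue of $A^{-1}$, so it is bounded by this norm.

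The main point to handle carefully is the claim $\int_a^b G(t,s)\,\Delta s\le (b-a)^2/4$ for every $t$. The crude bound $G\le (b-a)/4$ already suffices, so we do not need to evaluate $\int G\,\Delta s$ exactly on a general time scale. That exact value would involve $\int (s-a)\Delta s$, which differs from the continuous value through graininess terms, so the crude bound is the safer route. We also need $\phi$ to be bounded and its maximum to be attained. This follows from $\phi=\lambda_1A^{-1}\phi$ and the uniform bound $\|A^{-1}u\|_\infty\le M\sqrt{b-a}\,\|u\|$ proved in Theorem~\ref{thm:A-inverse}, together with continuity on the compact set $[a,b]\cap\mathbb{T}$. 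If attainment were an issue, taking a supremum suffices, since the inequality chain uses only $|\phi(s)|\le\|\phi\|_\infty$.
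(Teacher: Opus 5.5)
Your proof is correct. It reaches the paper's constant through a different norm. The paper also starts from $\phi_1=\lambda_1A^{-1}\phi_1$, but it applies Cauchy--Schwarz pointwise, $|\phi_1(t)|^2\le\lambda_1^2\|\phi_1\|^2\int_a^b G(t,s)^2\,\Delta s$, and then integrates in $t$ to get $1\le\lambda_1^2\int_a^b\int_a^b G(t,s)^2\,\Delta s\,\Delta t$. In effect it bounds $\lambda_1^{-1}$ by the Hilbert--Schmidt ($L^2$) norm of the kernel. You instead bound $\lambda_1^{-1}$ by $\sup_t\int_a^b G(t,s)\,\Delta s$, the $L^\infty\to L^\infty$ norm of $A^{-1}$, in the style of a Lyapunov inequality. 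Both arguments then use the same crude bound $G\le(b-a)/4$ and arrive at exactly $4/(b-a)^2$.

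The paper's version stays entirely inside $H$. It needs no boundedness or continuity of $\phi_1$, no attained maximum, and no sign information on $G$, since only $|G|^2$ enters. Your version needs $G\ge 0$ and a bounded representative of $\phi$ that satisfies the integral identity at $t_0$. Both are available: $G\ge0$ follows from the formula, and eigenfunctions lie in $D$, hence are continuous on the compact set $[a,b]\cap\mathbb{T}$. You also correctly note that a supremum suffices if attainment were in doubt. In return, your chain is linear in $\lambda_1$ rather than squared, and slightly more transparent.

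Either argument can be sharpened by evaluating the relevant kernel quantity exactly instead of using $G\le(b-a)/4$. For instance, when $\mathbb{T}=\mathbb{R}$ one has $\int_a^b G(t,s)\,ds=(t-a)(b-t)/2$, so your route yields $\lambda_1\ge 8/(b-a)^2$.
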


\begin{proof}
Let $\phi_1$ be the normalized eigenfunction corresponding to $\lambda_1$, so that $\|\phi_1\| = 1$ and $A\phi_1 = \lambda_1 \phi_1$. From the integral equation $\phi_1 = \lambda_1 A^{-1} \phi_1$, by \eqref{eq:A-inverse-rep}, we have $\phi_1(t) = \lambda_1 \displaystyle\int_a^b G(t,s) \phi_1(s) \Delta s$. Taking absolute values and applying the Cauchy--Schwarz inequality,
\[
|\phi_1(t)|^2 \leq \lambda_1^2 \left( \int_a^b |G(t,s)|^2 \Delta s \right) \left( \int_a^b |\phi_1(s)|^2 \Delta s \right) = \lambda_1^2 \left( \int_a^b |G(t,s)|^2 \Delta s \right) \|\phi_1\|^2.
\]
Since $\|\phi_1\| = 1$, we have $|\phi_1(t)|^2 \leq \lambda_1^2 \int_a^b |G(t,s)|^2 \Delta s.$
Integrating both sides with respect to $t$,
\[
\int_a^b |\phi_1(t)|^2 \Delta t \leq \lambda_1^2 \int_a^b \int_a^b |G(t,s)|^2 \Delta s \, \Delta t.
\]
The left-hand side is exactly $\|\phi_1\|^2 = 1$, so
\begin{equation}
\label{eq:G-bound}
1 \leq \lambda_1^2 \int_a^b \int_a^b |G(t,s)|^2 \Delta s \, \Delta t.    
\end{equation}
It remains to bound the double integral. From the \eqref{eq:Green-1D}, the Green's function satisfies $0 \leq G(t,s) \leq \dfrac{(b-a)}{4}$ for all $t, s \in [a,b]$, since the maximum of $\dfrac{(t-a)(b-s)}{(b-a)}$ and $\dfrac{(s-a)(b-t)}{(b-a)}$ over the domain is achieved when $t = s = \dfrac{a+b}{2}$ (see Figure \ref{fig:Green-diagonal}), giving  $|G(t,s)| \le \dfrac{b-a}{4}$ for all $s,t \in [a,b] $. Therefore $|G(t,s)|^2 \leq \dfrac{(b-a)^2}{16}$, and
\[
\int_a^b \int_a^b |G(t,s)|^2 \Delta s \, \Delta t \leq \frac{(b-a)^2}{16} \int_a^b \int_a^b \Delta s \, \Delta t = \frac{(b-a)^2}{16} \cdot (b-a)^2 = \frac{(b-a)^4}{16}.
\]

Combining with \eqref{eq:G-bound} we obtain $1 \leq \lambda_1^2 \cdot \dfrac{(b-a)^4}{16}$, and the proof is completed upon rearranging and taking a square root.
\end{proof}
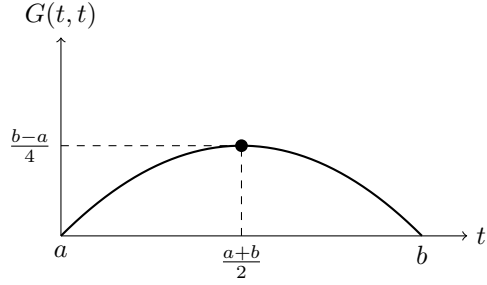
\begin{figure}[h]
\centering
\begin{tikzpicture}[scale=1.2]
  % Axes
  \draw[->] (0,0) -- (4.5,0) node[right] {$t$};
  \draw[->] (0,0) -- (0,2.2) node[above] {$G(t,t)$};
  
  % Parabola: (t-a)(b-t)/(b-a) with a=0, b=4, so max at t=2 with value 1
  \draw[thick, black, domain=0:4, samples=50] plot (\x, {(\x)*(4-\x)/4});
  
  % Labels for a and b

  \node[below] at (0,0) {$a$};
  \node[below] at (4,0) {$b$};
  
  % Midpoint and maximum
  \draw[dashed] (2,0) -- (2,1);
  \draw[dashed] (0,1) -- (2,1);
  \fill (2,1) circle (2pt);
  
  \node[below] at (2,0) {$\frac{a+b}{2}$};
  \node[left] at (0,1) {$\frac{b-a}{4}$};
\end{tikzpicture}
\caption{$G(t,t) = \dfrac{(t-a)(b-t)}{b-a}$ attains its maximum at the value $t = \dfrac{a+b}{2}$.}
\label{fig:Green-diagonal}
\end{figure}
\section{$n$-dimensional eigenvalue problem}\label{sec:nD}
In this section we extend the one-dimensional theory to $n$ dimensions using separation of variables. Recall the Hilbert space $\mathscr{H}$ and domain $\mathscr{D}$ from Definitions~\ref{def:H} and \ref{def:D} in Section \ref{sec:intro}, with inner product
\begin{equation}\label{eq:nD-inner-product}
\langle u, v \rangle = \int_{a_1}^{b_1} \int_{a_2}^{b_2} \cdots \int_{a_n}^{b_n} u(x)v(x)\,\Delta_n x_n \cdots \Delta_2 x_2\,\Delta_1 x_1,
\end{equation}
and induced norm $\|u\| = \sqrt{\langle u, u \rangle}$.

\begin{proposition}\label{prop:H-complete}
The space $\mathscr{H}$ is a real Hilbert space.
\end{proposition}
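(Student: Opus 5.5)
The plan is to identify $\mathscr{H}$ with an $L^2$ space over a measure space and then inherit completeness from the Riesz--Fischer theorem, in the same way as Remark~\ref{rem:H-completeness} handles the one-dimensional space $H$.

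Let $R=\prod_{i=1}^n [a_i,b_i)\cap\mathbb{T}_i$. By \cite{Bohner-Guseinov-Lebesgue}, the $n$-dimensional Lebesgue $\Delta$-measure $\mu_\Delta$ on $\mathbb{T}_1\times\cdots\times\mathbb{T}_n$ is obtained by Carath\'eodory extension from the premeasure on rectangles $\prod[\alpha_i,\beta_i)$ with value $\prod(\beta_i-\alpha_i)$. It is therefore a complete, countably additive measure on a $\sigma$-algebra containing $R$, and $\mu_\Delta(R)=\prod_{i}(b_i-a_i)<\infty$. The multiple $\Delta$-integral appearing in Definition~\ref{def:H} is the Lebesgue integral with respect to $\mu_\Delta$. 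This is where Fubini's theorem \eqref{thm:Fubini} and its $n$-fold iteration are used: they identify the iterated integral with the integral over $R$ for $\Delta_1\cdots\Delta_n$-measurable square-integrable functions. Consequently $\mathscr{H}$, after identifying functions that agree $\mu_\Delta$-a.e., is exactly $L^2(R,\mu_\Delta)$.

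With this identification in place, the remaining steps are routine:
\begin{itemize}
\item $\langle\cdot,\cdot\rangle$ is bilinear and symmetric.
\item Positive definiteness holds because $\langle u,u\rangle=0$ forces $u=0$ $\mu_\Delta$-a.e.
\item The pairing is finite by Cauchy--Schwarz.
\item Completeness is the standard Riesz--Fischer argument. Take a Cauchy sequence, extract a subsequence with $\|u_{k+1}-u_k\|<2^{-k}$, and apply monotone convergence to $\sum|u_{k+1}-u_k|$ to get a.e.\ convergence to a measurable limit $u$. Then Fatou's lemma gives $u\in\mathscr{H}$ and $u_k\to u$ in norm.
\end{itemize}
Note that $\mathscr{H}$ imposes no boundary condition in its definition. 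If one prefers to mirror $H$ by requiring $u=0$ at points of $R$ lying on faces $x_i=a_i$ with $a_i$ right-scattered, that set has positive measure. Evaluation there is a continuous functional, so the constraint defines a closed subspace and completeness is preserved.

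The main obstacle is justifying the identification of the iterated $\Delta$-integrals with a single Lebesgue integral against a genuine product measure, for functions that are only measurable rather than bounded. As quoted, \eqref{thm:Fubini} is stated for bounded functions. I would first apply it to the truncations $\min(u^2,N)$, which are bounded, and then pass to the limit $N\to\infty$ by monotone convergence. This yields the Tonelli form for nonnegative functions, and Tonelli suffices both for the norm and, via Cauchy--Schwarz, for the inner product.
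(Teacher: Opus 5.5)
Your proposal is correct and follows the paper's route. The paper simply asserts that $\mathscr{H}$ lies in $L^2$ of the product $\Delta$-measure of \cite{Bohner-Guseinov-Lebesgue} and defers to Remark~\ref{rem:H-completeness}; you spell out the Riesz--Fischer argument and rightly note that, since Definition~\ref{def:H} imposes no boundary condition, $\mathscr{H}$ is all of $L^2\bigl(\prod_i [a_i,b_i)\cap\mathbb{T}_i,\mu_\Delta\bigr)$ rather than a proper closed subspace.

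Two small repairs are needed. First, \eqref{thm:Fubini} requires every section integral to exist (it is a Riemann-type statement), and that is not automatic for truncations of a merely measurable function. Justify the iterated-integral identification instead by the abstract Tonelli theorem for the completed product measure, which is what the Carath\'eodory construction produces. Second, in your optional face remark, use the closed condition $u\mathbf{1}_F=0$ a.e.\ rather than point evaluation, since a point of a face has zero measure as soon as one of its other coordinates is right-dense.
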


\begin{proof}
The proof follows from the fact that $\mathscr{H}$ is a closed subspace of $L^2(\Omega, \mu_\Delta)$, where $\mu_\Delta$ is the product $\Delta$-measure constructed as in \cite{Bohner-Guseinov-Lebesgue}. The argument is analogous to the one-dimensional case discussed in Remark \ref{rem:H-completeness}.
\end{proof}
Define the linear operator $A\colon \mathscr{D} \subset \mathscr{H} \to \mathscr{H}$ by
\begin{equation}\label{eq:nD-operator}
(Au)(x) = -\Delta_{\mathbb{T}} u(x) = -\sum_{i=1}^{n} u^{\nabla_i\Delta_i}(x).
\end{equation}
We now show that $A$ is symmetric and positive.
\begin{theorem}\label{thm:nD-self-adjoint}
The operator $A$ is symmetric and positive. Specifically, for all $u, v \in \mathscr{D}$,
\begin{align}
\langle Au, v \rangle &= \langle u, Av \rangle, \label{eq:nD-symmetric}\\
\langle Au, u \rangle &= \sum_{i=1}^{n} \int_{\Omega} \left( u^{\nabla_i}(x) \right)^2 \nabla_i x_i \prod_{j \neq i} \Delta_j x_j. \label{eq:nD-positive}
\end{align}
Also, $\langle Au, u \rangle > 0$ for all $u \in \mathscr{D}$ with $u \neq 0$.
\end{theorem}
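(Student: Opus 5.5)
The plan is to reduce the $n$-dimensional statement to the one-dimensional Theorem~\ref{thm:1D-self-adjoint}, applied one coordinate at a time, using Fubini's theorem \eqref{thm:Fubini} to reorder the iterated integrals.

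First, write $\langle Au, v\rangle = -\sum_{i=1}^n \langle u^{\nabla_i\Delta_i}, v\rangle$ and treat each summand separately. For fixed $i$, Fubini lets us place the $\Delta_i x_i$ integral innermost:
\[
\langle u^{\nabla_i\Delta_i}, v\rangle = \int \Big( \int_{a_i}^{b_i} u^{\nabla_i\Delta_i}(x) v(x)\,\Delta_i x_i \Big) \prod_{j\neq i}\Delta_j x_j .
\]
For each frozen $(x_j)_{j\neq i}$, the functions $t\mapsto u(\dots,t,\dots)$ and $t\mapsto v(\dots,t,\dots)$ satisfy the one-dimensional hypotheses. Conditions (iii)--(iv) of Definition~\ref{def:D} supply the needed differentiability, and condition (ii) gives vanishing at $t=a_i,b_i$, since such points lie on $\partial\Omega$. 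So the computation in the proof of Theorem~\ref{thm:1D-self-adjoint} applies verbatim. One use of \eqref{eq:ibp1} gives
\[
-\int_{a_i}^{b_i} u^{\nabla_i\Delta_i} v\,\Delta_i x_i=\int_{a_i}^{b_i} u^{\nabla_i}v^{\nabla_i}\,\nabla_i x_i ,
\]
and a second use, \eqref{eq:ibp2}, turns this into $-\int u\, v^{\nabla_i\Delta_i}\,\Delta_i x_i$. In both steps the boundary terms vanish. Integrating over the remaining variables, undoing Fubini, and summing over $i$ yields \eqref{eq:nD-symmetric}. Setting $v=u$ in the intermediate identity gives \eqref{eq:nD-positive}, where the $i$-th term is understood as the iterated integral with $\nabla_i x_i$ in the $i$-th slot.

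For strict positivity, note that each term in \eqref{eq:nD-positive} is nonnegative. If the sum vanishes, then in particular the $i=1$ term vanishes. So for $\mu_\Delta$-almost every choice of $(x_2,\dots,x_n)$ we have $\int_{a_1}^{b_1}(u^{\nabla_1})^2\nabla_1 x_1=0$. By the one-dimensional argument (Theorem~\ref{thm:1D-self-adjoint}), using continuity of $u^{\nabla_1}$, this forces $u^{\nabla_1}\equiv0$ on that slice. Together with $u(a_1,\cdot)=0$, the slice function is then identically zero. The set of slices where this holds has full measure in the remaining variables, so $u=0$ $\mu_\Delta$-a.e. and hence $u=0$ in $\mathscr{H}$. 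Continuity of $u$ on $\overline\Omega$ even upgrades this to $u\equiv 0$ pointwise, because a continuous function vanishing a.e. on each slice vanishes wherever the slice measure charges neighborhoods. This contradicts $u\neq0$.

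The main obstacle is justifying the use of Fubini's theorem \eqref{thm:Fubini}, which is stated for bounded integrands on two-factor rectangles, and the mixed measure $\nabla_i x_i\prod_{j\ne i}\Delta_j x_j$ appearing in \eqref{eq:nD-positive}. I would handle this by induction on $n$, grouping the variables other than $x_i$ as one factor. Boundedness of $u^{\nabla_i}v^{\nabla_i}$ comes from the continuity assumptions in Definition~\ref{def:D}, provided the $\nabla_i$-derivatives extend continuously to the closed box, as in the one-dimensional domain $D$. I would make that implicit assumption explicit. The mixed measure is used only inside the $x_i$ integral, which is a one-dimensional $\nabla$-integral and needs no multidimensional $\nabla$-measure theory.
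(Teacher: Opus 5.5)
Your proposal is correct and follows essentially the same route as the paper: Fubini to put the $x_i$-integral innermost, then \eqref{eq:ibp1} and \eqref{eq:ibp2} with boundary terms killed by $v=0$ and $u=0$ on $\partial\Omega$, summation over $i$, and $v=u$ to get \eqref{eq:nD-positive}. Your strict-positivity step (the $i=1$ term plus continuity of $u^{\nabla_1}$ on almost every slice) and your remark that the $u^{\nabla_i}$ must extend to the closed box are more careful versions of what the paper compresses into one line.
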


\begin{proof}
We prove the result for the $i^{\text{th}}$ term in the sum defining $A$. Consider $I_i = -\displaystyle\int_{\Omega} u^{\nabla_i\Delta_i}(x) v(x) \Delta^n x$, where $\Delta^n x = \Delta_n x_n \cdots \Delta_1 x_1$. By Fubini's theorem, we integrate first with respect to $x_i$ to get
\[
I_i = -\int_{\Omega_{-i}} \left( \int_{a_i}^{b_i} u^{\nabla_i\Delta_i}(x) v(x) \Delta_i x_i \right) \prod_{j \neq i} \Delta_j x_j,
\]
where $\Omega_{-i}$ denotes integration over all coordinates except $x_i$.
For the inner integral, we apply the integration by parts formula \eqref{eq:ibp1} with $f = u^{\nabla_i}$ and $g = v$, yielding
\[
\int_{a_i}^{b_i} u^{\nabla_i\Delta_i}(x) v(x) \Delta_i x_i = u^{\nabla_i}(x) v(x) \Big|_{x_i=a_i}^{x_i=b_i} - \int_{a_i}^{b_i} u^{\nabla_i}(x) v^{\nabla_i}(x) \nabla_i x_i.
\]
Since $v = 0$ on $\partial\Omega$, the boundary term vanishes. Thus
\[
I_i = \int_{\Omega_{\cancel{i}}} \int_{a_i}^{b_i} u^{\nabla_i}(x) v^{\nabla_i}(x) \nabla_i x_i \prod_{j \neq i} \Delta_j x_j.
\]
Now apply integration by parts formula \eqref{eq:ibp2} with $f = u$ and $g = v^{\nabla_i}$ to obtain
\[
\int_{a_i}^{b_i} u^{\nabla_i}(x) v^{\nabla_i}(x) \nabla_i x_i = u(x) v^{\nabla_i}(x) \Big|_{x_i=a_i}^{x_i=b_i} - \int_{a_i}^{b_i} u(x) v^{\nabla_i\Delta_i}(x) \Delta_i x_i.
\]
Since $u = 0$ on $\partial\Omega$, the boundary term vanishes, giving $I_i = -\displaystyle\int_{\Omega} u(x) v^{\nabla_i\Delta_i}(x) \Delta^n x$. Summing over $i$ proves \eqref{eq:nD-symmetric}. Setting $v = u$ in the intermediate step gives \eqref{eq:nD-positive}. The positivity assertion follows since if $\langle Au, u \rangle = 0$, then $u^{\nabla_i} = 0$ almost everywhere for all $i$, which combined with the boundary conditions implies $u \equiv 0$.
\end{proof}

We now show that the $n$-dimensional eigenvalue problem can be solved by separation of variables. Consider the eigenvalue problem
\begin{equation}\label{eq:nD-evp}
\begin{cases}
Au = \lambda u & \text{in } \Omega,\\
u = 0 & \text{on } \partial\Omega.
\end{cases}
\end{equation}
We shall solve \eqref{eq:nD-evp} with separation of variables.
\begin{theorem}\label{thm:separation}
The eigenvalue problem \eqref{eq:nD-evp} admits solutions of the form
\begin{equation}\label{eq:separated-solution}
u(x) = u(x_1, x_2, \ldots, x_n) = X_1(x_1) X_2(x_2) \cdots X_n(x_n),
\end{equation}
where each $X_i$ satisfies the one-dimensional eigenvalue problem
\begin{equation} \label{eq:1D-separated}
\left\{ \begin{array}{ll}
-X_i^{\nabla_i\Delta_i}(x_i) = \mu_i X_i(x_i), \quad x_i \in (a_i, b_i), \\
X_i(a_i) = X_i(b_i) = 0, 
\end{array}\right.
\end{equation}
with $\lambda = \displaystyle \sum_{i=1}^{n} \mu_i$.
\end{theorem}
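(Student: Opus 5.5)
The plan is to verify directly that products of one-dimensional Dirichlet eigenfunctions are eigenfunctions of $A$ in $\mathscr{D}$. The one-dimensional eigenfunctions exist by Theorem~\ref{thm:1D-expansion}.

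\textbf{Step 1: choose the factors.} For each $i$, apply Theorem~\ref{thm:1D-expansion} on the time scale $\mathbb{T}_i$ with endpoints $a_i<b_i$. This gives an eigenpair $(\mu_i, X_i)$ of \eqref{eq:1D-separated} with $X_i \in D$ (the one-dimensional domain on $[a_i,b_i]\cap\mathbb{T}_i$) and $X_i\not\equiv 0$. Define $u$ by \eqref{eq:separated-solution} and set $\lambda=\sum_i \mu_i$.

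\textbf{Step 2: compute $Au$.} Differentiating in $x_i$ treats the other coordinates as constants. Hence $u^{\nabla_i}(x)=X_i^{\nabla_i}(x_i)\prod_{j\ne i}X_j(x_j)$ and $u^{\nabla_i\Delta_i}(x)=X_i^{\nabla_i\Delta_i}(x_i)\prod_{j\ne i}X_j(x_j)=-\mu_i u(x)$. Summing over $i$ gives $Au=-\sum_i u^{\nabla_i\Delta_i}=\big(\sum_i\mu_i\big)u=\lambda u$ in $\Omega$.

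\textbf{Step 3: check the boundary condition.} If $x\in\partial\Omega$, then some $x_i\in\{a_i,b_i\}$, so $X_i(x_i)=0$ and therefore $u(x)=0$.

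\textbf{Step 4: check $u\in\mathscr{D}$ and $u\neq 0$.} Continuity on $\overline\Omega$ holds because each $X_i$ is continuous on $[a_i,b_i]\cap\mathbb{T}_i$, and a finite product of continuous functions of separate variables is continuous in the product topology. Conditions (iii) and (iv) of Definition~\ref{def:D} follow from properties (ii) and (iii) in the definition of $D$ for each factor, since the other factors enter only as constants. Square integrability follows from Fubini's theorem \eqref{thm:Fubini}, applied iteratively for $n$ factors:
\[
\|u\|^2=\prod_{i=1}^n\int_{a_i}^{b_i}X_i^2\,\Delta_i x_i<\infty .
\]
The same identity shows $\|u\|\neq0$, so $u$ is a genuine eigenfunction.

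For motivation I would also present the classical derivation. Substituting $u=\prod X_i$ into $Au=\lambda u$ and dividing by $u$ wherever it is nonzero gives
\[
-\sum_i X_i^{\nabla_i\Delta_i}/X_i=\lambda .
\]
Each summand depends only on $x_i$, so each must equal a constant $\mu_i$. This leads to \eqref{eq:1D-separated}. The rigorous argument, however, is the direct verification above, which avoids dividing by zeros.

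The main obstacle is the regularity bookkeeping in Step 4, especially continuity of $u^{\nabla_i}$ jointly with the frozen variables and the use of Fubini for unbounded factors. Here I would use the boundedness of the $X_i$ and the fact that they are continuous on compact sets, which lets the bounded version of \eqref{thm:Fubini} apply.
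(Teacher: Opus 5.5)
Your proof is correct, but it runs the implication in the opposite direction from the paper. The paper's route is a derivation:
\begin{itemize}
\item it assumes a nontrivial eigenfunction of product form and substitutes it into $Au=\lambda u$;
\item it divides by $u$ where $u\neq0$ and argues that each quotient $-X_i^{\nabla_i\Delta_i}/X_i$ depends on $x_i$ alone while their sum is constant, so each equals a constant $\mu_i$;
\item it reads off $X_j(a_j)=X_j(b_j)=0$ from $u=0$ on the faces of $\partial\Omega$, using that the other factors are not identically zero.
\end{itemize}
So the paper shows that any separable eigenfunction must be built from one-dimensional Dirichlet eigenpairs with $\lambda=\sum_i\mu_i$. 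That is essentially your ``motivation'' paragraph. You instead take eigenpairs from Theorem~\ref{thm:1D-expansion} and verify directly that their product is a nonzero element of $\mathscr{D}$ solving \eqref{eq:nD-evp}.

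Your direction buys several things:
\begin{itemize}
\item it is what the word ``admits'' literally asserts;
\item it is what Theorem~\ref{thm:nD-basis} actually relies on when it calls the products $u_{p_1,\ldots,p_n}$ eigenfunctions;
\item it checks $u\in\mathscr{D}$ and $u\neq0$ via $\|u\|^2=\prod_i\int_{a_i}^{b_i}X_i^2\,\Delta_i x_i$, which the paper leaves implicit;
\item it never divides.
\end{itemize}

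The paper's direction adds that the separation ansatz yields nothing beyond these products. However, its constancy argument only covers points where the factors are nonzero. At a zero $x_k$ of $X_k$, one must additionally choose $x_j$ with $X_j(x_j)\neq0$ for $j\neq k$. Then every term with $i\neq k$ contains the factor $X_k(x_k)=0$, which forces $X_k^{\nabla_k\Delta_k}(x_k)=0=-\mu_kX_k(x_k)$. That is the one step you would need to make your motivational derivation rigorous.

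The regularity bookkeeping you flag as the main obstacle is milder than you suggest. Definition~\ref{def:D}(iii) only asks for continuity in $x_i$ with the other variables frozen. Square integrability already follows from continuity and boundedness of $u$ on the compact set $\overline{\Omega}$ together with finiteness of the $\Delta$-measure of $\Omega$. Fubini is needed only for the norm identity that gives $u\neq0$.
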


\begin{proof}
We seek a nontrivial solution of the form \eqref{eq:separated-solution}. Substituting into the eigenvalue equation $Au = \lambda u$, we must compute $u^{\nabla_i\Delta_i}$ for each $i \in \{1, \ldots, n\}$. Since $u$ is a product of functions, each depending on a single variable, and the operators $\nabla_i$ and $\Delta_i$ act only on the variable $x_i$, we have
$u^{\nabla_i\Delta_i}(x) = \left( \displaystyle\prod_{j \neq i} X_j(x_j) \right) X_i^{\nabla_i\Delta_i}(x_i)$, thus $Au = -\displaystyle\sum_{i=1}^{n} \left( \prod_{j \neq i} X_j(x_j) \right) X_i^{\nabla_i\Delta_i}(x_i)$. The eigenvalue equation $Au = \lambda u$ becomes $-\displaystyle\sum_{i=1}^{n} \left( \prod_{j \neq i} X_j(x_j) \right) X_i^{\nabla_i\Delta_i}(x_i) = \lambda \prod_{k=1}^{n} X_k(x_k)$. Since we seek a nontrivial solution, we divide both sides by $u(x) = \displaystyle\prod_{k=1}^{n} X_k(x_k)$ at points where $u(x) \neq 0$, obtaining
\begin{equation}\label{eq:separated-divided}
-\sum_{i=1}^{n} \frac{X_i^{\nabla_i\Delta_i}(x_i)}{X_i(x_i)} = \lambda.
\end{equation}

Now we invoke the fundamental principle of separation of variables. Define $\Phi_i(x_i) := -\dfrac{X_i^{\nabla_i\Delta_i}(x_i)}{X_i(x_i)}$ for each $i \in \{1, \ldots, n\}$. Equation \eqref{eq:separated-divided} states that $\displaystyle\sum_{i=1}^{n} \Phi_i(x_i) = \lambda$. The crucial observation is that each function $\Phi_i$ depends only on the single variable $x_i$, while the sum equals the constant $\lambda$. We claim that each $\Phi_i$ must itself be constant.

To see this, fix any index $k \in \{1, \ldots, n\}$ and consider two arbitrary points $t_1, t_2 \in (a_k, b_k) \cap \mathbb{T}_k$. Evaluating $\displaystyle\sum_{i=1}^{n} \Phi_i(x_i) = \lambda$ at $x_k = t_1$ and $x_k = t_2$ (with all other coordinates fixed) and subtracting, we obtain $\Phi_k(t_1) - \Phi_k(t_2) = 0$, since the terms $\Phi_i(x_i)$ for $i \neq k$ do not depend on $x_k$ and therefore cancel. Since $t_1$ and $t_2$ were arbitrary, $\Phi_k$ is constant. Denoting this constant by $\mu_k$, we have $-\dfrac{X_k^{\nabla_k\Delta_k}(x_k)}{X_k(x_k)} = \mu_k$ for each $k \in \{1, \ldots, n\}$, which can be rewritten as $-X_k^{\nabla_k\Delta_k}(x_k) = \mu_k X_k(x_k)$. Moreover, the constants satisfy $\displaystyle\sum_{i=1}^{n} \mu_i = \lambda$.

It remains to verify the boundary conditions. The condition $u = 0$ on $\partial\Omega$ requires that $u$ vanish whenever any coordinate $x_j$ equals $a_j$ or $b_j$. Consider the boundary face where $x_j = a_j$ for some fixed $j$:
$$u(x_1, \ldots, x_{j-1}, a_j, x_{j+1}, \ldots, x_n) = X_1(x_1) \cdots X_{j-1}(x_{j-1}) \cdot X_j(a_j) \cdot X_{j+1}(x_{j+1}) \cdots X_n(x_n) = 0.$$
For this to hold for all choices of $x_i \in (a_i, b_i)$ with $i \neq j$, and since each $X_i$ is not identically zero, we must have $X_j(a_j) = 0$. Similarly, $X_j(b_j) = 0$. Since $j$ was arbitrary, we conclude that $X_i(a_i) = X_i(b_i) = 0$ for each $i \in \{1, \ldots, n\}$.

Therefore, each factor $X_i$ satisfies the one-dimensional eigenvalue problem \eqref{eq:1D-separated}, whose solutions are characterized by Theorem~\ref{thm:1D-expansion}, and the eigenvalue $\lambda$ decomposes as the sum $\lambda = \displaystyle\sum_{i=1}^{n} \mu_i$ of the one-dimensional eigenvalues.
\end{proof}

By Theorem \ref{thm:1D-expansion}, each one-dimensional problem \eqref{eq:1D-separated} has eigenvalues $0 < \mu_1^{(i)} < \mu_2^{(i)} < \cdots$ with corresponding orthonormal eigenfunctions $\phi_1^{(i)}, \phi_2^{(i)}, \ldots$ that form a basis for the one-dimensional Hilbert space $\mathscr{H}_i = L^2([a_i, b_i), \Delta_i)$.

\begin{theorem}\label{thm:nD-basis}
The eigenvalues of the $n$-dimensional problem \eqref{eq:nD-evp}  are given by
\[\lambda_{p_1, p_2, \ldots, p_n} = \sum_{i=1}^{n} \mu_{p_i}^{(i)}, \quad p_i \in \{1, 2, 3, \ldots\},\]
with corresponding orthonormal eigenfunctions
$u_{p_1, p_2, \ldots, p_n}(x) = \displaystyle\prod_{i=1}^{n} \phi_{p_i}^{(i)}(x_i)$. The system $\{u_{p_1, \ldots, p_n}\}$ forms a complete orthonormal basis for $\mathscr{H}$.
\end{theorem}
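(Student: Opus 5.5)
We prove the statement in three steps: the products are eigenfunctions, they are orthonormal, and they are complete. Completeness, the only substantive part, we reduce to the standard fact that tensor products of one-dimensional orthonormal bases form an orthonormal basis of the product $L^2$ space. For each multi-index $(p_1,\dots,p_n)$ put $u=\prod_i \phi^{(i)}_{p_i}(x_i)$. Each factor lies in the one-dimensional domain $D$ of Section~\ref{sec:1D} for $\mathbb{T}_i$ and vanishes at $a_i,b_i$, so $u$ satisfies conditions (i)--(iv) of Definition~\ref{def:D}. The computation in the proof of Theorem~\ref{thm:separation}, read in reverse, then gives $Au=\sum_i \mu^{(i)}_{p_i} u$. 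Hence each $\lambda_{p_1,\dots,p_n}$ is an eigenvalue with eigenfunction $u_{p_1,\dots,p_n}$. Orthonormality follows from Fubini \eqref{thm:Fubini}, since the integrand is a bounded product of one-variable functions on a rectangle:
\[
\langle u_{p},u_{q}\rangle=\prod_{i=1}^n\int_{a_i}^{b_i}\phi^{(i)}_{p_i}\phi^{(i)}_{q_i}\,\Delta_i x_i=\prod_{i=1}^n\delta_{p_iq_i}.
\]

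For completeness we induct on $n$ and use Fubini. Suppose $w\in\mathscr H$ is orthogonal to every $u_p$; we show $w=0$. Write $x=(x',x_n)$ and define
\[
g_{q}(x')=\int_{a_n}^{b_n} w(x',x_n)\phi^{(n)}_{q}(x_n)\,\Delta_n x_n .
\]
By Fubini--Tonelli for the product measure $\mu_\Delta$ of \cite{Bohner-Guseinov-Lebesgue} and Cauchy--Schwarz, $g_q$ lies in the $(n-1)$-dimensional space $\mathscr H'$. Moreover, $\langle g_q,\prod_{i<n}\phi^{(i)}_{p_i}\rangle_{\mathscr H'}=\langle w,u_{(p',q)}\rangle=0$ for all $p'$. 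The induction hypothesis (completeness in dimension $n-1$; the base case $n=1$ is Theorem~\ref{thm:1D-expansion}) gives $g_q=0$ $\mu_\Delta$-a.e. for every $q$. Since there are countably many $q$, for a.e. $x'$ the slice $w(x',\cdot)$ lies in $\mathscr H_n$ and is orthogonal to all $\phi^{(n)}_q$. By Theorem~\ref{thm:1D-expansion} the slice is then zero a.e., and Tonelli gives $\|w\|=0$. An equivalent route is to say that $\mathscr H\cong \mathscr H_1\otimes\cdots\otimes\mathscr H_n$ and that tensor products of orthonormal bases are orthonormal bases.

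Finally we show the list contains \emph{all} eigenvalues, which the statement implicitly asserts. Let $Av=\lambda v$ with $v\in\mathscr D$, $v\ne0$. Symmetry \eqref{eq:nD-symmetric} gives $(\lambda-\lambda_p)\langle v,u_p\rangle=0$. If $\lambda$ differed from every $\lambda_p$, then $v$ would be orthogonal to the complete system, so $v=0$, a contradiction. This also shows every eigenfunction is a finite combination of products with $\lambda_p=\lambda$.

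The main obstacle is the measure-theoretic bookkeeping in the completeness step. We must check that $g_q$ is $\Delta$-measurable and square integrable, so that Fubini applies to unbounded $L^2$ functions rather than only the bounded ones in \eqref{thm:Fubini}. We must also handle the subtlety in the definition of $H$ that $y(a)=0$ at a right-scattered $a$, which means the slice spaces must coincide with the $\mathscr H_i$ used in Theorem~\ref{thm:1D-expansion}. We would handle both by working throughout with the Lebesgue product measure of \cite{Bohner-Guseinov-Lebesgue}, where Tonelli and Fubini hold for integrable functions, and by noting that the point mass at a right-scattered $a_i$ carries the zero condition in each factor.
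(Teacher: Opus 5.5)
Your proof is correct, modulo the definitional point in the second paragraph. Its core is the paper's argument: orthonormality by Fubini, and completeness by induction on $n$ through Fubini, reducing to Theorem~\ref{thm:1D-expansion}.

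\begin{itemize}
\item \textbf{Order of the induction.} Your induction is the mirror image of the paper's. The paper integrates out $x_1,\dots,x_{n-1}$, applies one-dimensional completeness in $x_n$ first, and then applies the induction hypothesis to the slices $f(\cdot,x_n)$. You integrate out $x_n$, apply the induction hypothesis first, and then apply one-dimensional completeness to the slices $w(x',\cdot)$.
\item \textbf{Rigor.} You make explicit two things the paper uses silently: the countable union of exceptional null sets, and the need for a Lebesgue (not merely bounded) Fubini--Tonelli theorem.
\item \textbf{Coverage.} You check that each product lies in $\mathscr{D}$ and satisfies $Au_p=\lambda_p u_p$. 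The proof of Theorem~\ref{thm:separation} only argues the converse: a separated eigenfunction must have factors solving the one-dimensional problems.
\item \textbf{Exhaustiveness.} Via \eqref{eq:nD-symmetric} and completeness, you show that the $\lambda_{p_1,\dots,p_n}$ exhaust all eigenvalues. The statement asserts this, but the paper's proof does not address it.
\end{itemize}

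The right-scattered endpoint issue must be fixed by a definition, not a remark. Definition~\ref{def:H} as written does not require $u=0$ on the faces $\{x_i=a_i\}$ with $a_i$ right-scattered. These faces have positive $\Delta$-measure, so without that condition the completeness claim is false. In Example~\ref{ex:2D}, $\mathscr{H}$ would be $9$-dimensional while there are only $4$ product eigenfunctions. Your steps placing $g_q$ in $\mathscr{H}'$ and the slices $w(x',\cdot)$ in $\mathscr{H}_n$ would also fail. State explicitly that $\mathscr{H}$ is the closed subspace of $L^2(\mu_\Delta)$ carrying these face conditions, the $n$-dimensional analogue of the one-dimensional space $H$. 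With that, your argument goes through. The paper has the same blind spot, since it takes the one-dimensional basis space to be $L^2([a_i,b_i),\Delta_i)$.
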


\begin{proof}
We first establish orthonormality. By Fubini's theorem \eqref{thm:Fubini}, the $n$-dimensional integral over $\Omega = \displaystyle \prod_{i=1}^n [a_i, b_i)$ factors as a product of one-dimensional integrals:
$$\langle u_{p_1, \ldots, p_n}, u_{q_1, \ldots, q_n} \rangle = \int_{\Omega} \prod_{i=1}^{n} \phi_{p_i}^{(i)}(x_i) \phi_{q_i}^{(i)}(x_i) \, \Delta^n x = \prod_{i=1}^{n} \int_{a_i}^{b_i} \phi_{p_i}^{(i)}(x_i) \phi_{q_i}^{(i)}(x_i) \, \Delta_i x_i = \displaystyle\prod_{i=1}^{n} \delta_{p_i q_i},$$
where the last equality uses the orthonormality of the one-dimensional eigenfunctions $\{\phi_k^{(i)}\}$ in each $\mathscr{H}_i$. The product $\displaystyle\prod_{i=1}^{n} \delta_{p_i q_i}$ equals $1$ if and only if $p_i = q_i$ for all $i \in \{1, \ldots, n\}$, and equals $0$ otherwise.

We now show completeness of the orthonormal system, meaning the set $\{u_{p_1, \ldots, p_n}\}$ spans $\mathscr{H}$. Suppose $f \in \mathscr{H}$ satisfies $\langle f, u_{p_1, \ldots, p_n} \rangle = 0$ for all indices $(p_1, \ldots, p_n)$. We show $f = 0$ by induction on $n$. The case $n = 1$ is Theorem~\ref{thm:1D-expansion}. For the inductive step, assume the result holds for dimension $n-1$. Define
$$g_{p_1, \ldots, p_{n-1}}(x_n) = \int_{a_1}^{b_1} \cdots \int_{a_{n-1}}^{b_{n-1}} f(x) \prod_{i=1}^{n-1} \phi_{p_i}^{(i)}(x_i) \, \Delta_{n-1} x_{n-1} \cdots \Delta_1 x_1.$$
Then $\langle f, u_{p_1, \ldots, p_n} \rangle = \int_{a_n}^{b_n} g_{p_1, \ldots, p_{n-1}}(x_n) \phi_{p_n}^{(n)}(x_n) \, \Delta_n x_n = 0$ for all $p_n$. Since $\{\phi_{p_n}^{(n)}\}$ is a complete orthonormal system in $\mathscr{H}_n$, we conclude $g_{p_1, \ldots, p_{n-1}}(x_n) = 0$ for almost every $x_n$. That is,
$$\int_{a_1}^{b_1} \cdots \int_{a_{n-1}}^{b_{n-1}} f(x_1, \ldots, x_{n-1}, x_n) \prod_{i=1}^{n-1} \phi_{p_i}^{(i)}(x_i) \, \Delta_{n-1} x_{n-1} \cdots \Delta_1 x_1 = 0$$
for all $(p_1, \ldots, p_{n-1})$. By the inductive hypothesis, $f(x_1, \ldots, x_{n-1}, x_n) = 0$ for almost every $(x_1, \ldots, x_{n-1})$. Therefore $f = 0$ almost everywhere on $\Omega$.
\end{proof}
\begin{corollary}\label{cor:first-eigenvalue}
The smallest eigenvalue of the $n$-dimensional problem is
$\lambda_1 = \lambda_{1,1,\ldots,1} = \displaystyle\sum_{i=1}^{n} \mu_1^{(i)}$, where $\mu_1^{(i)}$ is the first eigenvalue of the $i^{\text{th}}$ one-dimensional problem. Moreover,
\begin{equation}\label{eq:lambda1-nD-bound}
\lambda_1 \geq \sum_{i=1}^{n} \frac{4}{(b_i - a_i)^2}.
\end{equation}
\end{corollary}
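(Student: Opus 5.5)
The plan is to read the spectrum directly off Theorem~\ref{thm:nD-basis}. Every eigenvalue of \eqref{eq:nD-evp} attached to a product eigenfunction has the form $\lambda_{p_1,\ldots,p_n}=\sum_{i=1}^n \mu^{(i)}_{p_i}$. By Theorem~\ref{thm:1D-expansion}, each one-dimensional sequence is strictly increasing, $0<\mu^{(i)}_1<\mu^{(i)}_2<\cdots$. Hence $\mu^{(i)}_{p_i}\ge \mu^{(i)}_1$ for every $i$, and summing gives $\lambda_{p_1,\ldots,p_n}\ge \lambda_{1,\ldots,1}$, with equality only when all $p_i=1$.

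We must also rule out eigenvalues that do not arise from products. Suppose $Au=\lambda u$ with $u\in\mathscr{D}$, $u\neq 0$. I expand $u$ in the complete orthonormal basis $\{u_{p_1,\ldots,p_n}\}$ from Theorem~\ref{thm:nD-basis}. The symmetry \eqref{eq:nD-symmetric} gives
\[
\lambda\langle u,u_p\rangle=\langle Au,u_p\rangle=\langle u,Au_p\rangle=\lambda_p\langle u,u_p\rangle .
\]
Here we need $u_p\in\mathscr{D}$. Each $\phi^{(i)}_{p_i}$ lies in the one-dimensional domain $D$, so the product satisfies the conditions of Definition~\ref{def:D}. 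So $\langle u,u_p\rangle=0$ unless $\lambda=\lambda_p$. Completeness forces some coefficient to be nonzero, so $\lambda=\lambda_p$ for some multi-index $p$. Therefore the set of eigenvalues is exactly $\{\lambda_p\}$, and its minimum is $\lambda_{1,\ldots,1}=\sum_i\mu^{(i)}_1$.

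Alternatively, one can argue via the Rayleigh quotient. Using \eqref{eq:nD-positive} and Parseval, $\langle Au,u\rangle=\sum_p\lambda_p|\langle u,u_p\rangle|^2\ge\lambda_{1,\ldots,1}\|u\|^2$. That route, however, requires justifying the termwise expansion of $Au$, so I prefer the argument above.

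The bound \eqref{eq:lambda1-nD-bound} then follows by applying Theorem~\ref{thm:lambda1-bound} to each interval $[a_i,b_i]\cap\mathbb{T}_i$, which gives $\mu^{(i)}_1\ge 4/(b_i-a_i)^2$, and summing over $i$.

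The main obstacle is the middle step: showing that no non-separable eigenfunction can carry a smaller eigenvalue. The pairing argument disposes of this cleanly, provided the product eigenfunctions belong to $\mathscr{D}$. I would check that membership in detail: continuity on $\overline{\Omega}$, vanishing on each face, and the mixed $\nabla_i\Delta_i$ differentiability in each variable separately, which holds because the other factors are constant in $x_i$.
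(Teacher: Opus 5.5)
Your proposal is correct and follows the paper's own proof. You minimize $\sum_{i}\mu^{(i)}_{p_i}$ over multi-indices using the strict ordering from Theorem~\ref{thm:1D-expansion}, then sum the one-dimensional bounds $\mu^{(i)}_1\ge 4/(b_i-a_i)^2$ from Theorem~\ref{thm:lambda1-bound}. Your extra pairing step, $(\lambda-\lambda_p)\langle u,u_p\rangle=0$ via \eqref{eq:nD-symmetric} plus completeness, is a worthwhile addition. The paper simply reads off from the statement of Theorem~\ref{thm:nD-basis} that every eigenvalue has the form $\lambda_p$, but that theorem's proof only establishes orthonormality and completeness of the product system, so your step closes that gap.
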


\begin{proof}
The first statement follows from the fact that the minimum of $\displaystyle\sum_{i=1}^{n} \mu_{p_i}^{(i)}$ over all choices of positive integers $p_i$ is achieved when each $p_i = 1$. The bound \eqref{eq:lambda1-nD-bound} follows from Theorem \ref{thm:lambda1-bound} applied to each one-dimensional problem.
\end{proof}

Since the operator $A$ is positive, it has trivial kernel, and its inverse $A^{-1}$ exists. We characterize $A^{-1}$ using the eigenfunction expansion.

\begin{theorem}\label{thm:nD-inverse}
For any $u \in \mathscr{H}$, the eigenfunction expansion
\begin{equation}\label{eq:u-expansion}
u = \sum_{k} c_k u_k, \quad c_k = \langle u, u_k \rangle,
\end{equation}
converges in $\mathscr{H}$, where the sum is over all multi-indices $k = (p_1, \ldots, p_n)$ and $u_k$ denotes the corresponding eigenfunction. The inverse operator $A^{-1}\colon \mathscr{H} \to \mathscr{D}$ is given by
\begin{equation}\label{eq:A-inverse-expansion}
A^{-1}u = \sum_{k} \frac{c_k}{\lambda_k} u_k,
\end{equation}
and satisfies the bound
\begin{equation}\label{eq:A-inverse-bound}
\|A^{-1}u\| \leq \frac{1}{\lambda_1} \|u\| \quad \text{for all } u \in \mathscr{H}.
\end{equation}
\end{theorem}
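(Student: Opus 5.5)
The plan is to use Theorem~\ref{thm:nD-basis}, by which the product eigenfunctions $\{u_k\}$ form a complete orthonormal basis of $\mathscr{H}$. Convergence of \eqref{eq:u-expansion} then follows exactly as in the proof of Theorem~\ref{thm:1D-expansion}. Enumerate the countable multi-index set in any order. Bessel's inequality gives $\sum_k c_k^2 \le \|u\|^2$, so the partial sums form a Cauchy sequence in the complete space $\mathscr{H}$ (Proposition~\ref{prop:H-complete}). Their limit $w$ satisfies $\langle u-w, u_k\rangle = 0$ for every $k$, and completeness forces $w=u$. Parseval's identity $\|u\|^2=\sum_k c_k^2$ follows.

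Next, define $Bu := \sum_k (c_k/\lambda_k) u_k$. By Corollary~\ref{cor:first-eigenvalue}, $\lambda_k \ge \lambda_1 >0$ for every multi-index. Hence $\sum_k c_k^2/\lambda_k^2 \le \lambda_1^{-2}\sum_k c_k^2<\infty$, the series converges in $\mathscr{H}$ by the same Cauchy argument, and Parseval gives
\[
\|Bu\|^2=\sum_k \frac{c_k^2}{\lambda_k^2}\le \frac{1}{\lambda_1^2}\|u\|^2 ,
\]
which is \eqref{eq:A-inverse-bound} once $B=A^{-1}$ is established. To identify $B$ with $A^{-1}$, take $v\in\mathscr{D}$ and use symmetry \eqref{eq:nD-symmetric} together with $Au_k=\lambda_k u_k$ (each $u_k$ lies in $\mathscr{D}$, being a product of one-dimensional eigenfunctions in $D$). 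This gives $\langle Av,u_k\rangle=\langle v,Au_k\rangle=\lambda_k\langle v,u_k\rangle$, so the Fourier coefficients of $Av$ are $\lambda_k\langle v,u_k\rangle$ and therefore $B(Av)=v$. This also gives $\|v\|\le \lambda_1^{-1}\|Av\|$, consistent with positivity and injectivity of $A$ (Theorem~\ref{thm:nD-self-adjoint}).

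The main obstacle is surjectivity: showing that $Bu\in\mathscr{D}$ and $A(Bu)=u$ for an arbitrary $u\in\mathscr{H}$, not merely for $u$ in the range of $A$. For finite partial sums $S_m=\sum_{|k|\le m} c_k u_k$ this is immediate, since $BS_m=\sum_{|k|\le m}(c_k/\lambda_k)u_k\in\mathscr{D}$ and $A BS_m=S_m$. The difficulty is passing to the limit, because $A$ is unbounded and only closedness can be used. I would first try to show that $A$ is closed. The natural route is the $n$-dimensional Green's representation obtained by iterating the one-dimensional kernel $G$ from Theorem~\ref{thm:A-inverse} in each variable. One can then show that $BS_m\to Bu$ uniformly and that the partial derivatives $(BS_m)^{\nabla_i}$ converge uniformly, using the Cauchy--Schwarz and equicontinuity estimates from the proof of Theorem~\ref{thm:A-inverse} applied coordinatewise. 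Then $Bu$ inherits continuity, the boundary conditions, and $\nabla_i$-differentiability, and the identity $-\Delta_{\mathbb{T}}(Bu)=u$ holds in the integrated (weak) sense $\langle Bu, Av\rangle=\langle u,v\rangle$ for all $v\in\mathscr{D}$.

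If pointwise regularity of the infinite series proves too delicate, the fallback is to interpret $A$ as its closure, i.e.\ the Friedrichs extension associated with the form \eqref{eq:nD-positive}. The formula \eqref{eq:A-inverse-expansion} then defines $A^{-1}$ on all of $\mathscr{H}$ directly, and the bound \eqref{eq:A-inverse-bound} follows from Parseval as above regardless of how the regularity question is resolved.
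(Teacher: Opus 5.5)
The part of your argument that the paper actually carries out matches it. The paper obtains \eqref{eq:u-expansion} from Theorem~\ref{thm:nD-basis}, takes $Au=f$ with $u\in\mathscr{D}$, matches coefficients to get $c_k=d_k/\lambda_k$, and bounds the norm by Parseval. Your computation $\langle Av,u_k\rangle=\langle v,Au_k\rangle=\lambda_k\langle v,u_k\rangle$ is the right justification for its ``expanding both sides'' step, which otherwise applies the unbounded $A$ termwise. However, this only shows that $B=A^{-1}$ on $\mathrm{ran}\,A$ and that the bound holds there. The paper does no more than this: it asserts that $A^{-1}$ exists on all of $\mathscr{H}$ because $\ker A=\{0\}$. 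So the surjectivity you flag is an unproved step in the paper too, not something you failed to find.

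Your plan for closing that gap does not work, for three reasons.
\begin{itemize}
\item Iterating the one-dimensional kernels gives $\prod_i G_i(x_i,s_i)$. That is the kernel of $A_1^{-1}\cdots A_n^{-1}$, which multiplies $c_k$ by $\prod_i 1/\mu^{(i)}_{p_i}$ rather than $1/\sum_i\mu^{(i)}_{p_i}$, so it is not a Green's function for $-\Delta_{\mathbb{T}}$.
\item More fundamentally, $Bu\in\mathscr{D}$ is false in general. Take $n=1$, $\mathbb{T}=\mathbb{R}$ and $u=\mathbf{1}_{(c,b)}$. Any $y\in D$ with $Ay=u$ would have $y'$ differentiable at every point of $[a,b)$ with $y''\in L^1$, hence absolutely continuous, so $y'(t)=y'(a)-(t-c)_+$. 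This is not differentiable at $c$. Hence the range claim $A^{-1}\colon\mathscr{H}\to\mathscr{D}$ (and likewise Theorem~\ref{thm:A-inverse}) overclaims. For $\mathbb{T}_i=\mathbb{R}$ and $n\ge 2$, $\nabla(Bu)$ lies only in $H^1$, which contains unbounded functions. So uniform convergence of $(BS_m)^{\nabla_i}$ cannot hold in general.
\item Definition~\ref{def:H} omits the analogue of the condition $y(a)=0$ at right-scattered $a$. In Example~\ref{ex:2D}, $\mathscr{H}$ is the space of all functions on $\{0,1,2\}^2$, and $\delta_{(0,0)}$ is orthogonal to all four product eigenfunctions. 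So even the completeness you import from Theorem~\ref{thm:nD-basis} needs that condition added.
\end{itemize}

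With that condition in place, your fallback is the correct statement. Since $A$ is symmetric and $\mathscr{D}$ contains a complete set of eigenvectors, $A$ is essentially self-adjoint and its closure $\bar A$ is the Friedrichs extension. Then $B=\bar A^{-1}$ on all of $\mathscr{H}$ with $\|B\|\le 1/\lambda_1$, but its range is $\mathrm{dom}\,\bar A$, not $\mathscr{D}$. That is all Sections~\ref{sec:main}--\ref{sec:proof-global} need, except the assertion that the solution lies in $\mathscr{D}$.
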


\begin{proof}
The expansion \eqref{eq:u-expansion} follows from Theorem \ref{thm:nD-basis}. For the inverse, if $Au = f$ with $u \in \mathscr{D}$ and $f \in \mathscr{H}$, then expanding both sides in eigenfunctions gives
\[
\sum_k \lambda_k c_k u_k = \sum_k d_k u_k,
\]
where $c_k = \langle u, u_k \rangle$ and $d_k = \langle f, u_k \rangle$. By uniqueness of the expansion, $c_k = d_k / \lambda_k$, which yields \eqref{eq:A-inverse-expansion}.

For the bound, we compute
\[
\|A^{-1}u\|^2 = \sum_k \frac{c_k^2}{\lambda_k^2} \leq \frac{1}{\lambda_1^2} \sum_k c_k^2 = \frac{1}{\lambda_1^2} \|u\|^2,
\]
using Parseval's equality. Taking square roots gives \eqref{eq:A-inverse-bound}, completing the proof.
\end{proof}

\section{Proof of Theorem~\ref{thm:main}}\label{sec:main}
In this section we now prove Theorem~ \ref{thm:main} by reformulating the boundary value problem as a fixed point equation and apply the contraction mapping theorem.
\begin{proof}[Proof of Theorem~\ref{thm:main}]
Define the linear operator $A\colon \mathscr{D} \subset \mathscr{H} \to \mathscr{H}$ by $(Au)(x) = -\Delta_{\mathbb{T}} u(x)$ and the nonlinear operator $F\colon \mathscr{H} \to \mathscr{H}$ by $(Fu)(x) = f(x, u(x))$. The boundary value problem \eqref{eq:Dirichlet} is equivalent to the operator equation
\begin{equation}\label{eq:operator-eq}
Au = Fu \quad \text{for } u \in \mathscr{D}.
\end{equation}

We must verify that $F$ maps $\mathscr{H}$ into $\mathscr{H}$. For any $u \in \mathscr{H}$, the Lipschitz condition \eqref{eq:Lipschitz} with $\gamma = 0$ gives
\[
|f(x, u(x))| \leq |f(x, u(x)) - f(x, 0)| + |f(x, 0)| \leq L|u(x)| + |f(x, 0)|.
\]
Since $f$ is continuous on the bounded domain $\overline{\Omega} \times \mathbb{R}$ and $f(\cdot, 0)\colon \overline{\Omega} \to \mathbb{R}$ is continuous on the compact set $\overline{\Omega}$, the function $f(\cdot, 0)$ is bounded, say $|f(x, 0)| \leq M$ for all $x \in \Omega$. Therefore, by Young's inequality, we obtain
\[
|f(x, u(x))|^2 \leq 2L^2|u(x)|^2 + 2M^2,
\]
and integrating over $\Omega$,
\[
\|Fu\|^2 = \int_{\Omega} |f(x, u(x))|^2 \Delta^n x \leq 2L^2 \|u\|^2 + 2M^2 |\Omega| < \infty,
\]
where $|\Omega| = \displaystyle\prod_{i=1}^{n}(b_i - a_i)$ is the volume of $\Omega$. Thus $Fu \in \mathscr{H}$, and $F\colon \mathscr{H} \to \mathscr{H}$ is well-defined.

By Theorem \ref{thm:nD-self-adjoint} $A$ is positive with trivial kernel. Hence $A^{-1}$ exists and \eqref{eq:operator-eq} can be written as
\begin{equation}\label{eq:fixed-point}
u = A^{-1}Fu \quad \text{for } u \in \mathscr{H}.
\end{equation}
Note that $A^{-1}$ maps $\mathscr{H}$ onto $\mathscr{D}$ which ensures that any solution automatically satisfies the boundary conditions.
Define $G \colon \mathscr{H} \to \mathscr{H}$ by $G = A^{-1}F$. Then \eqref{eq:fixed-point} becomes the fixed point equation $u = Gu$.

Next we show that $G$ is a contraction on $\mathscr{H}$. For any $u, v \in \mathscr{H}$,
\[\|Gu - Gv\| = \|A^{-1}Fu - A^{-1}Fv\| = \|A^{-1}(Fu - Fv)\| \leq \|A^{-1}\| \cdot \|Fu - Fv\| \leq \frac{1}{\lambda_1} \|Fu - Fv\|,\]
where we used the bound \eqref{eq:A-inverse-bound}.
Using the Lipschitz condition \eqref{eq:Lipschitz},
\[\|Fu - Fv\|^2 = \int_{\Omega} |f(x, u(x)) - f(x, v(x))|^2 \Delta^n x\leq \int_{\Omega} L^2 |u(x) - v(x)|^2 \Delta^n x= L^2 \|u - v\|^2.\]
Therefore, $\|Fu - Fv\| \leq L \|u - v\|$, and combining with the previous inequality, we obtain the estimate 
$\|Gu - Gv\| \leq \dfrac{L}{\lambda_1} \|u - v\|$.

Finally, we have $\dfrac{L}{\lambda_1} < 1$. Therefore, $G$ is a contraction mapping on the complete metric space $\mathscr{H}$. By Theorem~\ref{thm:CMT}, $G$ has a unique fixed point $u^* \in \mathscr{H}$, which is the unique solution to the boundary value problem \eqref{eq:Dirichlet}.
\end{proof}

\begin{remark}\label{rem:explicit-bound}
By Corollary \ref{cor:first-eigenvalue}, the condition $\dfrac{L}{\lambda_1} < 1$ is satisfied whenever
$L < \displaystyle\sum_{i=1}^{n} \frac{4}{(b_i - a_i)^2}.$
This provides an explicit criterion in terms of the domain geometry.
\end{remark}

\section{Proof of Theorem \ref{thm:global}}\label{sec:proof-global}

In this section we prove Theorem \ref{thm:global}. Throughout we work in the Hilbert space $\mathscr{H}$ defined in Section~\ref{sec:intro} with inner product \eqref{eq:nD-inner-product}, and we use the linear operator $A\colon D\subset \mathscr{H}\to \mathscr{H}$ given by \eqref{eq:nD-operator}. The first eigenvalue of the associated eigenvalue problem \eqref{eq:nD-evp} is denoted by $\lambda_1>0$ as in Corollary~\ref{cor:first-eigenvalue}.

We first justify that the nonlinear term in \eqref{eq:Dirichlet} gives a well-defined operator on $\mathscr{H}$.
\begin{lemma}\label{lem:measurable-global}
Let $f\colon\Omega\times\mathbb{R}\to\mathbb{R}$ be continuous. Then,
\begin{enumerate}
\item[(i)] for each fixed $\eta\in\mathbb{R}$, the function
$x \mapsto f(x,\eta)$
is $\Delta_1\Delta_2\cdots\Delta_n$-measurable on $\Omega$,
\item[(ii)] if $u\colon\Omega\to\mathbb{R}$ is $\Delta_1\Delta_2\cdots\Delta_n$-measurable, then the composition
$x \mapsto f(x,u(x))$
is $\Delta_1\Delta_2\cdots\Delta_n$-measurable on $\Omega$.
\end{enumerate}
\end{lemma}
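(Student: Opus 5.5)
The plan is to reduce both parts to two facts: continuous functions on a product time scale are Borel measurable, and the $\Delta$-measurable sets of the Lebesgue $\Delta$-measure of \cite{Bohner-Guseinov-Lebesgue} contain the Borel sets of $\mathbb{T}_1\times\cdots\times\mathbb{T}_n$ (with the topology inherited from $\mathbb{R}^n$). The second fact I will take from the construction of $\mu_\Delta$ in \cite{Bohner-Guseinov-Lebesgue}. There the measure is obtained by Carath\'eodory extension from the semiring of rectangles $\prod_i [\alpha_i,\beta_i)\cap\mathbb{T}_i$. Such rectangles generate the relative Borel $\sigma$-algebra: every relatively open subset of the product time scale is a countable union of these half-open boxes intersected with the time scale, and single points are also captured, since $\{t\}=[t,\sigma(t))$ when $t$ is right-scattered.

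For (i), I fix $\eta\in\mathbb{R}$. The map $x\mapsto f(x,\eta)$ is the composition of the continuous embedding $x\mapsto(x,\eta)$ with the continuous $f$, so it is continuous on $\Omega$. Hence for every open $U\subset\mathbb{R}$ the preimage $\{x\in\Omega: f(x,\eta)\in U\}$ is relatively open in $\Omega$. Being relatively open, it is Borel and therefore $\Delta_1\cdots\Delta_n$-measurable.

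For (ii), I will use the Carath\'eodory-function argument. First I approximate $u$ pointwise by simple measurable functions $u_k=\sum_j c_{k,j}\chi_{E_{k,j}}$ with $E_{k,j}$ measurable; this is standard for any measurable function on a measure space. Then
\[
f(x,u_k(x))=\sum_j f(x,c_{k,j})\chi_{E_{k,j}}(x).
\]
This is a finite (or countable) sum of products of functions that are measurable by (i), so it is measurable. Continuity of $f$ in its second variable gives $f(x,u_k(x))\to f(x,u(x))$ for every $x$. A pointwise limit of measurable functions is measurable, so $x\mapsto f(x,u(x))$ is measurable.

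As an alternative for (ii), one could observe that $x\mapsto(x,u(x))$ is measurable from $(\Omega,\mathcal{M}_\Delta)$ into $\Omega\times\mathbb{R}$ with its product Borel $\sigma$-algebra, and that $f$ is Borel. I prefer the simple-function route because it avoids identifying the Borel $\sigma$-algebra of $\Omega\times\mathbb{R}$ with a product $\sigma$-algebra; separability would make that identification fine anyway.

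The main obstacle is the step ``Borel sets are $\Delta$-measurable'' in (i). That is where the time-scale measure theory actually enters, and I will justify it by citing the rectangle-semiring construction in \cite{Bohner-Guseinov-Lebesgue}.
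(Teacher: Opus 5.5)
Your proof is correct. Part (i) and the key input match the paper. The key input is that relatively Borel subsets of the product time scale are $\Delta_1\cdots\Delta_n$-measurable, via the rectangle-semiring construction in \cite{Bohner-Guseinov-Lebesgue}. Your sketch of why the rectangles generate the relative Borel $\sigma$-algebra says a little more than the paper does. The Bohner--Guseinov rectangles have endpoints in the $\mathbb{T}_i$, but any box $[\alpha,\beta)\cap\mathbb{T}_i$ can be rewritten with such endpoints, so that step is fine.

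For (ii) you take a genuinely different route. The paper forms $U(x)=(x,u(x))$, declares it measurable into $\Omega\times\mathbb{R}$ with the product $\sigma$-algebra, and composes with the Borel function $f$. This silently uses $\mathcal{B}(\Omega\times\mathbb{R})=\mathcal{B}(\Omega)\otimes\mathcal{B}(\mathbb{R})$, which holds because both factors are separable metric spaces. You instead approximate $u$ by simple functions and pass to the limit using only continuity of $f$ in $\eta$.

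Your route genuinely uses part (i), whereas the paper's proof of (ii) never does. It also works for any Carath\'eodory $f$ (measurable in $x$, continuous in $\eta$), and it avoids the $\sigma$-algebra identification. The paper's argument is a one-liner, but it needs joint continuity of $f$ and that unstated identification.

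One small precision: write each $u_k$ in canonical form, so that the sets $E_{k,j}$ partition $\Omega$, including the level set $\{u_k=0\}$. Otherwise the identity $f(x,u_k(x))=\sum_j f(x,c_{k,j})\chi_{E_{k,j}}(x)$ fails where $u_k=0$, since there the left side equals $f(x,0)$.
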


\begin{proof}
Each time scale $\mathbb{T}_i$ is a closed subset of $\mathbb{R}$ with the subspace topology. Thus its closure $\overline{\Omega} := \displaystyle\prod_{i=1}^{n} [a_i,b_i]\cap\mathbb{T}_i$ is compact in the product topology. The $\Delta$-measure on $\Omega$ is constructed as a Lebesgue-type measure via the Carath\'eodory extension from half-open intervals; in particular, every Borel subset of $\overline{\Omega}$ is $\Delta_1\Delta_2\cdots\Delta_n$-measurable \cite{Bohner-Guseinov-Lebesgue}.

To prove (i), fix $\eta\in\mathbb{R}$. Since $f$ is continuous on $\Omega\times\mathbb{R}$, the map
$x \mapsto f(x,\eta)$
obtained by restricting $f$ to $\Omega\times\{\eta\}$, is continuous with respect to the product topology on $\Omega$. Hence it is Borel measurable, and therefore $\Delta_1\Delta_2\cdots\Delta_n$-measurable.

To prove (ii), let $u\colon\Omega\to\mathbb{R}$ be $\Delta_1\Delta_2\cdots\Delta_n$-measurable. Define
$U\colon\Omega\to\Omega\times\mathbb{R}$, $U(x) = (x,u(x)).$
The first coordinate $x\mapsto x$ is measurable by definition of the underlying measure space, and the second coordinate $x\mapsto u(x)$ is measurable by hypothesis. Hence $U$ is measurable with respect to the product $\sigma$-algebra on $\Omega\times\mathbb{R}$. Since $f$ is continuous on $\Omega\times\mathbb{R}$, it is Borel measurable. Therefore the composition
$x \mapsto f(x,u(x)) = f\bigl(U(x)\bigr)$
is $\Delta_1\Delta_2\cdots\Delta_n$-measurable on $\Omega$.
\end{proof}

We now define the nonlinear operator.

\begin{lemma}\label{lem:nemytskii-global}
Assume that $f$ is continuous and satisfies the Lipschitz condition \eqref{eq:Lipschitz}. Define
\[
(Fu)(x) := f(x,u(x)), \qquad x\in\Omega,\quad u\in \mathscr{H}.
\]
Then,
\begin{enumerate}
\item[(i)] $F\colon \mathscr{H}\to \mathscr{H}$ is well-defined.
\item[(ii)] $F$ is globally Lipschitz with constant $L$, i.e.
\begin{equation}\label{eq:F-Lipschitz-H-global}
\|Fu - Fv\| \le L \|u-v\| \quad \text{for all } u,v\in \mathscr{H}.
\end{equation}
\end{enumerate}
\end{lemma}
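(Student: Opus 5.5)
For part (i), I first use Lemma~\ref{lem:measurable-global}(ii). Since $f$ is continuous and every $u\in\mathscr{H}$ is $\Delta_1\Delta_2\cdots\Delta_n$-measurable, the composition $x\mapsto f(x,u(x))$ is measurable. So the only issue is square-integrability.

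For that I follow the computation already used in the proof of Theorem~\ref{thm:main}. Taking $\gamma=0$ in \eqref{eq:Lipschitz} gives $|f(x,u(x))|\le L|u(x)|+|f(x,0)|$. I then need a bound $|f(x,0)|\le M$ on $\Omega$. This is the one step needing care, because $f$ is only assumed continuous on $\Omega\times\mathbb{R}$, not on $\overline{\Omega}\times\mathbb{R}$.

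\begin{itemize}
\item If $f(\cdot,0)$ extends continuously to the compact set $\overline{\Omega}$ (as implicitly assumed in Section~\ref{sec:main}), boundedness is immediate.
\item Otherwise, I would weaken the claim and use only that $f(\cdot,0)\in\mathscr{H}$, i.e.\ $\int_\Omega f(x,0)^2\,\Delta^n x<\infty$. That is exactly what the estimate needs.
\end{itemize}

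I would state this explicitly, either as an assumption or as following from boundedness of $f(\cdot,0)$ on the bounded box. Granting it, the inequality $(p+q)^2\le 2p^2+2q^2$ gives
\[
\|Fu\|^2\le 2L^2\|u\|^2+2\|f(\cdot,0)\|^2 \le 2L^2\|u\|^2+2M^2|\Omega|<\infty,
\]
with $|\Omega|=\prod_i(b_i-a_i)$. Hence $Fu\in\mathscr{H}$.

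Note that $\mathscr{H}$ consists of functions on $\prod_i[a_i,b_i)\cap\mathbb{T}_i$, while $f$ is given on $\Omega$. The difference lies in the faces where some $x_i=a_i$, and these carry positive $\Delta$-measure when $a_i$ is right-scattered. I would handle this by interpreting $F$ on those faces through the same extension of $f$ (or of $f(\cdot,0)$). Since the Lipschitz estimate is pointwise, nothing changes.

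Part (ii) is a direct pointwise estimate. By \eqref{eq:Lipschitz}, for $u,v\in\mathscr{H}$ we have $|f(x,u(x))-f(x,v(x))|^2\le L^2|u(x)-v(x)|^2$ for every $x$. Integrating against the product $\Delta$-measure, using monotonicity of the integral, gives $\|Fu-Fv\|^2\le L^2\|u-v\|^2$, and taking square roots yields \eqref{eq:F-Lipschitz-H-global}.

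As a by-product, $F$ is continuous on $\mathscr{H}$, which will be needed for Theorem~\ref{thm:LS}. The main obstacle is therefore only the integrability of $f(\cdot,0)$ in part (i); the rest is routine.
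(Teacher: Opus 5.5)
Your argument is the paper's proof essentially line for line. For (i) you get measurability from Lemma~\ref{lem:measurable-global}(ii), then combine the bound $|f(x,u(x))|\le L|u(x)|+|f(x,0)|$ with a uniform bound on $f(\cdot,0)$ and integrate over a set of finite $\Delta$-measure; for (ii) you integrate the pointwise Lipschitz estimate. Your caveat is correct and more careful than the paper: the paper obtains $|f(x,0)|\le M$ from compactness of $\overline{\Omega}$, which tacitly needs $f$ continuous up to $\overline{\Omega}\times\mathbb{R}$ (as assumed in Section~\ref{sec:main}), and without such a hypothesis part (i) actually fails (e.g.\ $\mathbb{T}=\mathbb{R}$, $n=1$, $f(x,\eta)=(x-a)^{-1}$), so stating the extra assumption explicitly is the right fix.
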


\begin{proof}
To prove (i), let $u\in \mathscr{H}$. By definition of $\mathscr{H}$, $u$ is $\Delta_1\cdots\Delta_n$-measurable and square integrable over $\Omega$. By Lemma~\ref{lem:measurable-global}(ii), the function $x\mapsto f(x,u(x))$ is $\Delta_1\cdots\Delta_n$-measurable.

We now show $f(\cdot,u(\cdot))$ is square integrable over $\Omega$. From \eqref{eq:Lipschitz} with $\gamma=0$ we obtain
\[
|f(x,u(x))|
\le |f(x,u(x)) - f(x,0)| + |f(x,0)|
\le L |u(x)| + |f(x,0)|.
\]
Since $f$ is continuous and $\overline{\Omega}$ is compact, there exists $M\ge 0$ such that
$|f(x,0)| \le M \quad \text{for all } x\in\Omega.$
Hence
$|f(x,u(x))|^2 \le 2L^2 |u(x)|^2 + 2M^2$
for all $x\in\Omega$. Integrating over $\Omega$ and using $u\in \mathscr{H}$ and the finiteness of $\mu_\Delta(\Omega)$, we obtain
\begin{align*}
\|Fu\|^2
&= \int_{\Omega} |f(x,u(x))|^2\,\Delta^n x\\
&\le 2L^2 \int_{\Omega} |u(x)|^2\,\Delta^n x + 2M^2 \int_{\Omega} 1\,\Delta^n x\\
&= 2L^2 \|u\|^2 + 2M^2 |\Omega| < \infty.
\end{align*}
Thus $Fu\in \mathscr{H}$, and $F\colon\mathscr{H}\to \mathscr{H}$ is well-defined. To prove (ii), let $u,v\in \mathscr{H}$. By \eqref{eq:Lipschitz}, for all $x\in\Omega$,
\[
|f(x,u(x)) - f(x,v(x))| \le L |u(x)-v(x)|.
\]
Squaring and integrating,
\begin{align*}
\|Fu - Fv\|^2
&= \int_{\Omega} |f(x,u(x)) - f(x,v(x))|^2\,\Delta^n x \le L^2 \int_{\Omega} |u(x)-v(x)|^2\,\Delta^n x = L^2 \|u-v\|^2.
\end{align*}
Taking square roots yields \eqref{eq:F-Lipschitz-H-global}.
\end{proof}
We next prove an inequality relating $A$ and the first eigenvalue $\lambda_1$.
\begin{lemma}\label{lem:spectral-ineq}
Let $A$ and $\lambda_1$ be as above. Then, for every $u\in \mathscr{D}$,
\begin{equation}\label{eq:spectral-ineq}
\langle Au,u\rangle \ge \lambda_1 \|u\|^2.
\end{equation}
\end{lemma}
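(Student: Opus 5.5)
The plan is to prove \eqref{eq:spectral-ineq} by expanding $u$ in the orthonormal eigenbasis of Theorem~\ref{thm:nD-basis} and using the symmetry of $A$ from Theorem~\ref{thm:nD-self-adjoint}. Fix $u\in\mathscr{D}$ and write $c_k=\langle u,u_k\rangle$ over all multi-indices $k=(p_1,\dots,p_n)$. By Theorem~\ref{thm:nD-inverse} the expansion $u=\sum_k c_k u_k$ converges in $\mathscr{H}$, and Parseval gives $\|u\|^2=\sum_k c_k^2$.

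The next step is to compute the Fourier coefficients of $Au\in\mathscr{H}$. Each product eigenfunction $u_k=\prod_i\phi^{(i)}_{p_i}$ lies in $\mathscr{D}$: it is continuous, vanishes on $\partial\Omega$ because each factor vanishes at $a_i,b_i$, and it has the required mixed derivatives since each one-dimensional $\phi^{(i)}_{p_i}$ lies in $D$. It also satisfies $Au_k=\lambda_k u_k$ by Theorem~\ref{thm:separation}. Symmetry \eqref{eq:nD-symmetric} then gives
\[
\langle Au,u_k\rangle=\langle u,Au_k\rangle=\lambda_k c_k .
\]
Since $Au\in\mathscr{H}$ and $\{u_k\}$ is complete, Parseval applied to the pair $(Au,u)$ yields
\[
\langle Au,u\rangle=\sum_k \langle Au,u_k\rangle\langle u,u_k\rangle=\sum_k\lambda_k c_k^2 .
\]
Because every $\lambda_k=\sum_i\mu^{(i)}_{p_i}\ge\sum_i\mu^{(i)}_1=\lambda_1$ by Corollary~\ref{cor:first-eigenvalue}, this sum is at least $\lambda_1\sum_k c_k^2=\lambda_1\|u\|^2$.

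The main obstacle is justifying that $u_k\in\mathscr{D}$, so that \eqref{eq:nD-symmetric} may be applied with $v=u_k$. This needs $\phi^{(i)}_{p_i}\in D$ in one dimension, which holds because $\phi^{(i)}_{p_i}=\lambda A^{-1}\phi^{(i)}_{p_i}$ and $A^{-1}$ maps $H$ into $D$ by Theorem~\ref{thm:A-inverse}. The product then inherits the regularity conditions (iii)--(iv) of Definition~\ref{def:D} coordinatewise. The generalized Parseval identity $\langle f,g\rangle=\sum_k\langle f,u_k\rangle\langle g,u_k\rangle$ is standard for an orthonormal basis, so no convergence issues arise beyond those already handled in Theorem~\ref{thm:nD-inverse}.
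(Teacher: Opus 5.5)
Your proposal is correct and follows the same route as the paper: expand $u$ in the product eigenbasis of Theorem~\ref{thm:nD-basis}, apply Parseval, and use $\lambda_k\ge\lambda_1$. The only difference is that you justify the coefficient identity $\langle Au,u_k\rangle=\lambda_k c_k$ through the symmetry \eqref{eq:nD-symmetric}, after checking that $u_k\in\mathscr{D}$. The paper instead writes $Au=\sum_k\lambda_k c_k u_k$ without comment, which applies the unbounded operator $A$ termwise to an infinite series; your argument supplies the justification that step needs.
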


\begin{proof}
By Theorem~\ref{thm:nD-basis}, there exists an orthonormal basis $\{u_k\}$ of $\mathscr{H}$ consisting of eigenfunctions of $A$ with corresponding eigenvalues $\{\lambda_k\}$ satisfying
$0 < \lambda_1 \le \lambda_2 \le \lambda_3 \le \cdots.$
For $u\in \mathscr{D}$ we write
$u = \sum_{k} c_k u_k, \qquad c_k = \langle u,u_k\rangle,$
with convergence in $\mathscr{H}$. Then
$\|u\|^2 = \displaystyle\sum_{k} c_k^2$
by Parseval's identity. On the other hand,
$Au = \displaystyle\sum_{k} \lambda_k c_k u_k,$ so 
\[\left\langle Au,u\right\rangle
= \left\langle \displaystyle\sum_{k} \lambda_k c_k u_k,\; \sum_{j} c_j u_j \right\rangle
= \displaystyle\sum_{k} \lambda_k c_k^2.\]
Using $\lambda_k \ge \lambda_1$ for all $k$, we obtain
\[
\langle Au,u\rangle = \sum_{k} \lambda_k c_k^2
\ge \lambda_1 \sum_{k} c_k^2
= \lambda_1 \|u\|^2,
\]
which is \eqref{eq:spectral-ineq}, completing the proof of the lemma.
\end{proof}
We now combine the properties of $A^{-1}$ and $F$.
\begin{lemma}\label{lem:compact-G}
Let $G\colon \mathscr{H}\to \mathscr{H}$ be defined by $Gu := A^{-1} Fu$ for $u\in \mathscr{H}$. Then $G$ is continuous and compact.
\end{lemma}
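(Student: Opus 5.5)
Continuity of $G$ follows by composing the two Lipschitz bounds already available. By Lemma~\ref{lem:nemytskii-global}(ii), $\|Fu-Fv\|\le L\|u-v\|$. By \eqref{eq:A-inverse-bound}, $A^{-1}$ is a bounded linear operator on $\mathscr{H}$ with $\|A^{-1}\|\le 1/\lambda_1$. Hence
\[
\|Gu-Gv\|\le \frac{L}{\lambda_1}\|u-v\|,
\]
so $G$ is Lipschitz and in particular continuous. Note that $G$ is not assumed to be a contraction here, since $L$ may exceed $\lambda_1$.

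For compactness, the plan is to show that $A^{-1}$ is a compact linear operator on $\mathscr{H}$. Then $G=A^{-1}\circ F$ is compact: $F$ maps bounded sets to bounded sets, because Lemma~\ref{lem:nemytskii-global}(i) gives $\|Fu\|^2\le 2L^2\|u\|^2+2M^2|\Omega|$, and a compact linear map sends bounded sets to relatively compact ones.

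To prove compactness of $A^{-1}$ I would use the diagonal representation \eqref{eq:A-inverse-expansion} in the orthonormal basis $\{u_k\}$ of Theorem~\ref{thm:nD-basis}. For $m\in\mathbb{N}$, define the finite-rank truncation $P_m u=\sum_{\lambda_k\le m} c_k\lambda_k^{-1}u_k$. Parseval's equality gives
\[
\|A^{-1}u-P_m u\|^2=\sum_{\lambda_k>m}\frac{c_k^2}{\lambda_k^2}\le \frac{1}{m^2}\|u\|^2,
\]
so $P_m\to A^{-1}$ in operator norm. Operator-norm limits of compact operators are compact, so it remains to see that each $P_m$ has finite rank. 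This requires only finitely many multi-indices with $\lambda_k\le m$.

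This finiteness is the step I expect to be the main obstacle. Since $\lambda_{p_1,\dots,p_n}=\sum_i\mu^{(i)}_{p_i}$ and every $\mu^{(i)}_{p}>0$, the condition $\lambda_k\le m$ forces $\mu^{(i)}_{p_i}\le m$ for every $i$. By Theorem~\ref{thm:1D-expansion} and Remark~\ref{rem:dimension}, each one-dimensional sequence is either finite or strictly increasing with $\mu^{(i)}_p\to\infty$. Therefore each $p_i$ ranges over a finite set, and the set of admissible multi-indices is finite.

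Alternatively, and as a consistency check, when all $\mathbb{T}_i$ are discrete and finite, $\mathscr{H}$ is finite dimensional and compactness is trivial. In general, one could also write $A^{-1}$ as the tensor product of the one-dimensional Green's kernels; that would give a Hilbert--Schmidt kernel, since $\sum_k\lambda_k^{-2}<\infty$ when $n=1$ by Theorem~\ref{thm:A-inverse}. I would nonetheless use the spectral truncation argument, because it needs only the results stated above.
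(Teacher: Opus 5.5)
Your proof is correct and uses the same decomposition as the paper: $G=A^{-1}\circ F$, with $F$ Lipschitz and $A^{-1}$ compact. It is more complete at the one step that matters.

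The paper's proof asserts that $A^{-1}$ is ``linear, bounded, and compact'' by citing Theorem~\ref{thm:nD-inverse}. That theorem, however, only gives the diagonal representation \eqref{eq:A-inverse-expansion} and the norm bound \eqref{eq:A-inverse-bound}. Compactness is proved in the paper only for $n=1$ (Theorem~\ref{thm:A-inverse}, via the Green's function and Arzel\`a--Ascoli).

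Your truncation argument supplies the missing $n$-dimensional step. The eigenvalues satisfy $\lambda_{p_1,\dots,p_n}=\sum_i\mu^{(i)}_{p_i}$ with every $\mu^{(i)}_p>0$. Each one-dimensional sequence is either finite or tends to $\infty$, which itself rests on the one-dimensional compactness. Hence only finitely many $\lambda_k$ lie below any level, and $A^{-1}$ is an operator-norm limit of finite-rank operators. You also state explicitly that $F$ maps bounded sets to bounded sets. The composition argument needs this, and the paper leaves it implicit.

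Your closing aside is wrong, though you rightly don't rely on it: $A^{-1}$ is not the tensor product of the one-dimensional Green's operators. Since $A=\sum_i A_i$ with each $A_i$ acting in $x_i$ alone, $A^{-1}$ has eigenvalues $\bigl(\sum_i\mu^{(i)}_{p_i}\bigr)^{-1}$. The tensor product instead has eigenvalues $\prod_i\bigl(\mu^{(i)}_{p_i}\bigr)^{-1}$, so the kernel of $A^{-1}$ is not the product of the one-dimensional Green's functions. Moreover, for $\mathbb{T}_i=\mathbb{R}$ and $n\ge 4$ one has $\mu^{(i)}_p\sim p^2$, so $\sum_k\lambda_k^{-2}=\infty$ and $A^{-1}$ is not Hilbert--Schmidt at all. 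The spectral truncation argument you chose works in every dimension.
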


\begin{proof}
By Lemma~\ref{lem:nemytskii-global}, $F\colon \mathscr{H}\to \mathscr{H}$ is well-defined and continuous. By Theorem~\ref{thm:nD-inverse}, the operator $A^{-1}\colon \mathscr{H}\to \mathscr{H}$ is linear, bounded, and compact. The composition $G = A^{-1}\circ F$ is therefore continuous and compact.
\end{proof}
We next obtain an a priori bound for solutions of the homotopy equation $u = \tau Gu$.
\begin{lemma}\label{lem:apriori}
Assume that $f$ satisfies \eqref{eq:energy-condition}. Let $G$ be as in Lemma~\ref{lem:compact-G}. Then the set
\[\mathcal{S} = \{u\in \mathscr{H} \colon u = \tau Gu \text{ for some } \tau\in[0,1]\}\]
is bounded in $\mathscr{H}$.
\end{lemma}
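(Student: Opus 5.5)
We follow the operator reformulation used in the proof of Theorem~\ref{thm:main}, in which \eqref{eq:Dirichlet} is written as $Au = Fu$ with $A = -\Delta_{\mathbb{T}}$ on $\mathscr{D}$, so that $G = A^{-1}F$. The plan is a standard energy estimate. Test the homotopy equation against $u$ itself, bound $\langle Au,u\rangle$ from below by $\lambda_1\|u\|^2$ using Lemma~\ref{lem:spectral-ineq}, and bound $\langle Fu,u\rangle$ from above using the one-sided condition \eqref{eq:energy-condition}. The gap $\lambda_1-\alpha>0$ then yields a uniform bound.

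Concretely, let $u\in\mathcal{S}$, so that $u=\tau Gu$ for some $\tau\in[0,1]$. If $\tau=0$, then $u=0$ and there is nothing to prove. If $\tau\in(0,1]$, then $u=A^{-1}(\tau Fu)$. Since $A^{-1}$ maps $\mathscr{H}$ into $\mathscr{D}$ (Theorem~\ref{thm:nD-inverse}), we get $u\in\mathscr{D}$ and $Au=\tau Fu$. Taking the inner product with $u$ and applying Lemma~\ref{lem:spectral-ineq} gives
\[
\lambda_1\|u\|^2 \le \langle Au,u\rangle = \tau\int_\Omega f(x,u(x))\,u(x)\,\Delta^n x .
\]
The integral is finite by Cauchy--Schwarz, because $Fu\in\mathscr{H}$ by Lemma~\ref{lem:nemytskii-global}. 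Next we apply \eqref{eq:energy-condition} pointwise with $\eta=u(x)$ and integrate over the finite-measure set $\Omega$:
\[
\int_\Omega f(x,u(x))\,u(x)\,\Delta^n x \le \alpha\|u\|^2 + C|\Omega| .
\]

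Combining the two displays and using $\tau\le 1$ gives $\lambda_1\|u\|^2\le \tau\alpha\|u\|^2+\tau C|\Omega|\le \alpha\|u\|^2 + C|\Omega|$. Note that $\tau\alpha\le\alpha$ holds because $\alpha>0$. Hence
\[
\|u\|^2\le \frac{C|\Omega|}{\lambda_1-\alpha}.
\]
This bound is independent of $\tau$ and of $u$, so $\mathcal{S}$ is bounded.

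The main point needing care is the justification that every element of $\mathcal{S}$ lies in $\mathscr{D}$, which is what makes Lemma~\ref{lem:spectral-ineq} applicable. This comes from the range of $A^{-1}$. The remaining issue is consistency of the sign convention $Au=Fu$ with the one-sided condition, which I will state explicitly in the proof. Everything else is routine integration of a pointwise inequality over a set of finite $\Delta$-measure.
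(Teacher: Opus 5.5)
Your argument is correct and is essentially the paper's proof: you place $u$ in $\mathscr{D}$ via the range of $A^{-1}$, combine Lemma~\ref{lem:spectral-ineq} with \eqref{eq:energy-condition} to get $\lambda_1\|u\|^2\le\tau\langle Fu,u\rangle\le\alpha\|u\|^2+C|\Omega|$, and conclude $\|u\|^2\le C|\Omega|/(\lambda_1-\alpha)$; your explicit use of $\tau\alpha\le\alpha$ and $\tau C|\Omega|\le C|\Omega|$ is slightly more careful than the paper's one-line combination. Your sign worry is justified but harmless for this lemma: with $A=-\Delta_{\mathbb{T}}$, the equation $-\Delta_{\mathbb{T}}u+f(x,u)=0$ actually reads $Au=-Fu$, so fixed points of $G=A^{-1}F$ solve $-\Delta_{\mathbb{T}}u=f(x,u)$ rather than \eqref{eq:Dirichlet}; since the lemma is stated for $G=A^{-1}F$, however, your estimate is valid exactly as written.
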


\begin{proof}
Let $u\in\mathcal{S}$. Then there exists $\tau\in[0,1]$ such that
$u = \tau Gu = \tau A^{-1}Fu.$
Applying $A$ to both sides yields
$Au = \tau Fu.$
Since $A\colon \mathscr{D}\to \mathscr{H}$ and $A^{-1}\colon \mathscr{H}\to \mathscr{D}$, we have $u\in \mathscr{D}$. Taking the inner product with $u$ and using Lemma~\ref{lem:spectral-ineq}, we obtain
\begin{equation}\label{eq:apriori-1}
\lambda_1 \|u\|^2 \le \langle Au,u\rangle = \tau \langle Fu,u\rangle.
\end{equation}
Using the energy condition \eqref{eq:energy-condition} with $\eta = u(x)$, we have
$f(x,u(x))\,u(x) \le \alpha u(x)^2 + C
\quad \text{for all } x\in\Omega.$
Integrating over $\Omega$,
\begin{equation}\label{eq:apriori-2}
\langle Fu,u\rangle
= \int_{\Omega} f(x,u(x))\,u(x)\,\Delta^n x
\le \alpha \|u\|^2 + C |\Omega|.
\end{equation}
Combining \eqref{eq:apriori-1} and \eqref{eq:apriori-2} and using $\tau\le 1$, we obtain
\[
\lambda_1 \|u\|^2
\le \tau \langle Fu,u\rangle
\le \alpha \|u\|^2 + C |\Omega|.
\]
Rearranging,
$(\lambda_1 - \alpha) \|u\|^2 \le C |\Omega|.$
Since $\alpha\in(0,\lambda_1)$, the coefficient $(\lambda_1-\alpha)$ is positive, and hence
\begin{equation}\label{eq:apriori-final}
\|u\|^2 \le \frac{C |\Omega|}{\lambda_1 - \alpha}.
\end{equation}
Thus every $u\in\mathcal{S}$ satisfies \eqref{eq:apriori-final}, so $\mathcal{S}$ is bounded in $\mathscr{H}$.
\end{proof}
We now prove Theorem~\ref{thm:global}.
\begin{proof}[Proof of Theorem \ref{thm:global}]
Let $G\colon \mathscr{H}\to \mathscr{H}$ be defined by $Gu = A^{-1}Fu$ as in Lemma~\ref{lem:compact-G}. By Lemma~\ref{lem:compact-G}, $G$ is continuous and compact. By Lemma~\ref{lem:apriori}, the Leray--Schauder set
$\{u\in \mathscr{H} \colon u = \tau Gu \text{ for some } \tau\in[0,1]\}$
is bounded in $\mathscr{H}$. Therefore, by Theorem~\ref{thm:LS}, there exists $u^*\in \mathscr{H}$ such that
$u^* = Gu^* = A^{-1}F u^*.$
Applying $A$ to both sides, we obtain
$Au^* = Fu^*,$
so $u^*$ is a solution of the operator equation
$Au = Fu.$
Since $A^{-1}\colon \mathscr{H}\to \mathscr{D}$, we have $u^*\in \mathscr{D}$, and hence $u^*$ satisfies the boundary conditions in \eqref{eq:Dirichlet}. This shows that \eqref{eq:Dirichlet} has at least one solution in $\mathscr{D}$, which completes the proof of Theorem~\ref{thm:global}.
\end{proof}
\section{Examples}\label{sec:examples}

In this section we illustrate our main results with explicit examples. These examples demonstrate the scope of Theorems~\ref{thm:main} and~\ref{thm:global}. We first consider a two-dimensional example.
\begin{example}[Two-dimensional domain]\label{ex:2D}
Consider $n = 2$ with $\mathbb{T}_1 = \mathbb{T}_2 = \{0, 1, 2, 3\}$. The domain is
\[\Omega = \{1, 2\} \times \{1, 2\} = \{(1,1), (1,2), (2,1), (2,2)\}.\]
The boundary $\partial\Omega$ consists of all points $(x_1, x_2) \in \mathbb{T}_1 \times \mathbb{T}_2$ where $x_1 \in \{0, 3\}$ or $x_2 \in \{0, 3\}$.

For a function $u\colon \mathbb{T}_1 \times \mathbb{T}_2 \to \mathbb{R}$, the time scale Laplacian at an interior point $(i, j) \in \Omega$ is
\[\Delta_{\mathbb{T}} u(i,j) = u^{\nabla_1\Delta_1}(i,j) + u^{\nabla_2\Delta_2}(i,j) = u(i+1,j) + u(i-1,j) + u(i,j+1) + u(i,j-1) - 4u(i,j).\]
Writing $u_{ij} = u(i,j)$ and using boundary conditions $u = 0$ on $\partial\Omega$, the BVP \eqref{eq:Dirichlet} with $f(x, u) = C$ becomes the system
\[\begin{aligned}
4u_{11} - u_{21} - u_{12} + C &= 0,\\
4u_{12} - u_{22} - u_{11} + C &= 0,\\
4u_{21} - u_{11} - u_{22} + C &= 0,\\
4u_{22} - u_{12} - u_{21} + C &= 0.
\end{aligned}\]
Adding all four equations gives $8(u_{11} + u_{12} + u_{21} + u_{22}) + 4C = 0$. By symmetry of the system, $u_{11} = u_{12} = u_{21} = u_{22}$, and substituting yields
\begin{equation}\label{eq:ex-2D-solution}
u_{11} = u_{12} = u_{21} = u_{22} = -\frac{C}{2}.
\end{equation}

For this two-dimensional domain, the eigenvalues are given by Theorem~\ref{thm:nD-basis} as
\[\lambda_{k,m} = \mu_k^{(1)} + \mu_m^{(2)} = 4\sin^2\left(\frac{\pi k}{6}\right) + 4\sin^2\left(\frac{\pi m}{6}\right), \quad k, m \in \{1, 2\},\]
yielding $\lambda_{1,1} = 1 + 1 = 2$, $\lambda_{1,2} = \lambda_{2,1} = 1 + 3 = 4$, and $\lambda_{2,2} = 3 + 3 = 6$. The first eigenvalue is $\lambda_1 = 2$, which equals $\mu_1^{(1)} + \mu_1^{(2)}$ as stated in Corollary~\ref{cor:first-eigenvalue}.

Since $f(x,u) = C$ satisfies \eqref{eq:Lipschitz} with $L = 0 < \lambda_1 = 2$, Theorem~\ref{thm:main} guarantees the unique solution \eqref{eq:ex-2D-solution}. 
\end{example}

The following example explores the solutions for a one-dimensional hybrid discrete-continuous domain.
\begin{example}[Hybrid continuous-discrete domain]\label{ex:hybrid}
Consider the time scale $
\mathbb{T} = [0,1] \cup \{2, 3\}$, which combines a continuous interval with isolated points. Setting $a = 0$ and $b = 3$, the interior is given by $(0,3) \cap \mathbb{T} = (0,1] \cup \{2\}$. The boundary conditions are $u(0) = u(3) = 0$. On $(0,1)$, the eigenvalue equation $-y^{\nabla\Delta} = \lambda y$ reduces to the classical ODE $-y'' = \lambda y$. With $y(0) = 0$, the solution is $y(t) = A\sin(\sqrt{\lambda} t)$ for $t \in [0,1)$. Continuity of solutions in $\mathscr{D}$ implies then that $y(1)=A\sin(\sqrt{\lambda})$. At the isolated point $t = 2$, the equation $-y^{\nabla\Delta}(2) = \lambda y(2)$ becomes
$-\dfrac{y^{\nabla}(3) - y^{\nabla}(2)}{\mu(2)} = \lambda y(2)$, which the boundary condition reduces to
\begin{equation}\label{eq:ex-hybrid-eq2}
2y(2) - y(1) = \lambda y(2).
\end{equation}
At $t = 1$, matching the continuous and discrete parts through $-y^{\nabla\Delta}(1) = \lambda y(1)$ gives
\begin{equation}\label{eq:ex-hybrid-eq1}
y(2) = (1 - \lambda)y(1) + y'(1^-),
\end{equation}
where $y'(1^-) = A\sqrt{\lambda}\cos(\sqrt{\lambda})$ is the left derivative at $t=1$ from the continuous part.

Eliminating $y(2)$ between \eqref{eq:ex-hybrid-eq2} and \eqref{eq:ex-hybrid-eq1} yields the transcendental equation
\[(\lambda^2 - 3\lambda + 1)\sin(\sqrt{\lambda}) + (2 - \lambda)\sqrt{\lambda}\cos(\sqrt{\lambda}) = 0,\]
which must be solved numerically. The first three eigenvalues are $\lambda_1 \approx 0.840, \lambda_2 \approx 2.600, \lambda_3 \approx 11.907$.

For the problem \eqref{eq:Dirichlet} with $f(x,u) = C$, the Lipschitz constant is $L = 0 < \lambda_1 \approx 0.840$, so Theorem~\ref{thm:main} guarantees existence and uniqueness. We can solve explicitly: on the continuous part, $-u'' + C = 0$ with $u(0) = 0$ gives $u(t) = \dfrac{C}{2}t^2 + Bt$ for some constant $B$. At the discrete points, the equations become
\[\begin{aligned}
u(1) - u(2) + u'(1^-) + 2C &= 0,\\
2u(2) - u(1) + C &= 0.
\end{aligned}\]
Solving yields $B = -\dfrac{11C}{6}$, and the unique solution is 
$u(t) = \dfrac{C}{6}\big(3t^2 - 11t\big)$ for $t \in [0,1]$, and $u(2) = -\dfrac{7C}{6}$.

This shows that the spectral bound $\lambda_1$ depends on the internal structure of the time scale, not merely its endpoints. It also demonstrates that on hybrid domains, the solution involves both continuous functions (on $[0,1]$) and discrete values (at $t = 2$), unified by the time scale framework.

Now consider the resonance case $f(x,u) = -\lambda_1 u$. The Lipschitz constant is $L = \lambda_1$, so the strict inequality $L < \lambda_1$ required by Theorem~\ref{thm:main} fails. However, the one-sided condition \eqref{eq:energy-condition} is satisfied since $f(x,\eta)\eta = -\lambda_1\eta^2 \leq 0$, so Theorem~\ref{thm:global} guarantees existence.

On the continuous part, $-u'' - \lambda_1 u = 0$ with $u(0) = 0$ gives $u(t) = A\sin(\sqrt{\lambda_1}\, t)$ for an arbitrary constant $A$. At $t = 2$, the equation $(2 - \lambda_1)u(2) = u(1)$ determines
$u(2) = \dfrac{A\sin(\sqrt{\lambda_1})}{2 - \lambda_1}$. The equation at $t = 1$ is automatically satisfied for any value of $A$. Thus the family of solutions is
\[u(t) = A\sin(\sqrt{\lambda_1}\, t) \approx A\sin(0.917\, t) \text{ for } t \in [0,1], \quad u(2) \approx 0.684 A,\]
for arbitrary $A \in \mathbb{R}$. This family demonstrates that Theorem~\ref{thm:global} gives existence but not uniqueness.
\end{example}

Table~\ref{tab:comparison} compares $\lambda_1$ across different time scales with the same endpoints $a = 0$, $b = 3$.

\begin{table}[h]
\centering
\begin{tabular}{|c|c|c|}
\hline
Time scale $\mathbb{T}$ & Interior & $\lambda_1$ \\
\hline
$[0,3]$ (continuous) & $(0,3)$ & $\pi^2/9 \approx 1.097$ \\
\hline
$\{0,1,2,3\}$ (discrete) & $\{1,2\}$ & $1$ \\
\hline
$[0,1] \cup \{2,3\}$ (hybrid) & $(0,1] \cup \{2\}$ & $\approx 0.840$ \\
\hline
\end{tabular}
\caption{First eigenvalue for different time scales with endpoints $0$ and $3$.}
\label{tab:comparison}
\end{table}

We now consider a one-dimensional case with the discrete time scale $\mathbb{T} = \{0, 1, 2, 3\}$ so that $a = 0$, $b = 3$, and the interior is $(0,3) \cap \mathbb{T} = \{1, 2\}$. For a function $u\colon \mathbb{T} \to \mathbb{R}$, the boundary conditions $u(0) = u(3) = 0$ hold, and we seek values $u_1 := u(1)$ and $u_2 := u(2)$. We compute the $\nabla\Delta$ operator in this case. For $t \in \{1, 2\}$, we have $u^{\nabla\Delta}(t) = u^{\nabla}(\sigma(t)) - u^{\nabla}(t) = u(\sigma(t)) - 2u(t) + u(\rho(t))$. Thus the boundary value problem \eqref{eq:Dirichlet} becomes the system
\[\begin{aligned}
-(u_2 - 2u_1 + u_0) + f(1, u_1) &= 0,\\
-(u_3 - 2u_2 + u_1) + f(2, u_2) &= 0,
\end{aligned}\]
which, using $u_0 = u_3 = 0$, simplifies to
\begin{equation}\label{eq:ex-system-reduced}
\begin{aligned}
2u_1 - u_2 + f(1, u_1) &= 0,\\
-u_1 + 2u_2 + f(2, u_2) &= 0.
\end{aligned}
\end{equation}

For this discrete domain, the eigenvalue problem \eqref{eq:linear-evp} has eigenvalues
$\lambda_k = 4\sin^2\left(\frac{\pi k}{6}\right)$ for $k = 1,2$ giving $\lambda_1 = 4\sin^2\left(\dfrac{\pi}{6}\right) = 1$ and $\lambda_2 = 4\sin^2\left(\dfrac{\pi}{3}\right) = 3$. The corresponding eigenfunctions are $\phi_1(t) = \sin\left(\dfrac{\pi t}{3}\right)$ and $\phi_2(t) = \sin\left(\dfrac{2\pi t}{3}\right)$ for $t \in \mathbb{T}$, which satisfy the boundary conditions $\phi_k(0) = \phi_k(3) = 0$. At the interior points, these take values $\phi_1(1) = \phi_1(2) = \dfrac{\sqrt{3}}{2}$ and $\phi_2(1) = -\phi_2(2) = \dfrac{\sqrt{3}}{2}$.

The following examples demonstrate different behaviors depending on how the nonlinearity $f$ relates to the hypotheses of Theorems~\ref{thm:main} and~\ref{thm:global}. Table~\ref{tab:examples} summarizes them.

\begin{table}[h]
\centering
\begin{tabular}{|c|c|c|c|}
\hline
 & Thm~\ref{thm:main} & Thm~\ref{thm:global} only & Neither \\
\hline
Ex.\ + Unique & Ex.~\ref{ex:constant}, \ref{ex:f-of-x} & Ex.~\ref{ex:negative-linear} & Ex.~\ref{ex:positive-linear} \\
\hline
Ex.\ + Non-unique & impossible & Ex.~\ref{ex:resonance} & Ex.~\ref{ex:3D} \\
\hline
Non-existence & impossible & impossible & Ex.~\ref{ex:no-solution} \\
\hline
\end{tabular}
\caption{Classification of examples by theorem applicability and solution behavior.}
\label{tab:examples}
\end{table}
First we consider the case when $f$ is constant.
\begin{example}[Constant nonlinearity]\label{ex:constant}
Let $f(x, u) = C$ for some constant $C \in \mathbb{R}$. This function satisfies \eqref{eq:Lipschitz} with $L = 0 < \lambda_1 = 1$, so Theorem~\ref{thm:main} guarantees a unique solution. The system \eqref{eq:ex-system-reduced} becomes
\[\begin{aligned}
2u_1 - u_2 + C &= 0,\\
-u_1 + 2u_2 + C &= 0.
\end{aligned}\]
Adding these equations gives $u_1 + u_2 = -2C$. Subtracting gives $3u_1 - 3u_2 = 0$, so $u_1 = u_2$. Thus $u_1 = u_2 = -C$, which is the unique solution.
\end{example}
Now we consider the case where $f$ does not depend on $u$.
\begin{example}[Position-dependent nonlinearity]\label{ex:f-of-x}
Let $f(x, u) = g(x)$ for some function $g\colon\Omega \to \mathbb{R}$ independent of $u$. This satisfies \eqref{eq:Lipschitz} with $L = 0 < \lambda_1$, so Theorem~\ref{thm:main} applies. Setting $g_1 = g(1)$ and $g_2 = g(2)$, the system \eqref{eq:ex-system-reduced} becomes
\[\begin{aligned}
2u_1 - u_2 + g_1 &= 0,\\
-u_1 + 2u_2 + g_2 &= 0.
\end{aligned}\]
Solving this linear system yields the unique solution $u_1 = \dfrac{-2g_1 - g_2}{3}$ and $u_2 = \dfrac{-g_1 - 2g_2}{3}$.
\end{example}

We now show an example for which Theorem~\ref{thm:main} does not apply, but Theorem~\ref{thm:global} does while having a unique solution.
\begin{example}[Negative linear: existence and uniqueness]\label{ex:negative-linear}
Let $f(x, u) = -cu$ with $c > \lambda_1 = 1$ but $c \neq \lambda_k$ for any $k$. For concreteness, take $c = 2$. The Lipschitz constant is $L = 2 > \lambda_1$, so Theorem~\ref{thm:main} does not apply. However, the one-sided condition \eqref{eq:energy-condition} is satisfied since $f(x, \eta)\eta = -2\eta^2 \leq 0 < \alpha\eta^2 + C$ for any $\alpha \in (0, \lambda_1)$ and $C \geq 0$. Thus Theorem~\ref{thm:global} guarantees existence.

The system \eqref{eq:ex-system-reduced} becomes
\[\begin{aligned}
2u_1 - u_2 - 2u_1 &= 0,\\
-u_1 + 2u_2 - 2u_2 &= 0,
\end{aligned}\]
which simplifies to $u_2 = 0$ and $u_1 = 0$. The unique solution is $u \equiv 0$. Although Theorem~\ref{thm:global} only guarantees existence, uniqueness holds here because $c = 2$ is not an eigenvalue of the linear problem.
\end{example}
We now show a similar example without uniqueness.
\begin{example}[Resonance: existence without uniqueness]\label{ex:resonance}
Let $f(x, u) = -\lambda_1 u = -u$. The Lipschitz constant is $L = \lambda_1 = 1$, so the strict inequality $L < \lambda_1$ in Theorem~\ref{thm:main} fails. However, as in Example~\ref{ex:negative-linear}, the one-sided condition \eqref{eq:energy-condition} is satisfied since $f(x, \eta)\eta = -\eta^2 \leq 0$. Thus Theorem~\ref{thm:global} guarantees at least one solution exists.

The system \eqref{eq:ex-system-reduced} becomes
\[\begin{aligned}
2u_1 - u_2 - u_1 &= 0,\\
-u_1 + 2u_2 - u_2 &= 0,
\end{aligned}\]
which simplifies to $u_1 = u_2$. Every function of the form $u_1 = u_2 = t$ for $t \in \mathbb{R}$ is a solution. This infinite family of solutions corresponds to multiples of the first eigenfunction $\phi_1$. This example shows that Theorem~\ref{thm:global} provides existence but not uniqueness, and that the strict inequality $L < \lambda_1$ in Theorem~\ref{thm:main} is necessary for uniqueness.
\end{example}

The next example demonstrates that Theorems~\ref{thm:main} and~\ref{thm:global} provide sufficient but not necessary conditions for existence and uniqueness.

\begin{example}[Positive linear: existence and uniqueness without theorems]\label{ex:positive-linear}
Let $f(x, u) = cu$ with $c > \lambda_1 = 1$. For concreteness, take $c = 2$. The Lipschitz constant is $L = 2 > \lambda_1$, so Theorem~\ref{thm:main} does not apply. The one-sided condition requires $f(x, \eta)\eta = 2\eta^2 \leq \alpha\eta^2 + C$ for some $\alpha < \lambda_1 = 1$, which fails for large $|\eta|$. Thus Theorem~\ref{thm:global} also does not apply. 

Nevertheless, the system \eqref{eq:ex-system-reduced} becomes
\[\begin{aligned}
2u_1 - u_2 + 2u_1 &= 0,\\
-u_1 + 2u_2 + 2u_2 &= 0,
\end{aligned}\]
which simplifies to
\[\begin{aligned}
4u_1 - u_2 &= 0,\\
-u_1 + 4u_2 &= 0.
\end{aligned}\]
Substituting $u_2 = 4u_1$ into the second equation gives $-u_1 + 16u_1 = 0$, so $u_1 = 0$ and hence $u_2 = 0$. The unique solution is $u \equiv 0$.
\end{example}

Next, an example where no (real-valued) solution exists.
\begin{example}[Superlinear growth: no solution exists]\label{ex:no-solution}
Let $f(x, u) = 1 + u^2$. This function is not globally Lipschitz since $|f(x, u) - f(x, v)| = |u^2 - v^2| = |u - v||u + v|$
is unbounded as $|u| + |v| \to \infty$. Additionally, the one-sided condition \eqref{eq:energy-condition} fails since $f(x, \eta)\eta = \eta + \eta^3$ grows faster than $\alpha\eta^2 + C$ for any constants $\alpha, C$ as $\eta \to +\infty$. Thus neither theorem applies.

The system \eqref{eq:ex-system-reduced} becomes
\[\begin{aligned}
2u_1 - u_2 + 1 + u_1^2 &= 0,\\
-u_1 + 2u_2 + 1 + u_2^2 &= 0.
\end{aligned}\]
From the first equation, $u_2 = 2u_1 + 1 + u_1^2$. Substituting into the second equation yields
\[-u_1 + 2(2u_1 + 1 + u_1^2) + 1 + (2u_1 + 1 + u_1^2)^2 = 0.\]
Expanding and simplifying, this becomes
\[u_1^4 + 4u_1^3 + 8u_1^2 + 7u_1 + 4 = 0.\]
This quartic polynomial has no real roots, so no real-valued solution exists.
\end{example}

We now consider a three-dimensional example.
\begin{example}[Three-dimensional example]\label{ex:3D}
Consider the time scales $\mathbb{T}_1=\{0,1,2,3\}$, $\mathbb{T}_2=\{5,7,10\}$, and $\mathbb{T}_3=\{4,6,7\}$. Then $\Omega=\{(1,7,6), (2,7,6)\}$. In this case, substituting the points in $\Omega$ into \eqref{eq:Dirichlet} reduces to the equations
\begin{align} 
u(2,7,6)=\dfrac{34}{9}u(1,7,6)+f((1,7,6),u(1,7,6)) \label{3dcasepart1} \\
\dfrac{34}{9}u(2,7,6) - u(1,7,6) + f((2,7,6),u(2,7,6))=0.\label{3dcasepart2}
\end{align}
Substitution of \eqref{3dcasepart1} into \eqref{3dcasepart2} then yields
\begin{equation} \label{3dcasepart3}
\left(\left(\dfrac{34}{9}\right)^2-1\right) u(1,7,6) + \dfrac{34}{9}f((1,7,6),u(1,7,6)) + f((2,7,6),u(2,7,6))=0.
\end{equation}
Consider $f$ to be a second-order polynomial in $u$, i.e. $f(x,u)=c_0+c_1u+c_2u^2$. In this case, \eqref{3dcasepart3} becomes a polynomial in the variable $u(1,7,6)$. Now varying the coefficients yields different situations. For instance, if $c_0=c_1=0$ and $c_2\neq 0$, we observe four distinct real roots corresponding to four distinct solutions. Complex-valued roots can be obtained with $c_0=1$, $c_2=2$, and $c_1=0$; which would correspond to no solution existing in the function space $\mathscr{D}$.
\end{example}

\section{Conclusion}
\label{sec:conclusion}
We proved existence and uniqueness results for nonlinear elliptic Dirichlet boundary value problems on $n$-dimensional time scale domains.
Under a Lipschitz condition bounded by the first eigenvalue $\lambda_1$, Theorem~\ref{thm:main} provides existence and uniqueness via the contraction mapping theorem. Under a weaker one-sided growth condition, Theorem~\ref{thm:global} provides existence via the Leray--Schauder fixed point theorem. Both results require the spectral theory developed in Sections~\ref{sec:1D}--\ref{sec:nD}, which extends the two-dimensional results of \cite{Huseynov-2009} to $n$ dimensions and establishes completeness of the product eigenfunctions for the $\nabla\Delta$-Laplacian with $\mu_\Delta$-measure. The examples in Section~\ref{sec:examples} demonstrate the scope and sharpness of the hypotheses across discrete, hybrid, and higher-dimensional settings, and how the examples relate to the theorems appears in Table~\ref{tab:examples}.

A principal limitation of the present work is the restriction to rectangular domains $\Omega = \displaystyle\prod_{i=1}^n (a_i, b_i) \cap \mathbb{T}_i$, which arises from the use of separation of variables in the spectral theory. Extending to general domains would require constructing the $n$-dimensional Green's function for the $\nabla\Delta$ operator by variational methods. The natural approach via the Lax--Milgram theorem encounters a fundamental obstruction: integration by parts applied to the mixed operator $\nabla_i\Delta_i$ produces the bilinear form
\[a(u,v) = \sum_{i=1}^n \int_\Omega u^{\nabla_i}(y)\,v^{\nabla_i}(\sigma_i(y))\, \Delta^n y,\]
which evaluates $u^{\nabla_i}$ and $v^{\nabla_i}$ at different points whenever $y_i$ is right-scattered. This measure mismatch prevents coercivity, since $a(v,v)$ involves products of the form $v^{\nabla_i}(y)v^{\nabla_i}(\sigma_i(y))$ rather than $|v^{\nabla_i}(y)|^2$, and the integrand need not be non-negative. This obstruction is absent in the continuous case $\mathbb{T}_i = \mathbb{R}$ (where $\sigma_i$ is the identity) and in the purely discrete case $\mathbb{T}_i = \mathbb{Z}$ (where the delta and nabla measures coincide), but is present for general time scales with right-scattered points.

Several directions for future work present themselves. The Banach--Ne\v{c}as--Babu\v{s}ka (inf-sup) framework, which replaces coercivity with the weaker inf-sup condition, may provide a path to well-posedness for the bilinear form $a(u,v)$ on general time scales. Alternatively, following \cite{Stehlik-Volek-2015,Slavik-Stehlik-Volek-2019}, one could combine local existence via Picard--Lindel\"of methods with maximum principles for a priori bounds, bypassing variational methods entirely; this approach has proved effective for lattice differential equations, which combine continuous time with discrete spatial structure. Within the spectral framework of the present paper, one may pursue multiplicity results when $L \geq \lambda_1$ using topological methods, Sturm--Liouville or Robin boundary conditions, or singular nonlinearities treated via cutoff and approximation techniques.

\section*{Data availability}
No data was used for the research described in this article.

\section*{Declaration of competing interest}
The authors state that there is no conflict of interest.

\section*{Acknowledgements}
The authors would like to thank Petr Stehl\'{i}k for useful insights that improved the manuscript. Tom Cuchta is supported by National Science Foundation grant \#2532829. Shalmali Bandyopadhyay is partially supported by an AMS-Simons Travel Grant.

\bibliographystyle{plain}
\bibliography{references} 
\end{document}